\documentclass[12pt]{article}
\usepackage{amssymb}
\usepackage{mathrsfs}
\usepackage{amsfonts}
\usepackage{graphicx}
\usepackage{mathptmx}
\usepackage{colortbl}
\usepackage{latexsym,amsmath,amssymb,amsfonts,amsthm}

\newtheorem{theorem}{Theorem}[section]
\newtheorem{lemma}[theorem]{Lemma}

\newtheorem{corollary}[theorem]{Corollary}
\newtheorem{proposition}[theorem]{Proposition}
\newtheorem{remark}[theorem]{Remark}

\newtheorem{defn}[theorem]{Definition}

\setlength{\textwidth}{165mm} \setlength{\textheight}{230mm}
\setlength{\oddsidemargin}{0mm} \setlength{\topmargin}{-.3in}
\pagestyle{myheadings}

\begin{document}
\setcounter{page}{1}
\title{Extremal Domains on Hadamard manifolds}
\author{Jos\'{e} M. Espinar$^{\dag}$ \quad and \quad Jing Mao$^{\dag,\sharp}$}
\date{}
\protect\footnotetext{\!\!\!\!\!\!\!\!\!\!\!\!{  MSC 2010: 35Nxx;
53Cxx. }
\\
{ Key Words: The moving plane method; Overdetermined Problems; Maximum principle; Neumann conditions, Hyperbolic Space. } }
\maketitle ~~~\\[-15mm]

\begin{center}
{\footnotesize $^{\dag}$Instituto Nacional de Matem\'{a}tica Pura e Aplicada, 110 Estrada Dona Castorina, Rio de Janeiro, 22460-320, Brazil \\
Email: jespinar@impa.br\\
 $^{\sharp}$Department of Mathematics, Harbin Institute of Technology at Weihai, Weihai, 264209, China \\
Email: jiner120@163.com, jiner120@tom.com}
\end{center}


\begin{abstract}
We investigate the geometry and topology of extremal domains in a manifold with negative sectional curvature. An extremal domain is a domain that supports a positive solution to an overdetermined elliptic problem (OEP for short). We consider two types of OEPs.

First, we study narrow properties of such domains in a Hadamard manifold and characterize the boundary at infinity. We give an upper bound for the Hausdorff dimension of its boundary at infinity and how the domain behaves at infinity. This shows interesting relations with the Singular Yamabe Problem.

Later, we focus on extremal domains in the Hyperbolic Space $\mathbb H ^n$. Symmetry and boundedness properties will be shown. In certain sense, we extend Levitt-Rosenberg's Theorem \cite{LR} to OEPs, which suggests a strong relation with constant mean curvature hypersurfaces in $\mathbb H ^n$. In particular, we are able to prove the Berestycki-Caffarelli-Nirenberg Conjecture under certain assumptions either on the boundary at infinity of the extremal domain or on the OEP itself.

Also a height estimate for solutions on extremal domains in a Hyperbolic Space will be given.
 \end{abstract}

\markright{\sl\hfill  J.M. Espinar and J. Mao  \hfill}

\section{Introduction}
\renewcommand{\thesection}{\arabic{section}}
\renewcommand{\theequation}{\thesection.\arabic{equation}}
\setcounter{equation}{0} \setcounter{maintheorem}{0}

Alexandrov \cite{a} introduced \emph{the moving plane method} and used it to prove a very classical result in the theory of constant mean curvature (CMC for short) hypersurfaces:  {\it the only compact CMC hypersurfaces embedded in the Euclidean $n$-space $\mathbb{R}^{n}$ are spheres}. By also applying the moving plane method and meanwhile
improving the boundary point maximum principle to a more delicate version (cf. \cite[Lemma 1]{s}), Serrin \cite{s} proved that if the OEP
\begin{eqnarray} \label{1.1}
\left\{
\begin{array}{llll}
\Delta{u}=-1  \quad&\mathrm{in}\quad ~~\Omega,\\
u>0  \quad&\mathrm{in}\quad ~~\Omega,\\
u=0  \quad&\mathrm{on}\quad \partial\Omega,\\
\langle\nabla{u},\vec{v}\rangle_{\mathbb{R}^{n}}=\alpha
\quad&\mathrm{on}\quad
\partial\Omega,
\end{array}
\right.
\end{eqnarray}
has a solution $u\in{C}^{2}(\overline{\Omega})$, then $\Omega$ must be a ball, where $\Omega$ is a bounded open connected domain in $\mathbb{R}^{n}$, $\vec{v}$ the unit outward normal vector of the boundary $\partial\Omega$, $\langle\cdot \cdot\rangle_{\mathbb{R}^{n}}$ the inner product in $\mathbb{R}^{n}$, and $\alpha$ a non-positive constant. This result of Serrin is of great importance, since it made the moving plane method available to a large part of the mathematical community. If the constant $-1$ in the first equation of the above OEP (\ref{1.1}) is replaced by a function $f$ with  Lipschitz regularity, Pucci and Serrin \cite{ps} can also get the symmetry result, i.e., the domain $\Omega$ must be a ball in $\mathbb{R}^{n}$ also. The OEP has wide applications in physics, which can be used to describe some physical phenomenons. For
instance, if the constant $-1$ in (\ref{1.1}) is replaced by some constant $-k$ depending on the viscosity and the density of a viscous incompressible fluid moving in straight parallel streamlines through a straight pipe of given cross sectional form $\Omega$, and
moreover, if we set up rectangular coordinates $(x,y,z)$ with the $z$-axis directed along the pipe, then the velocity $u$ of this flow satisfies the equation
\begin{eqnarray*}
\Delta{u}+k=0
\end{eqnarray*}
with the boundary condition $u=0$ on $\partial\Omega$. Applying Serrin's result, we can claim that the tangential stress per unit area on the pipe wall, which is represented by $\mu(\nabla{u},\vec{v})$ where $\mu$ is the viscosity, is the same at all points of the wall if and only if it has a circular cross section. Besides, in the linear theory of torsion of a solid straight bar of cross section $\Omega$, and also in the Signorini problem, the OEP introduced above is related to the physical models therein (see, e.g., \cite{jf,iss} for the details).

We know that if one imposes suitable conditions on the separation interface of the variational structure, overdetermined boundary conditions naturally appears in free boundary problems (see, for instance, \cite{ac}). In this process, several methods based on blow-up techniques applied to the intersection of $\Omega$ with a small ball centered at a point of $\partial\Omega$ were used to locally study the regularity of solutions of free boundary problems. This leads to the study of an elliptic equation in an unbounded
domain. In this situation, Berestycki, Caffarelli and Nirenberg \cite{bcn} considered the following OEP
\begin{eqnarray} \label{1.2}
\left\{
\begin{array}{llll}
\Delta{u}+f(u)=0  \quad&\mathrm{in}\quad ~~\Omega,\\
u>0  \quad&\mathrm{in}\quad ~~\Omega,\\
u=0  \quad&\mathrm{on}\quad \partial\Omega,\\
\langle\nabla{u},\vec{v}\rangle_{\mathbb{R}^{n}}=\alpha
\quad&\mathrm{on}\quad
\partial\Omega,
\end{array}
\right.
\end{eqnarray}
where $\Omega\subset\mathbb{R}^{n}$ is an unbounded open connected domain, $f$ is a Lipschitz function. They proved that if furthermore $\Omega$ is a Lipschitz epigraph with some suitable control at infinity, and the above OEP has a bounded smooth solution, then $\Omega$ is a half-space. Also in this paper, they gave a very nice conjecture as follows.

\begin{quote}
{\bf BCN-Conjecture: } {\it If $f$ is a Lipschitz function on $\mathbb{R}_{+}$, and $\Omega$ is a smooth domain in $\mathbb{R}^{n}$ such that $\mathbb{R}^{n}\backslash\overline{\Omega}$ is connected, then the existence of a bounded solution to OEP (\ref{1.2}) implies that $\Omega$ is either a ball, a half-space, a generalized cylinder $B^{k}\times\mathbb{R}^{n-k}$, where $B^{k}$ is a ball in
$\mathbb{R}^{k}$, or the complement of one of them.}
\end{quote}

BCN-Conjecture has motivated many interesting works. For instance, A. Farina and E. Valdinoci \cite{fv1,fv2,fv3} obtained some natural assumptions to conclude that if $\Omega$ is an epigraph where there exists a solution to OEP (\ref{1.2}) then, under those assumptions, $\Omega$ must be a half-space and $u$ is a function of only one variable. When $f$ is a linear function $f(t)=\lambda{t}$, $t>0$, and $n\geqslant3$, by constructing a periodic perturbation of the straight cylinder $B_{1}^{n}\times\mathbb{R}$, where $B_{1}^{n}$ is the unit ball of $\mathbb{R}^{n}$, that supports a periodic solution to OEP (\ref{1.2}), P. Sicbaldi \cite{sp} successfully gave a counterexample to the BCN-Conjecture in dimension greater than or equal to 3. Although the BCN-Conjecture is invalid for $n\geqslant3$, the $2$-dimensional case is still an open problem. Recently, A. Ros and P. Sicbaldi \cite{rs} have given a partial answer to the BCN-Conjecture in
the case of dimension $2$. More precisely, they proved that if $\Omega$ is contained in a half-plane and $|\nabla{u}|$ is bounded, or if there exists a positive constant $\lambda$ such that $f(t)\geqslant\lambda{t}$ for all $t>0$, then the BCN-Conjecture is true for $n=2$. Besides, A. Ros and P. Sicbaldi \cite{rs} have also shown that some classical results in the theory of CMC hypersurfaces extend to the context of OEPs (see \cite[Theorems 2.2, 2.8 and 2.13]{rs}).

From the above discussion, we know that the OEP is an interesting and important topic, which is worthy of investigating and still has some unsolved problems left.

The purpose of this paper is to study the geometry and the topology of a domain $\Omega\subset{M}$, where $M$ is an $n$-dimensional ($n\geqslant2$) manifold with negative sectional curvature, on which the OEP (\ref{1.3}) or (\ref{1.4}) below can be solved. For convenience, we call such domain $\Omega$ to be the \emph{$f$-extremal domain} of the OEP (\ref{1.3}) or (\ref{1.4}).

In this paper, we first consider the following OEP
\begin{eqnarray} \label{1.3}
\left\{
\begin{array}{llll}
\Delta{u}+f(u)=0  \quad&\mathrm{in}\quad ~~\Omega,\\
u>0  \quad&\mathrm{in}\quad ~~\Omega,\\
u=0  \quad&\mathrm{on}\quad \partial\Omega,\\
\langle\nabla{u},\vec{v}\rangle_{M}=\alpha \quad&\mathrm{on}\quad
\partial\Omega,
\end{array}
\right.
\end{eqnarray}where $\Omega$ is an open connected domain in a complete Hadamard $n$-manifold $(M,g)$ with boundary $\partial\Omega$ of class $C^{2}$, $f$ is a given Lipschitz function, $\langle\cdot,\cdot\rangle_{M}$ is the inner product on $M$ induced by the metric $g$, $\vec{v}$ the unit outward normal vector of the boundary $\partial\Omega$ and $\alpha$ a non-positive constant.

In Section 2, we prove narrow properties for the $f$-extremal domain $\Omega\subset{M}$ of the OEP (\ref{1.3}), provided the function $f$ satisfies a property $\mathbb{P}_{1}$ described in Proposition \ref{proposition1} in Section \ref{section2}.

\begin{quote}
{\bf Theorem \ref{ThBoundary}. } {\it Let $\Omega$ be an open (bounded or unbounded) connected domain of an $n$-dimensional ($n\geqslant2$) Hadamard manifold $M$ whose sectional curvature $K$ of $M$ is pinched as follows
\begin{eqnarray*}
-k_{1}\leqslant{K} \leqslant{-k_{2} }< 0,
\end{eqnarray*}
with $k_{1}$ and $k_{2}$ two nonnegative constants. Assume that one can find a (strictly) positive function
$u\in{C}^{2}(\Omega)$ that solves the equation
\begin{eqnarray*}
\Delta{u}+f(u)=0  \quad\mathrm{in}\quad\Omega,
\end{eqnarray*}
where $f:(0,+\infty)\rightarrow\mathbb{R}$ satisfies the property $\mathbb{P}_{1}$ mentioned in Lemma \ref{proposition1} for some constant $\lambda$ satisfying $\lambda>\frac{(n-1)^{2}k_{1}}{4}$.

Then, there is no conical point $x \in \partial _{\infty} \Omega$ of radius $r > \frac{c_{1}(n,k_1)}{\sqrt{\lambda}}$. In particular, the Hausdorff dimension of $\partial _{\infty} \Omega$ satisfies ${\rm dim}_{\mathcal H}\left( \partial _{\infty} \Omega \right) < n-1  $. Here, $c_{1}(n,k_1)$ is a uniform constant depending only on $n$ and $k_1$.}
\end{quote}

Loosely speaking, $\mathbb{P}_{1}$ means that we can find a solution $v$ to the Dirichlet problem
\begin{eqnarray*}
\left\{
\begin{array}{llll}
\Delta{v}=-\lambda v  \quad&\mathrm{in}\quad ~~B(p,R),\\
v>0  \quad&\mathrm{in}\quad ~~ B(p,R),\\
v=0  \quad&\mathrm{on}\quad \partial B(p,R),\\
\end{array}\right.
\end{eqnarray*}in a ball $B(p,R) \subset M$ of certain radius, bounded uniformly by $\frac{c_{1}(n,k_1)}{\sqrt{\lambda}}$, for any point $p \in M$.

Geometrically speaking, a conical point $x \in \partial _{\infty} \Omega$ of radius $r$ (see Definition \ref{DefConical}) means that $\Omega$ contains a neighborhood at infinity of the set of points at fixed distance $r$ from a complete geodesic $\gamma $ in $M$.  Then, as an immediate consequence we get

\begin{quote}
{\bf Corollary \ref{CorBoundary}. } {\it If $f$ satisfies property $\mathbb{P}_{1}$ mentioned in Lemma \ref{proposition1} for some constant $\lambda$ satisfying $\lambda>\frac{(n-1)^{2}k_{1}}{4}$, then a horoball can not be a $f$-extremal domain in a Hadamard manifold $M$ of sectional curvature $-k_{1}\leqslant{K} \leqslant{-k_{2} }< 0$.}
\end{quote}

In Section 3, we focus on a more general OEP, that is, we consider the OEP
\begin{eqnarray} \label{1.4}
\left\{
\begin{array}{llll}
\sum\limits_{i=1}^{n}a_{i}(u, |\nabla{u}|)\cdot{\partial^{2}_{ii}u}+f(u,|\nabla{u}|)=0  \quad&\mathrm{in}\quad ~~\Omega,\\
u>0  \quad&\mathrm{in}\quad ~~\Omega,\\
u=0  \quad&\mathrm{on}\quad \partial\Omega,\\
\langle\nabla{u},\vec{v}\rangle_{\mathbb{H}^{n}(-k)}=\alpha
\quad&\mathrm{on}\quad
\partial\Omega,
\end{array}
\right.
\end{eqnarray}
where $\Omega$ is an open (bounded or unbounded) connected domain, with boundary $\partial\Omega$ of class $C^{2}$, in the hyperbolic $n$-space $\mathbb{H}^{n}(-k)$ with constant sectional curvature $-k<0$, $\langle\cdot,\cdot\rangle_{\mathbb{H}^{n}(-k)}$ is the inner product on $\mathbb{H}^{n}(-k)$, and $\alpha$, $\vec{v}$ have the same meanings as those in the OEP (\ref{1.2}). Moreover, $a_{i}(u,|\nabla{u}|)$ and $f(u,|\nabla{u}|)$ are \emph{continuously differentiable} functions with respect to variables $u$ and $|\nabla{u}|$, with $|\nabla{u}|$ the norm of the gradient vector $\nabla{u}=(\partial _1 u , \ldots , \partial _n u)$, respectively. Here we have used a convention that for a local
coordinate system $\{x_{i}\}_{1\leqslant{i,j}\leqslant{n}}$ on $\mathbb{H}^{n}(-k)$, $\partial_{i}u$ stands for the partial derivative of $u$ in the $x_{i}$-direction, and then naturally, $\partial_{i}u=\frac{\partial{u}}{\partial{x_{i}}}$ and $\partial ^{2}_{ij}u=\frac{\partial^{2}u}{\partial{x_{i}}\partial{x_{j}}}$ denote the covariant derivatives. Besides, we require that the first PDE in (\ref{1.4}) is uniformly elliptic, that is, there exist positive constants $0<\Lambda_{1}<\Lambda_{2}$ such that
\begin{eqnarray*}
\Lambda_{1}\cdot|\zeta|^{2}\leqslant\sum\limits_{i=1}^{n}a_{i}(u,|\nabla{u}|)\zeta^{2}_{i}\leqslant\Lambda_{2}\cdot|\zeta|^{2},
\end{eqnarray*}
where $\zeta=(\zeta_{1},\cdots,\zeta_{n})\in\mathbb{R}^{n}$.

\begin{remark}  \label{addr}
\rm{We claim that the first PDE in (\ref{1.4}) is well defined, which is equivalently said that the operator $\mathcal {F}u:=\sum_{i=1}^{n}a_{i}(u, |\nabla{u}|)\cdot{\partial^{2}_{ii}u}+f(u,|\nabla{u}|)$ is independent of the choice of the local coordinate system $\{x_{i}\}_{1\leqslant{i,j}\leqslant{n}}$ on $\mathbb{H}^{n}(-k)$.

In fact, set diagonal matrix $A=\left(a_{i}(u,|\nabla{u}|) \delta _{{ij}} \right)_{n\times n}$, $\delta_{ij}$ are the Kronecker symbols, and then we can rewrite $\mathcal {F}u$ as
\begin{eqnarray*}
\mathcal {F}u=\mathrm{Tr}\left(A\nabla^{2}u\right)+f(u,|\nabla{u}|),
\end{eqnarray*}where $\mathrm{Tr}(\cdot)$ denotes the trace of a given matrix, and $\nabla^{2}u$ is the Hessian of $u$.

Clearly, $\mathrm{Tr}\left(A\nabla^{2}u\right)$ is a well defined operator which is independent of the choice of coordinates. Therefore, $\mathcal {F}u$ is globally defined on $\mathbb{H}^{n}(-k)$, and the first PDE in (\ref{1.4}) makes sense.}
\end{remark}

Symmetry and boundedness properties related to the $f$-extremal domain $\Omega$ of the OEP (\ref{1.4}) will be given in Sections \ref{section3}. In certain sense, the Hyperbolic geometry imposes more restrictions to the extremal domain than the Euclidean geometry. We prove:

\begin{quote}
{\bf Theorem \ref{ThLR}.} {\it Assume that $\Omega$ is a connected open domain in $\mathbb{H}^{n}$, with properly embedded $C^{2}$ boundary $\Sigma$, on which the OEP (\ref{1.4}) has a solution $u\in{C}^{2}(\overline{\Omega})$.

Assume that $\partial _{\infty} \Omega \subset E $, where $E$ is an
equator at the boundary at infinity $\mathbb H^{n}_{\infty} =
\mathbb S^{n-1}$. Let $P$ be the unique totally geodesic hyperplane
whose boundary at infinity is $E$, i.e., $\partial _{\infty} P = E$.

It holds:
\begin{itemize}
\item If $\partial _{\infty} \Omega = \O$, then $\Omega$ is a geodesic ball and $u$ is radially symmetric.

\item If $\partial _{\infty} \Omega \neq \O$, then $\Omega$ is invariant by the reflection $\mathscr{R}_{P}$ through $P$ , i.e., $\mathscr{R}_{P} (\Omega) =\Omega$. Moreover, $u$ is invariant under $\mathcal R$, that is, $u(p) = u(\mathcal R (p))$ for all $p \in \Omega$.
\end{itemize}}
\end{quote}

As we pointed out above,  A. Ros and P. Sicbaldi \cite{rs} showed that when the extremal domain is contained in the Euclidean Space $\Omega \subset \mathbb R ^{n}$, there exists a close relation between OEP and properly embedded CMC hypersurfaces in $\mathbb R ^{n}$. They showed analogous results to those for properly embedded CMC hypersurfaces in the Euclidean Space developed by Korevaar-Kusner-Meeks-Solomon \cite{KKMS,KKS,Me}. In the Hyperbolic setting, Theorem \ref{ThLR} could be seen as the extension of Levitt-Rosenberg's Theorem \cite{LR} for OEP.

We mention here two important consequences of Theorem \ref{ThLR}. The first one can be seen as the OEP version of the famous do Carmo-Lawson Theorem \cite{dCL}.

\begin{quote}
{\bf Theorem \ref{theorem3.6-1}.}  {\it Assume that $\Omega$ is a domain in $\mathbb{H}^{n}$, with boundary a $C^2$ properly embedded hypersurface $\Sigma$ and whose asymptotic boundary is a point $x \in \partial _\infty \mathbb H ^n$, on which the OEP (\ref{1.4}) has a solution $u\in{C}^{2}(\overline{\Omega})$.

Then, $\Omega$ is a horoball $D_x (t)$, for some $t\in \mathbb R$ and $u$ is horospherically symmetric.}
\end{quote}

See Definition \ref{Def:HoroSymmetric} for a precise definition of horospherically symmetric. And the OEP version of the Hsiang's Theorem \cite{Hs}.

\begin{quote}
{\bf Theorem \ref{ThDelaunay}. } {\it  Assume that $\Omega$ is a
domain in $\mathbb{H}^{n}$, with boundary a $C^2$ properly embedded
hypersurface $\Sigma$ and whose asymptotic boundary consists in two
distinct points $x, y \in \mathbb S ^{n-1}$, $x\neq y$, on which the
OEP (\ref{1.4}) has a solution $u\in{C}^{2}(\overline{\Omega})$.
Then $\Omega$ is rotationally symmetric with respect to the axis
given by the complete geodesic $\beta $ whose boundary at infinity
is $\{ x,y\}$, i.e., $\beta ^+ = x$ and $\beta ^- = y$. In other
words, $\Omega$ is invariant by the one parameter group of rotations
in $\mathbb H^n$ fixing $\beta$. Moreover, $u$ is axially symmetric
w.r.t. $\beta$.}
\end{quote}

Note that Theorem \ref{ThLR} and Theorem \ref{OneEnd} prove the BCN-conjecture in $\mathbb H ^n$ under assumptions on its boundary at infinity. That is,

\begin{quote}
{\bf Corollary \ref{CorBCNn}.} {\it Assume that $\Omega$ is a domain
in $\mathbb{H}^{n}$, with boundary a $C^2$ properly embedded
hypersurface $\Sigma$ and whose asymptotic boundary consists at most in
one point $x\in \mathbb S ^{n-1}$, on which the OEP (\ref{1.4}) has
a solution $u\in{C}^{2}(\overline{\Omega})$. Then,
\begin{itemize}
\item either $\Omega$ is a geodesic ball and $u$ is radially symmetric,
\item or $\Omega$ is a horoball and $u$ is horospherically symmetric.
\end{itemize}}
\end{quote}

In Section $4$, we prove the BCN-conjecture in dimension $n=2$ under assumptions on the OEP. Specifically:

\begin{quote}
{\bf Theorem \ref{BCNConjecture}.} {\it Let $\Omega \subset \mathbb H^2$ a domain with properly embbeded connected $C^2$ boundary such that $\mathbb H^2 \setminus \overline{\Omega}$ is connected. If there exists a (strictly) positive function $u\in{C}^{2}(\Omega)$ that solves the equation
\begin{eqnarray*}
\left\{
\begin{array}{llll}
\Delta{u}+f(u)=0  \quad&\mathrm{in}\quad ~~\Omega,\\
u>0  \quad&\mathrm{in}\quad ~~\Omega,\\
u=0  \quad&\mathrm{on}\quad \partial\Omega,\\
\langle\nabla{u},\vec{v}\rangle_{\mathbb H ^2}=\alpha \quad&\mathrm{on}\quad
\partial\Omega,
\end{array}
\right.
\end{eqnarray*}where $f:(0,+\infty)\rightarrow\mathbb{R}$ is a Lipschitz function that satisfies the property $\mathbb{P}_{1}$ mentioned in Lemma \ref{proposition1} for some constant $\lambda$ satisfying $\lambda>\frac{1}{4}$, then $\Omega$ must be a geodesic ball and $u$ is radially symmetric.}
\end{quote}

We finish by obtaining a height estimate in Section \ref{section4}.

\begin{remark}
To finish, we would like to point out that, if furthermore $a_{i}$ and $f$ are \emph{analytic}, then the uniqueness of the solution (if exists) to the OEP (\ref{1.4}) can be assured by applying \cite[Theorem 5]{fv3}.
\end{remark}


\section{Narrow properties of $f$-extremal domains}  \label{section2}
\renewcommand{\thesection}{\arabic{section}}
\renewcommand{\theequation}{\thesection.\arabic{equation}}
\setcounter{equation}{0} \setcounter{maintheorem}{0}

We begin this section by proving that a $f$-extremal domain cannot
be too big in $\mathbb H ^n (-k)$ under certain conditions on $f$,
that is, a $f$-extremal domain $\Omega \subset \mathbb H ^n (-k)$
does not contain a ball of radius $R$, $R$ depends only on $k$ and
$n$. We extend this result to Hadamard manifolds. We continue by
studying the boundary at infinity of a $f$-extremal domain in a
Hadamard manifold and how is the behavior of the points at the
boundary at infinity. In particular, we show that a horoball cannot
be a $f$-extremal domain. Finally, we exhibit some interesting
analogies with the singular Yamabe Problem.

\subsection{The Narrow property of $f$-extremal domains}

We first recall some fundamental results on the Dirichlet problem on hyperbolic spaces. Consider the eigenvalue problem in the hyperbolic $n$-space $\mathbb{H}^{n}(-k)$ with constant sectional curvature $-k<0$ given by
\begin{eqnarray}  \label{2.1}
\left\{
\begin{array}{ll}
\Delta{v}+\lambda{v}=0  \quad&\mathrm{in}\quad ~B_{\mathbb{H}^{n}(-k)}(R) ,\\
v=0  \quad&\mathrm{on}\quad
\partial{B}_{\mathbb{H}^{n}(-k)}(R) ,
\end{array}
\right.
\end{eqnarray} where $B_{\mathbb{H}^{n}(-k)}(R)$ is a geodesic ball of radius $R >0$ in $\mathbb{H}^{n}(-k)$. One does not need to specify a center for the geodesic ball since the hyperbolic space is two-points homogeneous, which implies that the first Dirichlet eigenvalue of the Laplacian of two geodesic balls of same radius but different centers are the same.

On the one hand, consider geodesic polar coordinates $(t,\xi)\in[0,+\infty)\times\mathbb{S}^{n-1}$ set up at arbitrary point $p$ of $\mathbb{H}^{n}(-k)$, the Laplace operator $\Delta$ can
be rewritten as
\begin{eqnarray*}
\Delta=\frac{d^{2}}{dt^{2}}+(n-1)\sqrt{k}\coth(\sqrt{k}t)\frac{d}{dt}+\left(\frac{\sqrt{k}}{\sinh(\sqrt{k}t)}\right)^{2}\cdot\Delta_{\mathbb{S}^{n-1}},
\end{eqnarray*}
where $t=d(p,\cdot)$ is the distance to $p$ on $\mathbb{H}^{n}(-k)$
and $\Delta_{\mathbb{S}^{n-1}}$ is the Laplacian on the unit
$(n-1)$-sphere $\mathbb{S}^{n-1}$. By Courant's nodal domain Theorem
(see, e.g., page 19 of \cite{i}), we know that the dimension of the
eigenspace of the first Dirichlet eigenvalue is $1$ and its
eigenfunction is the only eigenfunction which \emph{cannot change
sign} within the specified domain. Based on these two facts and
(\ref{2.1}), we know that the first Dirichlet eigenvalue
$\lambda_{1}(R)$ of \emph{the Laplacian on a geodesic ball of radius
$R$ in $\mathbb{H}^{n}(-k)$} and its eigenfunction $v$ satisfies the
following ODE
\begin{eqnarray}\label{Eq:Radial}
\left\{
\begin{array}{lll}
\frac{d^{2}v}{dt^{2}}+(n-1)\sqrt{k}\coth(\sqrt{k}t)\cdot\frac{dv}{dt}+\lambda_{1}(R)\cdot{v}=0,\\
\\
\frac{dv}{dt}(0)=v(R)=0,
\end{array}
\right.
\end{eqnarray}
which implies that the corresponding eigenfunction $v$ is radial.
There are several interesting estimates for the first Dirichlet eigenvalue $\lambda_{1}(R)$ in $\mathbb{H}^{n}(-k)$ that we would like to mention here.
More precisely, McKean \cite{mc} proved that $\lambda_{1}(R)$
satisfies
\begin{eqnarray*}
\lambda_{1}(R)\geqslant\frac{(n-1)^{2}k}{4} \text{ for all } R>0,
\end{eqnarray*}
and moreover, the asymptotical property
$$\lim\limits_{R\rightarrow+\infty}\lambda_{1}(R)= \frac{(n-1)^2k}{4}$$ holds. Savo \cite{sa} improved McKean's result in the following sense: if
$k=1$, he gave the estimate
 \begin{eqnarray*}
 \frac{(n-1)^{2}}{4}+\frac{\pi}{R^{2}}-\frac{4\pi^{2}}{(n-1)R^{3}}\leqslant\lambda_{1}(R)\leqslant
 \frac{(n-1)^{2}}{4}+\frac{\pi}{R^{2}}+\frac{c}{R^{3}},
\end{eqnarray*}
where $c=\frac{\pi^{2}(n-1)(n+1)}{2}\int\limits_{0}^{\infty}\frac{t^2}{\sinh^{2}t}dt$. Moreover, this estimate can be sharpen if $n=3$. More precisely, if $n=3$, Savo proved that the first Dirichlet eigenvalue $\lambda_{1}(R)$ in $\mathbb{H}^{n}(-k)$ is $\lambda_{1}(R)=k+\frac{\pi^{2}}{R^{2}}$. Recently, Savo's estimates has been generalized by Artamoshin. In fact, Artamoshin \cite{as} gave estimates for the first Dirichlet eigenvalue $\lambda_{1}(R)$ in $\mathbb{H}^{n}(-k)$ as follows: $\frac{k}{4}+\left(\frac{\pi} {2R}\right)^{2}\leqslant\lambda_{1}(R)\leqslant\frac{k}{4}+\left(\frac{\pi}{R}\right)^{2}$ for $n=2$; he can obtain the same estimate as Savo's showed  using a different way for $n=3$; $\lambda_{1}(R)>\frac{(n-1)^{2}k} {4}+\left(\frac{\pi}{R}\right)^{2}$ for $n\geqslant4$. Therefore, according to the facts above and applying the domain monotonicity of eigenvalues (see, e.g., page 17 of \cite{i}),  we know that: \emph{for any number $\frac{(n-1)^{2}k}{4}<\lambda <+\infty$, there exists $R_{\lambda ,n}>0$ such that $\lambda_{1}(R_{\lambda,n})=\lambda$}. In other words, for any constant constant $\lambda>\frac{(n-1)^{2}k}{4}$ there exists a function $v$ such that
\begin{eqnarray} \label{2.2}
\left\{
\begin{array}{lll}
\Delta{v}+\lambda{v}=0  \quad&\mathrm{in}\quad ~B_{\mathbb{H}^{n}(-k)}(p,R_{\lambda,n}),\\
v>0  \quad&\mathrm{in}\quad ~B_{\mathbb{H}^{n}(-k)}(p,R_{\lambda,n}),\\
v=0  \quad&\mathrm{on}\quad
\partial{B}_{\mathbb{H}^{n}(-k)}(p,R_{\lambda,n}),
\end{array}
\right.
\end{eqnarray}
hods on a geodesic ball
$B_{\mathbb{H}^{n}(-k)}(p,R_{\lambda,n})\subset\mathbb{H}^{n}(-k)$,
with center $p\in\mathbb{H}^{n}(-k)$ and radius $R_{\lambda,n}$, and
$\lambda_{1}(R_{\lambda,n})=\lambda$. Clearly, this radius
$R_{\lambda,n}$ depends on $n$ and the chosen number $\lambda$, and
which can always be found.

Now, by (\ref{2.2}) and the maximum principle, we can prove the following \emph{narrow} property for the $f$-extremal domain on $\mathbb{H}^{n}(-k)$.

\begin{lemma}  \label{proposition1}
Assume that $\Omega$ is an open (bounded or unbounded) connected
domain of $\mathbb{H}^{n}(-k)$ ($n\geqslant2$) such that one can
find a (strictly) positive function $u\in{C}^{2}(\Omega)$ that
solves the equation
\begin{eqnarray} \label{2.3}
\Delta{u}+f(u)=0  \quad\mathrm{in}\quad\Omega,
\end{eqnarray}
where $f:(0,+\infty)\rightarrow\mathbb{R}$ satisfies the property

\begin{quote}
{\bf $\mathbb{P}_{1}:$} There exists some positive constant $\lambda>\frac{(n-1)^{2}k}{4}$ such that $f(t)\geqslant\lambda{t}$ for all $t>0$.
\end{quote}

Then, $\Omega$ does not contain any closed geodesic ball of radius $R_{\lambda,n}$, where $R_{\lambda,n}$ is determined in (\ref{2.2}). Moreover, if $u$ satisfies the boundary conditions
\begin{eqnarray} \label{2.4}
u=0 \quad\mathrm{and}\quad
\langle\nabla{u},\vec{v}\rangle_{\mathbb{H}^{n}(-k)}=\alpha
\qquad\mathrm{on}~~\partial\Omega
\end{eqnarray}
for some negative constant $\alpha$, then either the closure
$\overline{\Omega}$ does not contain any closed geodesic ball of
radius $R_{\lambda,n}$ or $\Omega$ is a geodesic ball of radius
$R_{\lambda,n}$. Here, $\vec{v}$ is the outward unit vector along $\partial\Omega$.
\end{lemma}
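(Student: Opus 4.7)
The plan is to compare $u$ against the first Dirichlet eigenfunction $v$ of the Laplacian on the critical ball $B=B_{\mathbb{H}^{n}(-k)}(p,R_{\lambda,n})$ provided by (\ref{2.2}), combining Green's identity with the pointwise hypothesis $f(t)\geq \lambda t$ and the Hopf boundary point lemma. The crucial analytic facts are that $v>0$ in $B$ with $v=0$ on $\partial B$, that $\Delta v=-\lambda v \leq 0$ so that $v$ is superharmonic, and that consequently $\partial_{\vec{\nu}} v<0$ strictly on $\partial B$, where $\vec{\nu}$ denotes the outward unit normal to $\partial B$.

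For the first statement, I would argue by contradiction: suppose $\overline{B}\subset\Omega$, so that $u\in C^{2}(\overline{B})$ and $u>0$ on all of $\overline{B}$. Applying Green's identity and using $v=0$ on $\partial B$, $\Delta u=-f(u)$, $\Delta v=-\lambda v$ gives
\[
\int_{B} v\bigl(f(u)-\lambda u\bigr)\,dV \;=\; \int_{\partial B} u\,\partial_{\vec{\nu}} v\,dS .
\]
The left-hand side is nonnegative because $v>0$ in $B$ and $f(u)\geq \lambda u$ by property $\mathbb{P}_{1}$. The right-hand side, however, is \emph{strictly negative}: $u>0$ on $\partial B$ while $\partial_{\vec{\nu}} v<0$ by Hopf. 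This contradiction proves the first claim.

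For the second statement, assume only $\overline{B}\subset \overline{\Omega}$. Because $\partial\Omega$ is $C^{2}$ and locally separates $\mathbb{H}^{n}(-k)$, any ball contained in $\overline{\Omega}$ must actually lie in $\Omega$ (it can only touch $\partial\Omega$ tangentially); hence $B\subset \Omega$ and $u\geq 0$ on $\partial B$, vanishing exactly at $\partial B\cap\partial\Omega$. The same identity still holds, but now the left-hand side is $\geq 0$ and the right-hand side is $\leq 0$, forcing both to vanish. The vanishing of the boundary integral, together with the strict inequality $\partial_{\vec{\nu}} v<0$, forces $u\equiv 0$ on $\partial B$. Since $u>0$ in $\Omega$, this yields $\partial B \subset \partial\Omega$; connectedness of $\Omega$ and the fact that $B$ is then both open and closed in $\Omega$ force $\Omega=B$, a geodesic ball of radius $R_{\lambda,n}$.

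The only delicate point I anticipate is the rigidity step in part (2): justifying that $\overline{B}\subset\overline{\Omega}$ forces $B\subset\Omega$ with $\partial B$ meeting $\partial \Omega$ only tangentially (so that Green's identity applies and the boundary integrand $u\,\partial_{\vec{\nu}} v$ is meaningful and of a single sign on $\partial B$), and finally concluding $\Omega=B$ from $\partial B\subset \partial\Omega$ by topological connectedness. The remaining ingredients (Green's identity, the sign computation via $\mathbb{P}_{1}$, and the Hopf lemma applied to the eigenfunction $v$) are standard once the first Dirichlet eigenfunction of the critical ball has been identified in (\ref{2.2}).
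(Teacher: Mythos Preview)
Your argument is correct and takes a genuinely different route from the paper's. The paper proves both assertions by a pointwise \emph{sliding} comparison: it rescales the eigenfunction to $v_{\epsilon}=\epsilon v$ (respectively $v_{\ell}=\ell v$) until it touches $u$ from below, and then applies the strong maximum principle to $u-v_{\epsilon}$ at the interior touching point, or the Hopf boundary lemma at the tangential contact point $q\in\partial B\cap\partial\Omega$ once the rescaled Neumann data matches~$\alpha$. Your proof replaces this by the single integral identity
\[
\int_{B} v\bigl(f(u)-\lambda u\bigr)\,dV \;=\; \int_{\partial B} u\,\partial_{\vec{\nu}} v\,dS,
\]
obtained from Green's formula, together with the strict sign $\partial_{\vec{\nu}} v<0$ coming from Hopf's lemma applied to the radial eigenfunction.

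Both approaches are standard once the critical ball and its first eigenfunction have been identified in~(\ref{2.2}); what is worth noting is what each buys. The paper's sliding method is the same mechanism used throughout the article (moving planes, reflected comparison solutions), so it fits the surrounding narrative, and it makes explicit use of the constant Neumann datum~$\alpha$ to stop the scaling in case~(2). Your integral argument is shorter and, interestingly, does \emph{not} use the value of the Neumann datum at all in the rigidity step: once one accepts $u\in C^{1}(\overline{\Omega})$ (implicit in the hypothesis that $\langle\nabla u,\vec v\rangle$ is defined on $\partial\Omega$) and $\partial\Omega\in C^{2}$ (assumed throughout the paper), the vanishing of the boundary integral forces $u\equiv 0$ on $\partial B$ and hence $\partial B\subset\partial\Omega$, from which connectedness gives $\Omega=B$. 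The only caveat is the one you already flag: the passage from $\overline{B}\subset\overline{\Omega}$ to $B\subset\Omega$ relies on $\partial\Omega$ being a $C^{2}$ hypersurface (so it cannot cross the open ball), a hypothesis stated in the paper's standing assumptions on OEP~(\ref{1.3}) rather than in the lemma itself.
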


\begin{proof}
In this proof, unless specified, $\mathbb{H}^{n}$ will denote $\mathbb{H}^{n}(-k)$. Let $u$ be a solution to (\ref{2.3}) with $f$ satisfying the property $\mathbb{P}_{1}$. Suppose that there exists a point $p\in\mathbb{H}^{n}$ such that $\overline{B(p,R_{\lambda,n})}\subseteq\Omega$.


Let $v$ be the solution to (\ref{2.2}) normalized to have $L^{2}$-norm $1$. Since $u>0$ in $\Omega$ and $v$ is bounded in $B(p,R_{\lambda,n})$, it is possible to find a constant $\epsilon>0$ such that the function
\begin{eqnarray*}
v_{\epsilon}:=\epsilon\cdot{v}
\end{eqnarray*}
satisfies the following properties:

(1) $v_{\epsilon}(x)\leqslant{u}(x)$ for any
$x\in\overline{B(p,R_{\lambda,n})}$;

(2) there exists some $x_{0}\in{B}(p,R_{\lambda,n})$ such that
$v_{\epsilon}(x_{0})=u(x_{0})$.
$\\$


Now, we would like to
apply the maximum principle to the function $u-v_{\epsilon}$. In
fact, by the property $\mathbb{P}_{1}$, we have
\begin{eqnarray*}
\Delta(u-v_{\epsilon})=-f(u)+\lambda{v_{\epsilon}}\leqslant-\lambda(u-v_{\epsilon})\leqslant0 \text{ in } \overline{B(p,R_{\lambda,n})},
\end{eqnarray*}
which implies that $u-v_{\epsilon}$ is a super-harmonic function on
$\overline{B(p,R_{\lambda,n})}$. Besides, we have
$(u-v_{\epsilon})(x)\geqslant0$ for any
$x\in\partial{B(p,R_{\lambda,n})}$. Hence, by applying the maximum
principle to $u-v_{\epsilon}$, we know that
$u-v_{\epsilon}$ should attain its minimum $0$ at the boundary
$\partial{B(p,R_{\lambda,n})}$. However, at the interior point
$x_{0}$ we also have $(u-v_{\epsilon})(x_{0})=0$. This is a
contradiction. Therefore, our assumption cannot hold, which means
that $\Omega$ does not contain any closed geodesic ball of radius
$R_{\lambda,n}$. This completes the proof of the first assertion.

We will prove the second assertion by contradiction. Assume that the
second claim is not true. Then there should exist some point
$p\in\mathbb{H}^{n}$ such that
$\overline{B(p,R_{\lambda,n})}\subseteq\overline{\Omega}$, and
moreover, $p$ can be chosen suitably such that the boundary of
$\overline{B(p,R_{\lambda,n})}$ \emph{internally} touches  the
boundary of $\Omega$ at some point $q$. The existence of the point
$q$ can always be assured. If at the beginning one chooses a point $p$ such that $\overline{B(p,R_{\lambda,n})}\cap\partial\Omega=\emptyset$, in this case, one just needs to move $\overline{B(p,R_{\lambda,n})}$ inside
$\Omega$ along a fixed direction gradually such that $\overline{B(p,R_{\lambda,n})}$ tangents internally to $\partial\Omega$ at some point, since $\partial \Omega$ is $C^2$, and then this point is just the point $q$ one wants to find. On the other hand, boundary conditions
(\ref{2.4}) imply that there exists a positive constant $\ell_{0}$
such that the function
\begin{eqnarray*}
v_{\ell_{0}}=\ell_{0}\cdot{v}
\end{eqnarray*}
has the following properties:

(1) $v_{\ell_{0}}(x)<u(x)$ for any $x\in{B(p,R_{\lambda,n})}$;

(2) the Neumann data of $v_{\ell_{0}}$ at the boundary
$\partial{B(p,R_{\lambda,n})}$ are equal to a constant $\beta$ such
that $\alpha<\beta<0$.
 $\\$ Defining a function $v_{\ell}$ as $v_{\ell}:=\ell\cdot{v}$ and
 then increasing the parameter $\ell$ starting from $\ell_{0}$
 gradually, one of the following two situations happens:

 (1) there exists some $x_{0}\in{B(p,R_{\lambda,n})}$ such that
 $v_{\ell}(x_{0})=u(x_{0})$, or

 (2) the Neumann data of $v_{\ell}$ becomes
 $\langle\nabla{v_{\ell}},\vec{v}\rangle_{\mathbb{H}^{n}}=\alpha$, and moreover
 $v_{\ell}(x)<u(x)$ for all $x\in{B(p,R_{\lambda,n})}$.
 $\\$

In case (1), applying the maximum principle to the function $u-v_{\ell}$, it follows that $u\equiv{v_{\ell}}$ and then $\Omega=B(p,R_{\lambda,n})$.

In case (2), we know that
\begin{eqnarray*}
\Delta(u-v_{\ell})=-f(u)+\lambda{v_{\ell}}\leqslant-\lambda(u-v_{\ell})\leqslant0
\, \text{ in } \, \overline{B(p,R_{\lambda,n})},
\end{eqnarray*}
which implies that $u-v_{\ell}$ is a super-harmonic function in
$B(p,R_{\lambda,n})$. Together with the fact that
$(u-v_{\ell})(q)=0$ and
$\langle\nabla(u-v_{\ell}),\vec{v}\rangle_{\mathbb{H}^{n}}=0$ at the point $q\in\partial\Omega\cap\partial{B(p,R_{\lambda,n})}$, we can obtain that $u-v_{\ell}$ vanishes in a neighborhood of $q$ within $\Omega$. This is contradict with the fact that $(u-v_{\ell})(x)>0$ for any interior point $x\in{B(p,R_{\lambda,n})}$. So, in case (2), $\Omega$ can only be a geodesic ball with radius $R_{\lambda,n}$. This completes the proof of the second assertion.
\end{proof}

The conclusion of Lemma \ref{proposition1} can be improved to
Hadamard manifolds (i.e., simply connected Riemannian manifolds with
non-positive sectional curvature) as follows.

\begin{lemma}  \label{proposition2}
Assume that $\Omega$ is an open (bounded or unbounded) connected
domain of an $n$-dimensional ($n\geqslant2$) Hadamard manifold $M$
whose sectional curvature $K$ of $M$ is pinched as follows
\begin{eqnarray*}
- k_{1}\leqslant{K}\leqslant{-k_{2}}\leqslant 0,
\end{eqnarray*}
with $k_{1}$ and $k_{2}$ two non-positive  constants. Assume that
one can find a (strictly) positive function $u\in{C}^{2}(\Omega)$
that solves the equation
\begin{eqnarray*}
\Delta{u}+f(u)=0  \quad\mathrm{in}\quad\Omega,
\end{eqnarray*}
where $f:(0,+\infty)\rightarrow\mathbb{R}$ satisfies the property $\mathbb{P}_{1}$ mentioned in Lemma \ref{proposition1} for some constant $\lambda$ satisfying
$\lambda>\frac{(n-1)^{2}k_{1}}{4}$.

Then, $\Omega$ does not contain any closed geodesic ball of radius
$\frac{c_{1}(n,k_{1})}{\sqrt{\lambda}}$, where $c_{1}(n,k_{1})$, only
depending on $n$ and $k_{1}$, is the first positive zero-point of the function $z(t)$ satisfying the following boundary value problem
\begin{eqnarray*}
\left\{
\begin{array}{lll}
z''(t)+(n-1)\sqrt{k_{1}}\coth(\sqrt{k_{1}}t)z'(t)+ z=0,\\
z'(0)=0,\\
 z(0)=1.
\end{array}
\right.
\end{eqnarray*}
Moreover, if $u$ satisfies the boundary conditions
\begin{eqnarray*}
u=0 \quad\mathrm{and}\quad
\langle\nabla{u},\vec{v}\rangle_{M^{n}}=\alpha
\qquad\mathrm{on}~~\partial\Omega
\end{eqnarray*}
for some negative constant $\alpha$, then either the closure $\overline{\Omega}$ does not contain any closed geodesic ball of radius $\frac{c_{1}(n,k_{1})}{\sqrt{\lambda}}$ or $\Omega$ is isometric to a geodesic ball of radius $\frac{c_{1}(n,k_{1})}{\sqrt{\lambda}}$ in $\mathbb H ^n (- k_1)$ and $u$ is given by \eqref{Eq:Radial}.
\end{lemma}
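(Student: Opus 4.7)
The plan is to imitate the proof of Lemma \ref{proposition1} line by line, but to replace the explicit radial eigenfunction on a hyperbolic ball by a function obtained by transplanting that eigenfunction to $M$ via Laplacian comparison. The sectional curvature lower bound $K\geq -k_1$ is precisely what is needed to control the radial Laplacian on $M$ from above by its value in the model space $\mathbb{H}^n(-k_1)$, and this is the only new ingredient beyond Lemma \ref{proposition1}.

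First I would build the barrier. Let $R_{0}:=c_1(n,k_1)/\sqrt{\lambda}$ and let $V:[0,R_0]\to[0,\infty)$ be the radial profile of the first Dirichlet eigenfunction on the hyperbolic ball $B_{\mathbb{H}^n(-k_1)}(R_0)$ with eigenvalue $\lambda$, whose existence is guaranteed by the discussion preceding Lemma \ref{proposition1} since $\lambda>(n-1)^2k_1/4$; thus $V>0$ on $[0,R_0)$, $V(R_0)=0$, $V'(0)=0$, $V'\leq 0$ on $[0,R_0]$, and
\[V''(r)+(n-1)\sqrt{k_1}\coth(\sqrt{k_1}\,r)V'(r)+\lambda V(r)=0.\]
For any $p\in M$ set $\widetilde V(x):=V(d_M(p,x))$ on $\overline{B_M(p,R_0)}$; this is smooth off $p$ since $M$ is Hadamard. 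By the Laplacian comparison theorem, $\Delta_M d_M(p,\cdot)\leq (n-1)\sqrt{k_1}\coth(\sqrt{k_1}\,r)$ on $B_M(p,R_0)\setminus\{p\}$, and combining with $V'\leq 0$,
\[\Delta_M\widetilde V=V''(r)+V'(r)\Delta_M r\geq V''(r)+(n-1)\sqrt{k_1}\coth(\sqrt{k_1}\,r)V'(r)=-\lambda\widetilde V,\]
so $\widetilde V$ is a positive subsolution of $\Delta+\lambda I$ on the closed ball which vanishes on the boundary.

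With $\widetilde V$ in place of the explicit $v$ of Lemma \ref{proposition1}, the rest of the proof is essentially verbatim. For the first assertion, if $\overline{B_M(p,R_0)}\subset\Omega$, scale $\widetilde V_\epsilon:=\epsilon\widetilde V$ upward until it first touches $u$ from below at an interior point $x_0$; then $f(u)\geq\lambda u$ together with $\Delta_M\widetilde V_\epsilon\geq-\lambda\widetilde V_\epsilon$ gives
\[\Delta_M(u-\widetilde V_\epsilon)\leq -f(u)+\lambda\widetilde V_\epsilon\leq -\lambda(u-\widetilde V_\epsilon)\leq 0,\]
so $u-\widetilde V_\epsilon$ is a non-negative superharmonic function attaining an interior zero at $x_0$ while being strictly positive on $\partial B_M(p,R_0)$, contradicting the strong maximum principle. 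For the second assertion I would repeat the sliding-ball argument at an internal tangent point $q\in\partial\Omega\cap\partial B_M(p,R_0)$: either $\widetilde V_\ell:=\ell\widetilde V$ meets $u$ in the interior first, forcing $u\equiv\widetilde V_\ell$ on the ball by the strong maximum principle, or the Neumann data of $\widetilde V_\ell$ matches $\alpha$ first, in which case Hopf's boundary point lemma applied to the superharmonic function $u-\widetilde V_\ell$ at $q$ forces $u\equiv\widetilde V_\ell$ near $q$, contradicting interior positivity.

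The principal new difficulty, absent from Lemma \ref{proposition1}, is the isometric rigidity in the interior-touching case. The identity $u\equiv\widetilde V_\ell$ on the closed ball propagates equality through every inequality in the chain above; since $V'<0$ on $(0,R_0)$ this forces the pointwise identity $\Delta_M d_M(p,\cdot)=(n-1)\sqrt{k_1}\coth(\sqrt{k_1}\,r)$ on $B_M(p,R_0)\setminus\{p\}$. Since Hessian comparison already bounds each eigenvalue of the second fundamental form of the geodesic sphere from above by $\sqrt{k_1}\coth(\sqrt{k_1}\,r)$, equality of the traces forces equality of each eigenvalue; hence every Jacobi field along every radial geodesic from $p$ of length less than $R_0$ has exactly the norm prescribed by the model space form of curvature $-k_1$, and $\exp_p$ is a radial isometry from $B_{T_pM}(0,R_0)$ onto $B_{\mathbb{H}^n(-k_1)}(R_0)$. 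Combined with the boundary data forcing $\overline\Omega=\overline{B_M(p,R_0)}$ exactly as in Lemma \ref{proposition1}, this yields the desired isometry and identifies $u$ with the radial solution \eqref{Eq:Radial}. Passing from Laplacian-comparison equality to a full isometry of the ball---tracking the Riccati equation for the shape operator of the geodesic spheres---is the most delicate step of the argument.
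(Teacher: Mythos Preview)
Your proof is correct and takes a genuinely different route from the paper's. The paper works with the \emph{actual} first Dirichlet eigenfunction $v$ on a geodesic ball $B_M(p,R_{\lambda,n,p})\subset M$, where the radius $R_{\lambda,n,p}$ is chosen so that $\lambda_1(B_M(p,R_{\lambda,n,p}))=\lambda$; it then appeals to Cheng's eigenvalue comparison theorem to show $R_{\lambda,n,p}\leq c_1(n,k_1)/\sqrt{\lambda}$ uniformly in $p$, and invokes the equality case of Cheng's theorem for the rigidity statement. You instead transplant the radial profile of the model eigenfunction from $\mathbb{H}^n(-k_1)$ directly to $M$ via the distance function and use Laplacian comparison to show it is a \emph{subsolution} of $\Delta+\lambda I$ on the ball of the exact target radius; this bypasses Cheng's theorem entirely and keeps the barrier radial, which is convenient. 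Your rigidity argument---tracing equality back through Laplacian comparison to Hessian comparison and thence to the Jacobi/Riccati equation---is essentially a self-contained reproof of the equality case of Cheng's theorem in this setting, whereas the paper uses it as a black box. Both approaches feed into the identical maximum-principle touching argument from Lemma~\ref{proposition1}; yours is more elementary and more explicit about where each curvature bound enters, while the paper's is shorter if one is willing to cite Cheng.
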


\begin{proof}
For any point $p\in{M}$ and a positive constant $\lambda >-\frac{(n-1)^{2}k_{1}}{4}$, there exists some constant $R_{\lambda,n,p}>0$, depending on $\lambda$, $n$, and the point $p$, such that $\lambda_{1}(B_{M^{n}}(p,R_{\lambda,n,p}))=\lambda$, where $B_{M}(p,R_{\lambda,n,p})$ is the geodesic ball on $M$ with center $p$ and radius $R_{\lambda,n,p}$, and, as before, $\lambda_{1}(\cdot)$ denotes the first Dirichlet eigenvalue of the Laplacian on the corresponding geodesic ball. So, there exists a function $v$ such that
\begin{eqnarray} \label{2.5}
\left\{
\begin{array}{lll}
\Delta{v}+\lambda{v}=0  \quad&\mathrm{in}\quad ~B_{M}(p,R_{\lambda,n,p}),\\
v>0  \quad&\mathrm{in}\quad ~B_{M}(p,R_{\lambda,n,p}),\\
v=0  \quad&\mathrm{on}\quad
\partial{B}_{M}(p,R_{\lambda,n,p}),
\end{array}
\right.
\end{eqnarray}hods. Clearly, $v$ is the eigenfunction of $\lambda_{1}(B_{M}(p,R_{\lambda,n,p}))=\lambda$. Now, based on $v$ which is determined by (\ref{2.5}), we can construct functions $v_{\epsilon}$ and $v_{\ell}$ as in the proof of Lemma \ref{proposition1} on the set $\overline{B_{M}(p,R_{\lambda,n,p})}$. Therefore, similar to the procedure in the proof of Lemma
\ref{proposition1}, by applying the maximum principe to the differences $u-v_{\epsilon}$ and $u-v_{\ell}$, where $u$ is the solution to $\Delta{u}+f(u)$=0, all the conclusions in Lemma \ref{proposition2} can be obtained except
\begin{eqnarray*}
R_{\lambda,n,p}\leqslant \frac{c_{1}(n,k_{1})}{\sqrt{\lambda}}
\end{eqnarray*}and the range for $\lambda$. Now we would like to prove these two remaining claims.

In fact, by Cheng's Eigenvalue Comparison Theorems (cf. \cite{ch1,ch2}), we have for $r_{0}>0$,
\begin{eqnarray} \label{2.6}
\lambda_{1}(V_{n}(k_{2},r_{0}))\leqslant\lambda_{1}(B_{M}(p,r_{0}))\leqslant\lambda_{1}(V_{n}(k_{1},r_{0}))
\end{eqnarray} holds, where $V_{n}(k_{i},r_{0})$ is the geodesic ball of radius $r_0$ in the space $n$-form of constant sectional curvature $k_{i}$ ($i=1,2$). We know that
$$\lambda_{1}(V_{n}(k_{i},r_{0}))\geqslant-\frac{(n-1)^{2}k_{i}}{4} \, \text{ and } \,
 \lim\limits_{r_{0}\rightarrow+\infty}\lambda_{1}(V_{n}(k_{i},r_{0}))=-\frac{(n-1)^{2}k_{i}}{4}. $$

Therefore, letting $r_{0}$ tends to infinity in (\ref{2.6}), one has
$$   \lambda_{1}(M):=\lim\limits_{r_{0}\rightarrow+\infty}\lambda_{1}(B_{M}(p,r_{0})) \leqslant -\frac{(n-1)^{2}k_{1}}{4}, $$and letting $r_{0}$ tends to zero one has
$$ \lim\limits_{r_{0}\rightarrow 0}\lambda_{1}(B_{M}(p,r_{0})) \geqslant \lim\limits_{r_{0}\rightarrow 0} \lambda_{1}(V_{n}(k_{2},r_{0})) = +\infty $$

If $\lambda>-\frac{(n-1)^{2}k_{2}}{4}$, by the \emph{domain monotonicity of eigenvalues}, we have that there exists $R_{1}$ such that
\begin{eqnarray*}
\lambda_{1}(B_{M}(p,R_{1}))\leqslant  \lambda_{1}(V_{n}(k_{1},R_{1}))=\lambda ,
\end{eqnarray*}and hence, by the  \emph{domain monotonicity of eigenvalues} again, for any $p \in M$ there exists  $0 < R_{\lambda,n,p} \leqslant R_{1}$ such that

$$\lambda_{1}(B_{M}(p,R_{\lambda,n,p}))= \lambda ,$$which implies the existence of the solution $v$ to (\ref{2.5}). Also, $R_{\lambda , n , p} = R_1$ if, and only if, $B_{M}(p,R_{1})$ is isometric to $V_{n}(k_{1},R_{1})$ by Cheng's Eigenvalue Comparison Theorem. Hence, the solution $u$ must be given by \eqref{Eq:Radial}.

Moreover, as mentioned before, when we focus on the first Dirichlet eigenvalue, the eigenvalue problem (\ref{2.1}) in the hyperbolic space can be degenerated to an ODE, and this fact is also valid for the Euclidean space and the sphere. Therefore, in the space forms, the first Dirichlet eigenvalue of the Laplacian on a geodesic ball can be computed exactly once the radius is prescribed. In fact, based on this truth, one can easily know that
$\lambda_{1}\left(\frac{c_{1}(n,k_{1})}{\sqrt{\lambda}}\right)=\lambda$
and
$\lambda_{1}\left(\frac{c_{2}(n,k_{2})}{\sqrt{\lambda}}\right)=\lambda$,
with $c_{i}(n,k_{i})$ ($i=1,2$) determined by ODEs of the forms as
the one above in Lemma \ref{proposition2}. Together with the
fact $R_{2}\leqslant{R_{\lambda,n,p}}\leqslant{R_{1}}$ shown above,
we have
 \begin{eqnarray*}
\frac{c_{2}(n,k_{2})}{\sqrt{\lambda}}\leqslant
R_{\lambda,n,p}\leqslant\frac{c_{1}(n,k_{1})}{\sqrt{\lambda}}.
 \end{eqnarray*}
 However, we claim that the radius $R_{\lambda,n,p}$ can be only
 chosen to be $\frac{c_{1}(n,k_{1})}{\sqrt{\lambda}}$. This is
 because, in the case of Hadamard manifolds, $R_{\lambda,n,p}$ also
 depends on the choice of $p$. Here we would like to explain this
 claim using a very interesting example. For instance, we can assume that $M$ is a Hadamard manifold having two subsets $\Gamma_{1}$, $\Gamma_{2}$ such that $M\setminus(\Gamma_{1}\cup\Gamma_{2})\neq{\O}$, $K|_{\Gamma_1}=- k_{1}$, $- k_1\leqslant K|_{M\setminus(\Gamma_{1}\cup\Gamma_{2})}\leqslant{- k_2}$, and $K|_{\Gamma_1}= - k_{2}$. If furthermore the $f$-extremal
 domain $\Omega$ intersects $\Gamma_1$, $\Gamma_2$, and
 $M\setminus(\Gamma_{1}\cup\Gamma_{2})$ simultaneously, then the
 suitable radius we can choose is only
 $\frac{c_{1}(n,k_{1})}{\sqrt{\lambda}}$. Our claim follows. This
 completes the proof of Lemma \ref{proposition2}.
\end{proof}

\begin{remark}
\rm{ In fact, Cheng's eigenvalue comparison theorems have been
improved to more generalized forms for complete manifolds with
\emph{radial} (Ricci or sectional) curvature bounded (cf.
\cite[Theorems 3.6 and 4.4]{fmi}). Even for the nonlinear
$p$-Laplacian
$\Delta_{p}(\cdot)=\mathrm{div}\left(|\nabla(\cdot)|^{p-2}\nabla(\cdot)\right)$
with $1<p<\infty$, which is a natural generalization of the linear
Laplace operator, a Cheng-type eigenvalue comparison result can also
be achieved for complete manifolds with \emph{radial} Ricci
curvature bounded from below (cf. \cite[Theorem 3.2]{m1}). }
\end{remark}

\subsection{Boundary at infinity of a $f$-extremal domain}

The aim now is to study the boundary at infinity of a $f$-extremal domain. However, in order to show the application clearly, we prefer to recall some preliminaries about Hadamard manifolds first. For more details, see for instance \cite{pe}.

Let $M$ be a simply connected Hadamard manifold. It is well known that the cut locus of any point
on $M$ is empty, which implies that for any two points on $M$, there is a unique geodesic joining them. Therefore, the concept of geodesic convexity can be naturally defined for sets on $M$.

Let $v_i$ ($i=1,2$) be two unit vectors in $TM$ and let $\gamma_{v_i}(t)$, $i=1,2$, be two unit-speed geodesics on $M$ satisfying $\gamma'_{v_i}(0)=v_i$. We say that two geodesics $\gamma_{v_1}(t)$ and $\gamma_{v_2}(t)$ are \emph{asymptotic} if there exists a constant $c$ such that the distance $d(\gamma_{v_1}(t),\gamma_{v_2}(t))$ is less than $c$ for all $t\geqslant0$. Similarly, two unit vectors $v_1$ and $v_2$ are asymptotic if the corresponding geodesics $\gamma_{v_1}(t)$, $\gamma_{v_2}(t)$ have this property. It is easy to find that being asymptotic is an equivalence relation on the set of unit-speed geodesics or on the set of unit vectors on $M$. Every element of these equivalence classes is called a point at infinity. Denote by $M_{\infty}$ the set of points at infinity, and denote by $\gamma(+\infty)$ or $v(\infty)$ the equivalence class of the corresponding geodesic $\gamma(t)$ or unit vector $v$.

Assume that the Hadamard manifold $M$ has a sectional curvature bounded from above by a negative constant. Then for two asymptotic geodesics $\gamma_1$ and $\gamma_2$, the distance between the two curves $\gamma_{1}|_{[t_0,+\infty)}$,  $\gamma_{2}|_{[t_0,+\infty)}$ is zero for any $t_0\in\mathbb{R}$. Besides, for any $x, y\in M_{\infty}$, there exists a unique oriented unit speed geodesic $\gamma(t)$ such that $\gamma(+\infty)=x$ and $\gamma(-\infty)=y$, with $\gamma(-\infty)=y$ the corresponding point at infinity when we change the orientation of $\gamma$.

For any point $p\in M$, there exists a bijective correspondence between a set of unit vectors at $p$ and $M_{\infty}$. In fact, for a point $p\in M$ and a point $x\in M_{\infty}$, there exists a unique oriented unit speed geodesic $\gamma$ such that $\gamma(0)=p$ and $\gamma(+\infty)=x$. Equivalently, the unit vector $v$ at the point $p$ is mapped to the point at infinity $v(\infty)$. Therefore, $M_{\infty}$ is bijective to a unit sphere.

Set $M^\ast=M\cup M_{\infty}$. For a point $p\in M$, let $\mathcal {U}$ be an open set in the unit sphere of the tangent space $T_{p}M$. For any $r>0$, define
 \begin{eqnarray*}
 T(\mathcal{U},r):=\{\gamma_{v}(t)\in
 M^{\ast}|v\in\mathcal{U},~r<t\leqslant+\infty\}.
\end{eqnarray*}
Then we can construct a unique topology $\mathscr{T}$ on $M^{\ast}$
as follows: the restriction of $\mathscr{T}$ to $M$,
$\mathscr{T}|_{M}$, is the topology induced by the Riemannian
distance; the sets $T(\mathcal{U},r)$ containing a point $x\in
M_{\infty}$ form a neighborhood basis at $x$. We call such topology
the \emph{cone topology} of $M^\ast$. Clearly, the cone topology
$\mathscr{T}$ satisfies the following properties:

(A1) $\mathscr{T}|_{M}$ coincides with the topology induced by the
Riemannian distance;

(A2) for any $p\in M$ and any homeomorphism
$h:[0,1]\rightarrow[0,+\infty]$, the function $\varphi$, from the
closed unit ball of $T_{p}M$ to $M^\ast$, given by
$\varphi(v)=\exp_{p}(h(\|v\|)v)$ is a homeomorphism. Moreover,
$\varphi$ identifies $M_{\infty}$ with the unit sphere;

(A3) for a point $p\in M$, the mapping $v\rightarrow v(\infty)$ is
a homeomorphism from the unit sphere of $T_{p}M$ onto $M_{\infty}$.

Using the notion of the cone topology one can define the boundary at
infinity of a subset of $M$. In fact, given a subset $A\subseteq M$,
its boundary at infinity is the set $\partial A\cap M_{\infty}$,
where $\partial A$ is the boundary of $A$ w.r.t. the cone topology.
Denote by $\partial_{\infty}A$ the boundary at infinity of $A$,
which implies $\partial_{\infty}A=\partial A\cap M_{\infty}$.

Now, based on the above brief introduction, we can define Busemann functions and then horospheres. Given an unit vector $v $ in $ TM$, let $\gamma_{v}(t)$ be the oriented geodesic on $M$ satisfying $\gamma'_{v}(0)=v$, then the Busemann function $B_{v}:M\rightarrow\mathbb{R}$, associate to $v$, is defined by
\begin{eqnarray*}
B_{v}(p)=\lim\limits_{t\rightarrow+\infty}d(p,\gamma_{v}(t))-t.
\end{eqnarray*}

It is not difficult to see that this function has the following properties (cf. \cite{pe}):

(B1)  $B_{v}$ is a $C^2$ convex function on $M$;

(B2)  the gradient $\nabla B_{v}(p)$ is the unique unit vector $w$
at $p$ such that $v(\infty)=-w(\infty)$;

(B3)  if $w$ is a unit vector such that $v(\infty)=w(\infty)$, then
$B_{v}-B_{w}$ is a constant function on $M$.

Given a unit vector $v$ in $TM$ which is mapped to a point at infinity, say $x$, clearly, $x\in M_{\infty}$. The horospheres based at $x$ are defined to be the \emph{level sets} of the Busemann function $B_{v} $. We denote by $H_{x}(t)$ the horosphere based at $x$ at distance $t$, that is,
$$ H_{x} (t)=\{ p \in M \, : \,\, B_{v}(p)=t ,\, v(+\infty) = x \} . $$

The horospheres at $x$ give a foliation of $M$, and by (B1), we know that each element of this foliation bounds a convex domain in $M$ which is called a horoball. By (B2), the intersection between a geodesic $\gamma$ and a horosphere at $\gamma(+\infty)$ is always orthogonal. By (B3), the horospheres at $x$ do not depend on the choice of $v$.

Denote by $\mathrm{int}(\cdot)$ the interior of a given set of points, we can obtain the following.

\begin{lemma}\label{horosphere}
Assume that $\Omega$ is an open (bounded or unbounded) connected domain of an $n$-dimensional ($n\geqslant2$) Hadamard manifold $M$ whose sectional curvature $K$ of $M$ is pinched as follows
\begin{eqnarray*}
-k_{1}\leqslant{K} \leqslant{-k_{2} }< 0,
\end{eqnarray*}
with $k_{1}$ and $k_{2}$ two positive  constants. Assume that one can find a (strictly) positive function
$u\in{C}^{2}(\Omega)$ that solves the equation
\begin{eqnarray*}
\Delta{u}+f(u)=0  \quad\mathrm{in}\quad\Omega,
\end{eqnarray*}
where $f:(0,+\infty)\rightarrow\mathbb{R}$ satisfies the property $\mathbb{P}_{1}$ mentioned in Lemma \ref{proposition1} for some constant $\lambda$ satisfying $\lambda>\frac{(n-1)^{2}k_{1}}{4}$. Then, $\mathrm{int}(\partial_{\infty}\Omega)={\O}$.
\end{lemma}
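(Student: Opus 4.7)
The plan is a proof by contradiction. Suppose $\mathrm{int}(\partial_{\infty}\Omega) \neq \emptyset$ and fix a non-empty open subset $U \subset \partial_{\infty}\Omega$ of $M_{\infty}$; the goal is to exhibit a closed geodesic ball of radius $R_{\lambda} := c_{1}(n,k_{1})/\sqrt{\lambda}$ inside $\Omega$, contradicting Lemma \ref{proposition2}.

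First I would invoke Lemma \ref{proposition2} to get $d(p,\partial\Omega) \leq R_{\lambda}$ for every $p \in \Omega$, so that $\Omega \subset \mathcal{T}_{R_{\lambda}}(\partial\Omega)$, the closed $R_{\lambda}$-tube around the topological boundary. The next ingredient is the standard fact that on a Hadamard manifold with $K \leq -k_{2} < 0$, Rauch comparison yields exponential divergence of geodesics, so bounded Riemannian distance collapses to zero visual angular distance at infinity; quantitatively, the visual angle from a base point $p_{0}$ between two points $p_{n},q_{n}$ with $d(p_{n},q_{n})$ bounded is of order $e^{-\sqrt{k_{2}}\,d(p_{0},p_{n})} \to 0$. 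This implies $\partial_{\infty}\mathcal{T}_{R}(A) = \partial_{\infty}A$ for every $A \subset M$ and $R > 0$; applied to $A = \partial\Omega$ we obtain $U \subset \partial_{\infty}\Omega \subset \partial_{\infty}(\partial\Omega)$, so the topological boundary of $\Omega$ accumulates to every point of the open set $U$.

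To extract the forbidden ball, I would fix $y \in U$ and a complete geodesic $\gamma$ with $\gamma(+\infty) = y$, and examine the closed metric balls $B_{t} := \overline{B(\gamma(t),R_{\lambda})}$ for large $t$. Their visual angular size at a fixed base point is controlled by $e^{-\sqrt{k_{2}}\,t}$, so for $t$ large $B_{t}$ projects visually into an arbitrarily tight visual subneighborhood of $y$ contained in $U$. The desired contradiction comes from showing $B_{t} \subset \Omega$ for some such $t$: indeed, if not one could produce $q_{t} \in B_{t} \setminus \Omega$ with $q_{t} \to y$ in the cone topology (by the same bounded-distance/vanishing-angle argument as in the second step), which places $y$ in $\partial_{\infty}(M \setminus \Omega)$ as well. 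Combining the openness of $U \subset \partial_{\infty}\Omega$ with the exponential shrinkage of the visual size of $B_{t}$, a pigeonhole/covering argument on the visual sphere then exhibits a direction in $U$ that $\Omega$ fails to accumulate to, contradicting $U \subset \partial_{\infty}\Omega$.

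The main obstacle is precisely this last extraction ``$B_{t} \subset \Omega$'' from the one-sided narrow property: one must genuinely use the strict upper curvature bound $K \leq -k_{2} < 0$ to trade the infinitesimal accumulation hypothesis ``$U$ open in $\partial_{\infty}\Omega$'' for the finite conclusion ``some metric ball of radius $R_{\lambda}$ sits inside $\Omega$''. The exponential contraction of visual angles in strictly negative curvature is exactly what makes this trade-off possible, in the same geometric spirit as the conical-point analysis behind Theorem \ref{ThBoundary}.
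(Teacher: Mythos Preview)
Your proposal has a genuine gap at the extraction step, and you essentially acknowledge this when you call it ``the main obstacle'' and leave the ``pigeonhole/covering argument'' unspecified. Concretely: from $B_t \not\subset \Omega$ for all large $t$ you correctly get $q_t \in B_t \setminus \Omega$ with $q_t \to y$, hence $y \in \partial_\infty(M\setminus\Omega)$. But this does \emph{not} contradict $y \in U \subset \partial_\infty\Omega$: a point of $M_\infty$ can perfectly well lie in the closure at infinity of both $\Omega$ and its complement (think of any domain whose finite boundary accumulates at $y$). Varying $y$ over $U$ does not help without a further idea, and no such idea is supplied. You also never check that $\gamma(t)$ itself lies in $\Omega$ for large $t$, which your ball comparison tacitly needs.

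The paper's proof bypasses all of this with a single geometric observation: take $x\in\mathrm{int}(\partial_\infty\Omega)$ and use the foliation by horospheres $\{H_x(t)\}_{t\in\mathbb R}$ based at $x$. Since $H_x(t)\to x$ in the cone topology as $t\to+\infty$ and $x$ is interior to $\partial_\infty\Omega$, for $t$ large the horoball $D_x(t)$ lies inside $\Omega$. Now any geodesic $\beta$ with $\beta(+\infty)=x$ and $\beta(0)\in H_x(t)$ satisfies $d(\beta(s),H_x(t))\to+\infty$, so for suitable $s_0$ the closed ball of radius $c_1(n,k_1)/\sqrt{\lambda}$ centered at $\beta(s_0)$ sits inside $D_x(t)\subset\Omega$, contradicting Lemma~\ref{proposition2}. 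The horoball is precisely the object that converts the infinitesimal hypothesis ``open at infinity'' into the finite conclusion ``contains arbitrarily large geodesic balls'', and it does so without any visual-angle or covering machinery.
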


\begin{proof}
Assume that $\mathrm{int}(\partial_{\infty}\Omega)\neq{\O}$ and let $x\in\mathrm{int}(\partial_{\infty}\Omega)\subseteq M_{\infty}$ be an interior point. Consider the foliation by horospheres $H_{x}(t)$ based at $x$.

The sequence of horospheres $\{H_{x}(t)\}_{ t\in \mathbb R }$ converges to $x$ as $t\rightarrow+\infty$. Together with the fact $x \in \mathrm{int}(\partial_{\infty}\Omega)$, there exists some $T$ with $|T|<+\infty$ such that the horosphere $H_{x}(t)$ is completely contained in $\Omega\subseteq  M$ for all $t >T$.

Fix $t> T$. Let $\beta$ be the unique complete geodesic such that $\beta (+\infty)= x$ and $\beta (0) = p \in H _{x} (t)$. It is clear that $\beta(0,+\infty) \subset D_{x} (t)$, where $D_{x}(t)$ denotes the horoball bounded by $H_{x}(t)$, and $d(\beta (s) , H_{x}(t)) \to +\infty$ as $s \to +\infty$.

Thus, there exists $s_{0} >0 $ such that the geodesic ball centered
at $\beta(s_{0})$ of radius $\frac{c_{1}(n,k_1)}{\sqrt{\lambda}}$ is
completely contained in $D_{x}(t)\subset \Omega$, which 
contradicts the conclusion of Lemma \ref{proposition2}. Hence,
$\mathrm{int}(\partial_{\infty}\Omega)={\O}$.
\end{proof}

In fact, we can be more precise about the structure of the boundary at infinity of a $f$-extremal domain. But first, we shall need to introduce some notation. Given $x \in M_{ \infty }$, we define the {\it cone at infinity based on $x$ } of parameters $y\in M_{\infty} \setminus \{x\}$, $r >0 $ and $s \in \mathbb R $ as the set of points
\begin{equation}
\mathcal{C}_{x}(y,r,s)=\{ p \in M \, ; \,\, d(\gamma (\tilde s),p ) \leq r \text{ for all } \tilde s \geq s  \},
\end{equation}where $\gamma $ is the unique complete geodesic joining $x$ and $y$, that is, $\gamma (+\infty) = x $ and $\gamma (-\infty) = y $. With this, we define:

\begin{defn}\label{DefConical}
Let $\Omega \subset M$ be a connected domain such that $\partial _{\infty} \Omega \neq \O $. We say that $x \in \partial_{\infty} \Omega $ is a {\bf conical point of radius $r$} if there exist $y_{0} \in M_{\infty} $ and $s_{0} \in \mathbb R  $ such that $\mathcal C _{x } (y_{0},r ,s_{0}) \subset \Omega$.

Moreover, we say that {\bf $x$ is a horospherical point} if there exists $t \in \mathbb R$ such that $D_{x} (t) \subset \Omega$, here $D_{x}(t)$ is the horoball bounded by the horosphere $H_{x}(t)$.
\end{defn}

Note that a horospherical point is nothing but a conical point of radius {\it infinity}. Hence, now we can state:

\begin{theorem}\label{ThBoundary}
Let $\Omega$ be an open (bounded or unbounded) connected domain of an $n$-dimensional ($n\geqslant2$) Hadamard manifold $M$ whose sectional curvature $K$ of $M$ is pinched as follows
\begin{eqnarray*}
-k_{1}\leqslant{K} \leqslant{- k_{2} }< 0,
\end{eqnarray*}
with $k_{1}$ and $k_{2}$ two positive constants. Assume that one can find a (strictly) positive function
$u\in{C}^{2}(\Omega)$ that solves the equation
\begin{eqnarray*}
\Delta{u}+f(u)=0  \quad\mathrm{in}\quad\Omega,
\end{eqnarray*}
where $f:(0,+\infty)\rightarrow\mathbb{R}$ satisfies the property
$\mathbb{P}_{1}$ mentioned in Lemma \ref{proposition1} for
some constant $\lambda$ satisfying $\lambda>\frac{(n-1)^{2}k_{1}}{4}$.

Then, there is no conical point $x \in \partial _{\infty} \Omega$ of radius $r > \frac{c_{1}(n,k_1)}{\sqrt{\lambda}}$. In particular, the Hausdorff dimension of $\partial _{\infty} \Omega$ satisfies ${\rm dim}_{\mathcal H}\left( \partial _{\infty} \Omega \right) < n-1  $.
\end{theorem}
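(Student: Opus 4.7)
The first assertion I would obtain directly from Lemma \ref{proposition2}. Suppose, for contradiction, that $x \in \partial_{\infty} \Omega$ is a conical point of radius $r > r_{0} := c_{1}(n,k_{1})/\sqrt{\lambda}$. By Definition \ref{DefConical}, there exist $y_{0} \in M_{\infty}$ and $s_{0} \in \mathbb R$ such that the cone $\mathcal C_{x}(y_{0},r,s_{0}) \subset \Omega$; unwinding the definition (as made precise in the informal description of the theorem), this cone contains an $r$-tubular neighborhood of the ray $\gamma([s_{0},+\infty))$, where $\gamma$ is the complete geodesic with $\gamma(+\infty) = x$, $\gamma(-\infty) = y_{0}$. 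Taking any $\tilde s > s_{0}$, the closed geodesic ball $\overline{B(\gamma(\tilde s), r_{0})}$ lies in this tubular neighborhood since $r_{0} < r$, and hence in $\Omega$. This contradicts the first conclusion of Lemma \ref{proposition2}, which rules out closed geodesic balls of radius $r_{0}$ inside $\Omega$.

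For the Hausdorff-dimension statement the plan is to couple the first assertion with a visual-shadow covering argument on $M_{\infty}$. Fix a basepoint $p_{0} \in M$ and use property (A3) of the cone topology to identify $M_{\infty}$ with the unit sphere $S \subset T_{p_{0}}M$, equipped with the round metric of $\mathbb S^{n-1}$. For each $x \in \partial_{\infty} \Omega$ consider the geodesic ray $\gamma_{x}$ from $p_{0}$ to $x$: by the first assertion, no $r$-tubular neighborhood of a sub-ray of $\gamma_{x}$ can lie in $\Omega$ for any $r > r_{0}$, so at every depth $t>0$ one can pick a point $q_{x,t} \in M \setminus \Omega$ within distance $r_{0}$ of $\gamma_{x}(t)$. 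The upper curvature bound $K \leq -k_{2} < 0$, through standard Rauch/Toponogov comparison with $\mathbb H^{n}(-k_{2})$, then forces the visual shadow from $p_{0}$ of the ball $B(q_{x,t}, r_{0})$ to have diameter in $S$ of order $e^{-t\sqrt{k_{2}}}$. Taking the family of such shadows as $x$ ranges over $\partial_{\infty} \Omega$ and $t$ ranges over an integer sequence, extracting an efficient subcover via a Vitali-type selection on $S$, and summing the resulting geometric series in the covering-number definition of Hausdorff dimension gives a bound of the form ${\rm dim}_{\mathcal H}(\partial_{\infty} \Omega) \leq n-1 - \delta$ for some explicit $\delta = \delta(n, k_{2}, r_{0}) > 0$, which yields the strict inequality.

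The straightforward step is the first assertion, which is essentially a repackaging of Lemma \ref{proposition2}. The main obstacle is the dimensional bound: converting the qualitative absence of large conical points into a quantitative estimate requires careful control of the exponential decay of visual shadows (using both curvature bounds) and a subcover argument on $S$ of Vitali/Besicovitch type, in the spirit of the classical treatment of conical limit sets of Kleinian groups. Only the strict inequality, not the optimal $\delta$, is needed here, which relaxes the bookkeeping but still requires that the shadow diameters shrink uniformly in $t$, a fact provided by the pinching $K \leq -k_{2} < 0$ (and is why the $k_{2}>0$ hypothesis in Theorem \ref{ThBoundary} is strict, in contrast to Lemma \ref{proposition2}).
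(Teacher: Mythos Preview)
Your argument for the first assertion coincides with the paper's: both place a closed geodesic ball of radius $r_{0}=c_{1}(n,k_{1})/\sqrt{\lambda}$ inside the cone $\mathcal C_{x}(y_{0},r,s_{0})\subset\Omega$ and invoke Lemma~\ref{proposition2} to reach a contradiction.

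For the Hausdorff--dimension statement the paper does something much shorter and entirely different from your shadow--covering programme. It simply argues: if $\dim_{\mathcal H}(\partial_{\infty}\Omega)$ were equal to $n-1$ then $\partial_{\infty}\Omega$ would contain an open subset of $\mathbb S^{n-1}$, hence a horospherical point, contradicting Lemma~\ref{horosphere}. No covers, no visual shadows, no Vitali selection---just a one--line reduction to the earlier lemma.

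Your alternative route has a genuine gap. You correctly note (this really comes from Lemma~\ref{proposition2} rather than from the first assertion) that for every $x\in\partial_{\infty}\Omega$ and every $t>0$ there is a point $q_{x,t}\in M\setminus\Omega$ with $d(q_{x,t},\gamma_{x}(t))\le r_{0}$, and that its visual shadow from $p_{0}$ has diameter of order $e^{-t\sqrt{k_{2}}}$. But a Vitali--type selection at that scale bounds the number of disjoint shadows only by a constant times $e^{t\sqrt{k_{2}}(n-1)}$, so the $s$--dimensional Hausdorff sum is controlled only for $s\ge n-1$, never for $s<n-1$. Nothing in your outline exploits the fact that the $q_{x,t}$ lie outside $\Omega$ to buy an extra decay factor: the balls $B(q_{x,t},r_{0})$ are not claimed to be disjoint, and $M\setminus\Omega$ carries no packing constraint of its own. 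Without such an additional input the method yields at best $\dim_{\mathcal H}(\partial_{\infty}\Omega)\le n-1$, which is vacuous. (In fairness, the paper's one--line reduction is also delicate: the implication ``$\dim_{\mathcal H}=n-1\Rightarrow$ nonempty interior'' is false for general subsets of $\mathbb S^{n-1}$, so the strict inequality may in fact require more argument than either approach currently provides.)
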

\begin{proof}
By contradiction, suppose there exists $x \in \partial _{\infty} \Omega$ a conical point of radius $r > \frac{c_{1}(n,k_1)}{\sqrt{\lambda}}$. Then, there exist $y_{0} \in  M_{\infty} \setminus \{x\} $ and $s_{0} \in \mathbb R$ such that $\mathcal C _{x}(y_{0},r ,s_{0})\subseteq \Omega$.

Then, for all $s > s_{0 } + \frac{c_{1}(n,k_1)}{\sqrt{\lambda}}$ the ball centered at $\beta (s)$ of radius $\frac{c_{1}(n,k_1)}{\sqrt{\lambda}}$ is contained in $\mathcal C _{x } (y_{0},r,s_{0})$, which contradicts Lemma \ref{proposition2}.

Now, if the Hausdorff dimension $\partial _{\infty} \Omega$ were $n-1$ then $\partial _{\infty} \Omega$ would contain an open set, and therefore $\partial _{\infty} \Omega$ would contain a horospherical point. This contradicts Lemma \ref{horosphere}.
\end{proof}

As an immediate consequence we get

\begin{corollary}\label{CorBoundary}
If $f$ satisfies property $\mathbb{P}_{1}$ mentioned in Lemma \ref{proposition1} for some constant $\lambda$ satisfying $\lambda>\frac{(n-1)^{2}k_{1}}{4}$, then a horoball can not be a $f$-extremal domain in a Hadamard manifold $M$ of sectional curvature $-k_{1}\leqslant{K} \leqslant{-k_{2} }< 0$.
\end{corollary}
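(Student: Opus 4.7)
The plan is a short proof by contradiction invoking the first assertion of Lemma \ref{proposition2}: if $\Omega = D_{x}(t)$ were a horoball supporting a positive solution to (\ref{1.3}), then $\Omega$ could not contain any closed geodesic ball of radius $\frac{c_{1}(n,k_{1})}{\sqrt{\lambda}}$. I will show to the contrary that a horoball contains closed geodesic balls of arbitrarily large radius, a direct consequence of how the Busemann function decays along the geodesic converging to $x$.

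Concretely, I would fix a unit vector $v \in TM$ with $v(\infty)=x$, normalized so that $H_{x}(t)=\{B_{v}=t\}$ and $D_{x}(t)=\{B_{v}<t\}$, and let $\gamma$ be the unit-speed geodesic with $\gamma(+\infty)=x$ and $\gamma(0)\in H_{x}(t)$. By property (B2), $\nabla B_{v}(\gamma(s))$ is the unit vector antipodal to $\gamma'(s)$, hence $\frac{d}{ds}B_{v}(\gamma(s))=-1$ and $B_{v}(\gamma(s))=t-s$. Since Busemann functions are $1$-Lipschitz (being limits of distance functions), every $q$ with $d(q,\gamma(s)) \leq r$ satisfies
\[
B_{v}(q) \;\leq\; B_{v}(\gamma(s)) + r \;=\; t-s+r.
\]

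Finally I would set $r := \frac{c_{1}(n,k_{1})}{\sqrt{\lambda}} + 1$ and pick any $s > r$; then $B_{v}(q) < t$ for every $q \in \overline{B(\gamma(s),r)}$, so this closed geodesic ball is contained in $D_{x}(t) = \Omega$ and has radius strictly larger than $\frac{c_{1}(n,k_{1})}{\sqrt{\lambda}}$, directly contradicting Lemma \ref{proposition2}. I anticipate no serious obstacle: the entire content is that a horoball is a ``tube at infinity'' which swallows balls of every radius, and the Busemann estimate above makes this precise. Alternatively, the same computation shows that the single point $x = \partial_{\infty}\Omega$ is a conical point of every radius in the sense of Definition \ref{DefConical}, so the corollary also follows directly from Theorem \ref{ThBoundary}.
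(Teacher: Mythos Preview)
Your proof is correct and follows essentially the same approach as the paper. The paper states Corollary \ref{CorBoundary} as an immediate consequence of Theorem \ref{ThBoundary} without further argument, but the underlying mechanism---already spelled out in the proof of Lemma \ref{horosphere}---is exactly what you wrote: one moves along the geodesic $\gamma$ toward $x$ so that $d(\gamma(s),H_{x}(t))\to+\infty$, and hence the horoball contains geodesic balls of arbitrarily large radius, contradicting Lemma \ref{proposition2}; your Busemann-function computation just makes this distance estimate explicit, and you also note the alternative route via Theorem \ref{ThBoundary}.
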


\subsection{Concluding remarks}

We would like to close this section by making some analogies of these overdetermined elliptic problems, CMC hypersurfaces in the hyperbolic space and the singular Yamabe Problem.

The geometric idea behind Theorem \ref{ThBoundary} is that {\it the mean convex side of a properly embedded CMC $H$ hypersurface $\Sigma \subset \mathbb H ^{n}$ cannot contain a sphere of the same mean curvature}. Hence, in particular, a properly embedded CMC $H>1$ hypersurface in $\mathbb H ^{n}$ cannot contain a horospherical point at its boundary at infinity.

Also, from the works of Mazeo-Pacard \cite{MP,MP2}, Espinar-G\'{a}lvez-Mira \cite{EGM} and Bonini-Espinar-Qing \cite{BEQ}, there exists a close relation between complete conformal metrics on subdomains of the sphere of constant positive scalar curvature (singular Yamabe Problem) and CMC-type hypersurfaces in the hyperbolic space. The singular Yamabe Problem is the following:

\begin{quote}
{\it Given a closed set $\Lambda \subset \mathbb S ^{n-1}$, $n\geqslant3$, called the singular set, does there exists a complete conformal metric to the standard metric of the sphere on $\mathbb S ^{n-1}\setminus \Lambda$ of constant positive scalar curvature?}
\end{quote}

One interesting result about the singular Yamabe problem is the following
\begin{quote}
{\bf Schoen-Yau Theorem \cite{SY}: } {\it The singular set $\Lambda \subset \mathbb S ^{n-1}$, $n\geqslant3 $, has Hausdorff dimension less or equals than $\frac{n-3}{2}$.}
\end{quote}

Hence, Theorem \ref{ThBoundary} and Schoen-Yau Theorem motivate us to conjecture:
\begin{quote}
{\bf Conjecture A: } {\it Let $\Omega$ be an open (bounded or unbounded) connected smooth domain of an $n$-dimensional ($n\geqslant3$) Hadamard manifold $M$ whose sectional curvature $K$ of $M$ is pinched as follows
\begin{eqnarray*}
-k_{1}\leqslant{K} \leqslant{-k_{2} }< 0,
\end{eqnarray*}
with $k_{1}$ and $k_{2}$ two positive constants. Assume that one can find a (strictly) positive function
$u\in{C}^{2}(\Omega)$ that solves the equation
\begin{eqnarray*}
\Delta{u}+f(u)=0  \quad\mathrm{in}\quad\Omega,
\end{eqnarray*}
where $f:(0,+\infty)\rightarrow\mathbb{R}$ satisfies the property
$\mathbb{P}_{1}$ mentioned in Lemma \ref{proposition1} for
some constant $\lambda$ satisfying
$\lambda>-\frac{(n-1)^{2}k_{1}}{4}$.

Then, the Hausdorff dimension of $\partial _{\infty} \Omega$ must be less or equal than $\frac{n-3}{2}$.}
\end{quote}

In dimension $n=2$, it must be possible to construct solutions to \eqref{1.3} in the set of points to a fixed distance from a complete geodesic in $\mathbb H ^{2}$. This set has two points at infinity. As far as we know, these examples are not explicitly known, nevertheless we think that following the works of P. Sicbaldi \cite{DeSi,sp} it would be possible to construct them.

A. Ros and P. Sicbaldi \cite{rs} proved narrow properties for $f$-extremal domains in the Euclidean Space based on geometric ideas developed in \cite{egr} for CMC surfaces. We are able to extend these geometric ideas to the context of OEP in Hadamard manifolds. Moreover, the hyperbolic structure of a Hadamard manifold will give information about the boundary at infinity of the $f$-extremal domain. As far as we know, there is no counterpart for this fact on CMC hypersurfaces properly embedded in a Hadamard manifold. That is, the equivalent to Conjecture A for CMC hypersurfaces would be

\begin{quote}
{\bf Conjecture B:} {\it  Let $M^n $, $n\geqslant3$, be a simply-connected Hadamard manifold whose sectional curvature $K$ is pinched as $ -k_{1}\leqslant{K} \leqslant{-k_{2} }< 0$, with $k_{1}$ and $k_{2}$ two positive constants. Let $\Sigma \subset M$ be a properly embedded CMC hypersurface whose mean curvature satisfies $H\equiv C$, where $C$ is a (big positive) constant depending on $k_{1}$, $k_{2}$ and $n$. Let $\Omega $ denote the mean convex side of $\Sigma$ in $M$.

Then, the Hausdorff dimension of $\partial _{\infty} \Omega$ must be less or equal than $\frac{n-3}{2}$.}
\end{quote}

Also, a natural problem to be posed in analogy to the Euclidean case is:

\begin{quote}
{\bf Conjecture C: } {\it Is the Berestycki-Caffarelli-Nirenberg Conjeture true in $\mathbb H^{n}$? That is, if $f$ is a Lipschitz function on $\mathbb{R}_{+}$, and $\Omega$ is a smooth domain in $\mathbb{H}^{n}$ such that $\mathbb{H}^{n}\backslash\overline{\Omega}$ is connected, then the existence of a bounded solution to OEP (\ref{1.3}) implies that $\Omega$ is either a geodesic ball $B^n(R)$ of radius $R$, a half-space determined by either a totally geodesic hyperplane or a equidistant hypersurface to a totally geodesic hyperplane, a generalized cylinder $B^{k}(R)\times\mathbb{H}^{n-k}$, where $B^{k}(R)$ is a ball in
$\mathbb{H}^{k}$ of radius $R$, a periodic perturbation of a generalized cylinder $B^{k}(R)\times\mathbb{H}^{n-k}$, or the complement of one of them.}
\end{quote}

We suspect that Conjecture C is not true for dimensions $n\geqslant3$ without adding the periodic perturbations of a generalized cylinder. One could try to construct examples as P. Sicbaldi \cite{sp} did in the Euclidean case.

We will prove the BCN-conjecture in dimension $n=2$ under certain circumstances (see Theorem \ref{BCNConjecture}).


\section{Symmetry and boundedness properties for the $f$-extremal domain on hyperbolic spaces} \label{section3}
\renewcommand{\thesection}{\arabic{section}}
\renewcommand{\theequation}{\thesection.\arabic{equation}}
\setcounter{equation}{0} \setcounter{maintheorem}{0}

In this section, we would like to investigate some symmetry and boundedness properties of the (bounded or unbounded) domains $\Omega\subseteq\mathbb{H}^{n}(-k)$ on which the OEP (\ref{1.4}) has a solution $u\in{C}^{2}(\overline{\Omega})$ (i.e., $f$-extremal domain). In order to obtain those results, it is better to use the \emph{Poincar\'{e} disk model}. Here, we make an agreement that in the sequel, unless specified, $\mathbb{H}^{n}$ will stand for $\mathbb{H}^{n}(-1)$.

\subsection{An important conclusion}

In this subsection, we give an important result which is the cornerstone of the usage of the moving plane method in the next subsection.

It is clear the equation \eqref{1.4} is invariant under rotations and hyperbolic translations. Invariant means that, if $u$ is a solution to \eqref{1.4} in $\Omega$, and $\mathscr{I} : \mathbb H ^{n } \to \mathbb H ^{n}$ is a rotation or a hyperbolic translation, then $v (p) = u(\mathscr{I}(p))$ is a solution to \eqref{1.4} in $\tilde \Omega = \mathscr{I}^{{-1}}(\Omega)$.

However, in order to obtain symmetry conclusions on the $f$-extremal domain, we must verify that \eqref{1.4} is invariant under reflections of $\mathbb H^{n}$.

Let $P$ be a totally geodesic hyperplane of $\mathbb{H}^{n}$, Then,
$P$ divides $\mathbb{H}^{n}$ into two connected components $P^{+}$
and $P^{-}$, i.e., $\mathbb{H}^{n}\setminus P = P^{+}\cup P^{-}$.
Let $ \mathscr{R}_{P} : \mathbb H ^{n } \to \mathbb H^{n} $ be the
isometry such that $\mathscr{R}_{P} (P^{+}) = P^{-}$,
$\mathscr{R}_{P} (P^{-}) = P^{+}$ and leaves invariant $P$,
$\mathscr{R}_{P} (P) = P$. That is, $\mathscr{R}_{P}$ is the {\bf
reflection through $P$}.

Let $\Omega$ be a (bounded or unbounded) connected  and
$\mathscr{R}_{P}$ be the reflection through $P$ on $\mathbb{H}^{n}$.
We denote by $\Omega_{-}$ the component $\Omega \cap P^{-}$, that we
assume to be nonempty, and denote by $\Omega_{+}$ its reflection
through $P$, i.e., $ \Omega _{+} = \mathscr{R}_{P} ( \Omega _{-}
)$. Define a function $w(p)$ as follows
\begin{eqnarray}  \label{ARF1}
w(p)=u(\mathscr{R}(p)) \text{ for } p \in \Omega _{+}  .
\end{eqnarray}

For the function $w$, we can prove the following.

\begin{lemma} \label{lemmaARF}
The function $w(p)$ defined by (\ref{ARF1}) satisfies the first PDE in the OEP (\ref{1.4}).
\end{lemma}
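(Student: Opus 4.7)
The plan is to exploit that $\mathscr{R}_{P}$ is a Riemannian isometry of $\mathbb{H}^{n}$, so every scalar and tensorial quantity appearing in the first PDE of (\ref{1.4}) pulls back naturally under composition with $\mathscr{R}_{P}$. By Remark \ref{addr}, the operator $\mathcal{F}u = \sum_{i} a_{i}(u,|\nabla u|)\partial^{2}_{ii}u + f(u,|\nabla u|)$ is intrinsically defined via $\mathcal{F}u = \mathrm{Tr}(A \nabla^{2} u) + f(u,|\nabla u|)$, so it is enough to verify that each of the ingredients $u$, $|\nabla u|$ and $\nabla^{2} u$ transforms in a way compatible with this invariance.

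First, I would record the basic change-of-variable identities for $w = u \circ \mathscr{R}_{P}$. Writing $q = \mathscr{R}_{P}(p)$, the differential $d\mathscr{R}_{P}: T_{p}\mathbb{H}^{n} \to T_{q}\mathbb{H}^{n}$ is a linear isometry, and a direct computation using the chain rule and the fact that $\mathscr{R}_{P}$ preserves the Levi-Civita connection yields
\begin{equation*}
w(p) = u(q), \qquad \nabla w(p) = (d\mathscr{R}_{P})^{-1}\bigl(\nabla u(q)\bigr), \qquad |\nabla w(p)| = |\nabla u(q)|,
\end{equation*}
together with the tensorial identity
\begin{equation*}
\nabla^{2} w(p)(X,Y) = \nabla^{2} u(q)\bigl(d\mathscr{R}_{P}X,\, d\mathscr{R}_{P}Y\bigr) \qquad \text{for all } X,Y \in T_{p}\mathbb{H}^{n}.
\end{equation*}

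Second, I would choose at $p$ an orthonormal frame $\{e_{i}\}_{i=1}^{n}$ such that $\{d\mathscr{R}_{P}(e_{i})\}_{i=1}^{n}$ is the orthonormal frame at $q$ used to evaluate $\mathcal{F}u(q)$; this is permitted since $d\mathscr{R}_{P}$ sends orthonormal bases to orthonormal bases. In these frames the diagonal entries of the Hessian match, $\partial^{2}_{ii}w(p) = \partial^{2}_{ii}u(q)$ for each $i$, while the coefficients satisfy $a_{i}(w(p),|\nabla w(p)|) = a_{i}(u(q),|\nabla u(q)|)$ and $f(w(p),|\nabla w(p)|) = f(u(q),|\nabla u(q)|)$ because they depend only on the invariants $u$ and $|\nabla u|$. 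Combining these, $\mathcal{F}w(p) = \mathcal{F}u(q)$. Since $q = \mathscr{R}_{P}(p) \in \Omega_{-}\subset \Omega$ and $u$ solves the first PDE of (\ref{1.4}) throughout $\Omega$, the right-hand side vanishes, giving $\mathcal{F}w \equiv 0$ on $\Omega_{+}$.

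The main subtlety is purely bookkeeping, namely, making precise the frame choice so that the ``diagonal'' form of $\mathcal{F}$ is preserved between $p$ and $q$; this is exactly the coordinate invariance established in Remark \ref{addr}, and once it is invoked the rest of the argument is mechanical. Nothing deeper than the isometry property of $\mathscr{R}_{P}$ and the intrinsic form of $\mathcal{F}$ is needed.
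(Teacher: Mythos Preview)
Your argument is correct and considerably more streamlined than the paper's. You work intrinsically: since $\mathscr{R}_{P}$ is a Riemannian isometry it preserves the Levi-Civita connection, and hence the gradient norm and Hessian of $u$ pull back to those of $w$ via $d\mathscr{R}_{P}$; choosing orthonormal frames related by $d\mathscr{R}_{P}$ then matches the diagonal entries of the Hessian and the scalar arguments of $a_{i}$ and $f$, and Remark~\ref{addr} closes the loop on coordinate independence. The paper instead carries out an explicit coordinate computation in the upper half-space model, writing the reflection as the Euclidean inversion $\mathscr{R}(p)=p/|p|^{2}$, expanding the Euclidean Hessian of $w$ by the chain rule, and then converting between the Euclidean and hyperbolic Hessians via the conformal-change formula; only after several pages of calculation does it arrive at $\mathcal{F}w(p)=\mathcal{F}u(p')$. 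Your approach buys clarity and brevity, and it makes transparent that the conclusion holds for any isometry of $\mathbb{H}^{n}$ (indeed of any Riemannian manifold), not just reflections. The paper's approach has the modest advantage of being entirely self-contained and not invoking the (standard) fact that isometries intertwine Hessians, but it obscures the underlying reason the lemma is true.
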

\begin{proof}
Here, in order to simplify computations, we use the \emph{upper
half-space model} of $\mathbb{H}^{n}$. By the upper half-space
model, $\mathbb{H}^{n}$ can be identified with the upper half-space
\begin{eqnarray*}
\left\{(y_{1},y_{2},\ldots,y_{n-1},y_{n})\in\mathbb{R}^{n}|(y_{1},y_{2},\ldots,y_{n})\in\mathbb{R}^{n-1},y_{n}\in\mathbb{R},y_{n}>0\right\}
\end{eqnarray*}
equipped with the metric
$\widetilde{g}_{-1}=\frac{dy_{1}^{2}+dy_{2}^{2}+\cdots+dy_{n-1}^{2}+dy_{n}^{2}}{y_{n}^{2}}$.
In this model, $\mathscr{R}_{P}$ is given as follows
\begin{eqnarray} \label{ALR}
\mathscr{R}_{P}:(Y,y_{n})\longrightarrow(Y_{0},0)+\frac{t^{2}(Y-Y_{0},y_{n})}{|Y-Y_{0}|^{2}+y_{n}^{2}},
\end{eqnarray}
with $t\in\mathbb{R}$ and $Y:=(y_{1},y_{2},\ldots,y_{n-1})$, which
are Euclidean inversions of the upper half-space w.r.t. the
Euclidean half-sphere with center $(Y_{0},0)$ and radius $t$. Recall
that Euclidean half-spheres centered at $(Y_{0},0)$ and radius $t$
are totally geodesic hyperplanes in the upper half-space model of
$\mathbb H^{n}$.

Without loss of generality, since \eqref{1.4} is invariant under rotations and hyperbolic translations, we can choose $Y_{0}=(0,0,\ldots,0)$ and $t_0=1$ here. Then for any point $p=(y_{1},y_{2},\ldots,y_{n-1},y_{n})=(Y,y_{n})$, we know that $\mathscr{R}$ is given by
\begin{eqnarray} \label{ALR2}
\mathscr{R}(p)=\frac{p}{|p|^2},
\end{eqnarray}
with $|\cdot|$ the Euclidean norm of $\mathbb{R}^{n}$.

Now, in order to show that $w(p)$ verifies the first PDE in (\ref{1.4}) at $p$, we need to calculate its Hessian. Set $e_{i}=(0,\ldots,0,1,0,\ldots,0)\in T_{p}\mathbb{H}^{n}=\mathbb{R}^{n}$, whose $i$-th ($1\leq i\leq n$) element is $1$ while the others are $0$. It is easy to check that $\widetilde{g}_{-1}(e_i,e_j)=\delta_{ij}/y_{n}^{2}$, $1\leq i,j\leq n$, where $\delta_{ij}$ are the Kronecker symbols. This implies that $\{e_1,e_2,\cdots,e_n\}$ is an orthogonal basis of $T_{p}\mathbb{H}^{n}$. By (\ref{ALR2}) we have

\begin{eqnarray*}
\mathscr{R}^{k}(p)=\langle\mathscr{R}(p),e_{k}\rangle_{\mathbb{R}^n}=\frac{\langle
p,e_{k}\rangle_{\mathbb{R}^n}}{|p|^2},
\end{eqnarray*}
and
\begin{eqnarray*}
 \nabla\mathscr{R}^{k}(p)=\frac{e_k}{|p|^2}-\frac{2\langle p,e_{k}\rangle_{\mathbb{R}^n}}{|p|^4}p=d\mathscr{R}_{p}(e_k),
 \end{eqnarray*}
 where $\langle \cdot,\cdot\rangle_{\mathbb{R}^n}$ is the standard Euclidean metric, and $\nabla$, $d$ are the gradient operator and the differential operator on $\mathbb{H}^n$, respectively. For convenience, in this proof, \emph{we would like to rewrite $\langle \cdot,\cdot\rangle_{\mathbb{R}^n}$ as $\langle \cdot,\cdot\rangle$}.

Let
\begin{eqnarray*}
 v_k:=\frac{e_k}{|p|^2}-\frac{2\langle p,e_{k}\rangle}{|p|^4}p,
\end{eqnarray*} for any $1\leq k\leq n$, and $p' = \mathscr{R}(p)$. Clearly, we have
\begin{eqnarray*}
 p'=\mathscr{R}(p)=\frac{p}{|p|^2}, \qquad \langle
 v_k,p\rangle=-\frac{\langle p,e_{k}\rangle}{|p|^2}.
\end{eqnarray*}

\begin{eqnarray} \label{add1}
 \frac{\partial^{2}w}{\partial x_{i}\partial
 x_{j}}(p)&=&\sum\limits_{k}\frac{\partial u}{\partial\mathscr{R}^k} \frac{\partial^{2}\mathscr{R}^k}{\partial x_{i}\partial
 x_{j}}+\sum\limits_{k,l}\frac{\partial^{2}u}{\partial\mathscr{R}^{k}\partial\mathscr{R}^{l}}\frac{\partial\mathscr{R}^{k}}{\partial x_{i}}\frac{\partial\mathscr{R}^{l}}{\partial
 x_{j}}\nonumber\\
 &=&\sum\limits_{k}\frac{\partial u}{\partial\mathscr{R}^k} \frac{\partial^{2}\mathscr{R}^k}{\partial x_{i}\partial
 x_{j}}+\mathrm{Hess}^{0}(u)_{p'}\left(d\mathscr{R}_{p}(e_i),d\mathscr{R}_{p}(e_j)\right),
 \end{eqnarray}
 where $\mathrm{Hess}^{0}$ is the Hessian w.r.t. the standard Euclidean metric of $\mathbb{R}^n$. On the other hand, by the definition of the Hessian operator,
 we have
 \begin{eqnarray}  \label{add2}
 \frac{\partial^{2}\mathscr{R}^k}{\partial x_{i}\partial
 x_{j}}(p)=\mathrm{Hess}^{0}(\mathscr{R}^k)_{p}(e_i,e_j)&=&-\frac{2}{|p|^4}\left(\langle p,e_i\rangle\delta_{jk}+\langle p,e_k\rangle\delta_{ij}
 +\langle p,e_j\rangle\delta_{ik}\right)+\nonumber\\
&&\qquad +\frac{8}{|p|^6}\langle p,e_i\rangle\langle
p,e_j\rangle\langle p,e_k\rangle
 \end{eqnarray}
 and
 \begin{eqnarray}  \label{add3}
 \frac{\partial
 u}{\partial\mathscr{R}^k}=du_{p'}(d\mathscr{R}_{p'}(e_k))=du_{p}(v_k).
 \end{eqnarray}
 Set $\widetilde{v}_k:=|p|^{2}v_{k}=e_{k}-\frac{2\langle
 p,e_{k}\rangle}{|p|^2}p$. Clearly, $\widetilde{v}_k=e_{k}+2\langle p,v_{k}\rangle \, p$, and $\{\widetilde{v}_{1},\widetilde{v}_{2},\ldots,\widetilde{v}_{n}\}$ is an orthonormal base at $p'$ since, for any $1\leq k,l\leq n$, we have
 \begin{eqnarray*}
 \langle\widetilde{v}_{k},\widetilde{v}_{l}\rangle=\langle
 e_{k},e_{l}\rangle-4\frac{\langle
 p,e_{k}\rangle\langle
 p,e_{l}\rangle}{|p|^2}+4\frac{\langle
 p,e_{k}\rangle\langle
 p,e_{l}\rangle}{|p|^4}|p|^2=\delta_{kl}.
 \end{eqnarray*}
 By (\ref{add2}) and (\ref{add3}), we have
 \begin{eqnarray} \label{add4}
 \sum\limits_{k}\frac{\partial^{2}\mathscr{R}^k}{\partial
 x_{i}^{2}}\frac{\partial u}{\partial\mathscr{R}^k}(p)&=&-\frac{2}{|p|^4}\sum\limits_{k}\left(2\langle p,e_i\rangle\delta_{ik}+\langle
 p,e_k\rangle\right)du_{p'}(v_k)+\frac{8}{|p|^6}\langle
 p,e_i\rangle^{2}\sum\limits_{k}\langle
 p,e_k\rangle du_{p'}(v_k) \nonumber\\
 &=&\frac{4}{|p|^2}\langle p,v_i\rangle
 du_{p'}(v_i)+\left(\frac{2}{|p|^6}-\frac{8\langle
 p,v_i\rangle^2}{|p|^4}\right)\alpha,
 \end{eqnarray}
where in the last equality of (\ref{add4}), we have set
$\alpha:=\sum_{k}\langle
 p,\widetilde{v}_k\rangle du_{p'}(\widetilde{v}_k)=\langle\nabla
 u(p'),p\rangle$. Combining (\ref{add1}) and (\ref{add4}), we have
 \begin{eqnarray}  \label{add5}
 \mathrm{Hess}^{0}(w)_{p}(e_i,e_i)=\mathrm{Hess}^{0}(u)_{p'}(v_i,v_i)+\frac{4}{|p|^2}\langle
 p,v_i\rangle du_{p'}(v_i)+\left(\frac{2}{|p|^6}-\frac{8\langle
 p,v_i\rangle^2}{|p|^4}\right)du_{p'}(p).
 \end{eqnarray}
 By the definition of $\widetilde{g}_{-1}$, we have
 $\widetilde{g}_{-1}=\frac{1}{y_{n}^{2}(p)}\langle\cdot,\cdot\rangle$ for any $p\in\mathbb{H}^n$, with $y_{n}(p)=\langle e_{n},p\rangle$.
 Set $\widetilde{g}_{-1}=e^{2\rho}\langle\cdot,\cdot\rangle$.
 Clearly, $\rho=-\log(y_{n})$. So, we have
 \begin{eqnarray*}
 \nabla^{0}\rho(p)=-\frac{e_{n}}{\langle
 p,e_n\rangle}=-\frac{e_n}{y_{n}(p)},
 \end{eqnarray*}
where $\nabla^0$ is the gradient w.r.t. the standard Euclidean metric of $\mathbb{R}^n$.

Note that the hyperbolic metric $\widetilde{g}_{-1}$ is \emph{conformal} to the standard Euclidean metric of $\mathbb{R}^n$ and hence, by direct computation, for any $f\in C^{2}(\mathbb{H}^n)$ and any $X, Y\in\mathscr{X}(\mathbb{H}^n)$, $\mathscr{X}(\mathbb{H}^n)$ the set of smooth vector fields on $\mathbb{H}^n$, we have
\begin{eqnarray*}
\mathrm{Hess}(f)(X,Y)=\mathrm{Hess}^{0}(f)(X,Y)+\langle X,Y\rangle\langle\nabla ^{0}{f},\nabla^{0}\rho\rangle-\langle X,\nabla^{0}\rho\rangle\langle\nabla^{0} f,Y\rangle-\langle\nabla^{0}\rho,Y\rangle\langle\nabla f,X\rangle.
\end{eqnarray*}

Therefore, by applying the above formula, we can directly obtain
\begin{eqnarray} \label{add6}
\mathrm{Hess}(w)_{p}(e_i,e_i)=\mathrm{Hess}^{0}(w)_{p}(e_i,e_i)-\frac{1}{\langle
p,e_n\rangle}\langle\nabla^{0} w(p),e_n\rangle+\frac{2\langle
e_i,e_n\rangle}{\langle p,e_n\rangle}\langle\nabla ^{0} w(p),e_i\rangle,
\end{eqnarray}

and
\begin{eqnarray} \label{add7}
\mathrm{Hess}
(u)_{p'}(v_i,v_i)=\mathrm{Hess}^{0}(u)_{p'}(v_i,v_i)-\frac{1}{|p|^{4}\langle
p,e_n\rangle}\langle\nabla^{0} u(p'),e_n\rangle+\frac{2\langle
v_i,e_n\rangle}{\langle p,e_n\rangle}\langle\nabla^{0} u(p'),v_i\rangle .
\end{eqnarray}
On the one hand, we have
\begin{eqnarray*}
dw_{p}(e_i)=\langle\nabla ^0 w(p),e_i\rangle=du_{p'}(v_i)=\langle\nabla ^0
 u(p'),v_i\rangle,
\end{eqnarray*}
and
\begin{eqnarray} \label{add8}
\nabla ^0 w(p)&=&\sum\limits_{i}dw_{p}(e_i)e_{i}=\sum\limits_{i}du_{p'}(v_i) e_{i}\nonumber\\
&=&\frac{1}{|p|^2}\sum\limits_{i}du_{p'}(\widetilde{v}_i)\left(\widetilde{v}_{i}-2\langle
p,v_{i}\rangle p\right)\nonumber\\
&=&\frac{1}{|p|^2}\nabla ^0
u(p')-\frac{2}{|p|^4}\left(\sum\limits_{i}du_{p'}(\widetilde{v}_{i})\langle
p,\widetilde{v}_{i}\rangle p\right)\nonumber\\
&=&\frac{1}{|p|^2}\nabla ^0 u(p')-\frac{2\langle\nabla ^0
u(p'),p\rangle}{|p|^4}p,
\end{eqnarray}
which implies that $|\nabla u|(p')=|\nabla w|(p)$ in the sense of
the hyperbolic metric $\widetilde{g}_{-1}$.

On the other hand, the gradient of a function $f$ by a conformal
change of metric is given by
\begin{eqnarray} \label{add9}
 \nabla f (p) = e^{2\rho(p)}\nabla ^{0}
f (p).
\end{eqnarray}

By Remark \ref{addr}, we know that the first PDE in (\ref{1.4}) can
be rewritten as $\mathcal{F}u=0$ with
$\mathcal{F}u=\mathrm{Tr}\left(A\nabla^{2}u\right)+f(u,|\nabla{u}|)$,
which is independent of the choice of local coordinates. If we
choose an orthogonal basis $\{e_1,\cdots,e_n\}$ at some point of
$\mathbb{H}^n$, then $\nabla^{2}u(p)$ can be diagonalized, which
implies that in this setting, we have
\begin{eqnarray*}
\mathcal{F}u(p')=\sum\limits_{i=1}^{n}a_{i}(u,|\nabla
u(p')|)\left(\widetilde{g}_{-1}(v_i,v_i)\right)^{-1}\mathrm{Hess}(u)_{p'}(v_i,v_i)+f(u,|\nabla
u(p')|).
\end{eqnarray*}
Similarly, we can get
\begin{eqnarray*}
\mathcal{F}w(p)=\sum\limits_{i=1}^{n}a_{i}(w,|\nabla
w(p)|)\left(\widetilde{g}_{-1}(e_i,e_i)\right)^{-1}\mathrm{Hess}(w)_{p}(e_i,e_i)+f(w,|\nabla
w(p)|).
\end{eqnarray*}
Substituting (\ref{add5})-(\ref{add9}) into the above two
equalities, we can get $\mathcal{F}u(p')=\mathcal{F}w(p)=0$ for any
$p\in\Omega_{+}$. This completes the proof of Lemma \ref{lemmaARF}.
\end{proof}

\begin{remark}
\rm{ Clearly, as a special case of the OEP (\ref{1.4}), the function $w(p)$ defined by (\ref{ARF1}) also satisfies the first PDE in the OEP (\ref{1.3}).}
\end{remark}

\subsection{Symmetry properties of the $f$-extremal domain}

Suppose now $\Omega$ is an open (bounded or unbounded) connected domain in $\mathbb{H}^{n}$ whose boundary is of class $C^{2}$ and on which there exists a solution $u\in{C}^{2}(\overline{\Omega})$ to the OEP (\ref{1.4}).

As we pointed out above, there exists a close relation between OEP and properly embedded CMC hypersurfaces in the hyperbolic space. A. Ros and P. Sicbaldi \cite{rs} showed this when the extremal domain is contained in the Euclidean Space $\Omega \subset \mathbb R ^{n}$. In this case, they showed analogous results to those for properly embedded CMC hypersurfaces in the Euclidean Space developed by Korevaar-Kusner-Meeks-Solomon \cite{KKMS,KKS,Me}.

In the Hyperbolic setting, our aim is to extend Levitt-Rosenberg Theorem \cite{LR} for OEP. In certain sense, the hyperbolic geometry imposes more restrictions to the extremal domain than the Euclidean geometry. Specifically:

\begin{theorem}\label{ThLR}
Assume that $\Omega$ is a connected open domain in $\mathbb{H}^{n}$, with properly embedded $C^{2}$ boundary $\Sigma$, on which the OEP (\ref{1.4}) has a solution $u\in{C}^{2}(\overline{\Omega})$.

Assume that $\partial _{\infty} \Omega \subset E $, where $E$ is an
equator at the boundary at infinity $\mathbb H^{n}_{\infty} =
\mathbb S^{n-1}$. Let $P$ be the unique totally geodesic hyperplane
whose boundary at infinity is $E$, i.e., $\partial _{\infty} P = E$.

It holds:
\begin{itemize}
\item If $\partial _{\infty} \Omega = \O$, then $\Omega$ is a geodesic ball and $u$ is radially symmetric.

\item If $\partial _{\infty} \Omega \neq \O$, then $\Omega$ is invariant by the reflection $\mathscr{R}_{P}$ through $P$ , i.e., $\mathscr{R}_{P} (\Omega) =\Omega$. Moreover, $u$ is invariant under $\mathcal R$, that is, $u(p) = u(\mathcal R (p))$ for all $p \in \Omega$.
\end{itemize}
\end{theorem}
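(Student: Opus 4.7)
My plan is to apply the Alexandrov moving plane method adapted to hyperbolic reflections, using Lemma \ref{lemmaARF} as the essential analytic input: for any totally geodesic hyperplane $Q\subset\mathbb H^n$, the function $w=u\circ\mathscr{R}_Q$ satisfies the same fully nonlinear PDE from \eqref{1.4} as $u$. This ensures that, on the reflected region, $u-w$ satisfies a linear elliptic equation (via the mean value theorem and the $C^1$ dependence of $a_i$ and $f$), to which I can apply the strong maximum principle and Hopf's boundary lemma in the usual Serrin fashion. The two cases require different realizations of the method: in Case 1, $\Omega$ is bounded and I would run the moving plane in every direction; in Case 2, I would run it along the specific one-parameter family of hyperplanes perpendicular to a geodesic normal to $P$, and use the asymptotic boundary hypothesis to pin down the critical parameter.

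\emph{Case 1:} If $\partial_\infty\Omega=\emptyset$, then $\Omega$ is relatively compact in $\mathbb H^n$, hence bounded. For every unit direction $v\in T_p\mathbb H^n$ at a fixed point, I would consider the foliation of $\mathbb H^n$ by totally geodesic hyperplanes perpendicular to the geodesic $\gamma_v$ with $\gamma_v'(0)=v$. Starting with a hyperplane disjoint from $\overline\Omega$ and sliding it inward, the classical Serrin analysis produces a critical hyperplane $H_v$ at which either an interior tangency of the reflected piece with $\partial\Omega$ or a non-transverse boundary contact occurs; comparing $u$ with $u\circ\mathscr{R}_{H_v}$ via the strong maximum principle or Hopf's lemma forces $\Omega$ to be symmetric about $H_v$ and $u$ to be invariant under $\mathscr{R}_{H_v}$. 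Since this holds for every direction $v$, the standard Serrin intersection argument (which transfers from $\mathbb R^n$ to $\mathbb H^n$ since totally geodesic hyperplanes behave analogously in any rank-one symmetric space) yields a common point $p_*\in\mathbb H^n$ through which all $H_v$ pass, from which one concludes $\Omega$ is a geodesic ball centered at $p_*$ with $u$ radially symmetric.

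\emph{Case 2:} Let $\gamma$ be the unique geodesic perpendicular to $P$ at a chosen point, with $\gamma(\pm\infty)$ the two poles of $E=\partial_\infty P$, and let $\{P_t\}_{t\in\mathbb R}$ be the foliation by totally geodesic hyperplanes perpendicular to $\gamma$, so that $P_0=P$ and $P_t^\pm$ is the half-space containing $\gamma(\pm\infty)$. Since $\partial_\infty\Omega\subset E$ and $\gamma(\pm\infty)\notin E$, in the Poincar\'{e} ball compactification $\overline\Omega$ stays away from both $\gamma(\pm\infty)$, so $P_t\cap\overline\Omega=\emptyset$ for $|t|$ sufficiently large; this allows the moving plane to be started from either side. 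Decreasing $t$ from $+\infty$ and running the Serrin comparison as in Case 1, I obtain a critical value $t^*$ at which (if finite) $\Omega$ becomes symmetric about $P_{t^*}$ with $u$ invariant under $\mathscr{R}_{P_{t^*}}$. To show $t^*\leq 0$, I would work in the upper half-space model with $\gamma$ vertical through the origin: each $P_t$ is a Euclidean half-sphere centered at the origin, and the restriction of $\mathscr{R}_{P_t}$ to $\mathbb S^{n-1}$ is Euclidean inversion through the concentric $(n-2)$-sphere $\partial_\infty P_t$; this inversion sends the concentric sphere $E$ to itself only when $t=0$. Therefore symmetry of $\Omega$ about $P_{t^*}$ for $t^*\neq 0$ would force $\emptyset\neq\partial_\infty\Omega\subset E\cap\mathscr{R}_{\partial_\infty P_{t^*}}(E)=\emptyset$, a contradiction. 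Running the analogous argument from $t=-\infty$ yields $\mathscr{R}_P(\Omega\cap P^-)\subset\Omega\cap P^+$, and combining with $\mathscr{R}_P(\Omega\cap P^+)\subset\Omega\cap P^-$ from the first step produces equality $\mathscr{R}_P(\Omega)=\Omega$, with $u$ invariant under $\mathscr{R}_P$.

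The main obstacle I expect is justifying that the moving plane does not stall at infinity before reaching $P_0$; this is the hyperbolic analogue of the ``no escape at infinity'' issue in Alexandrov-type arguments for properly embedded CMC hypersurfaces. It is controlled precisely by the hypothesis $\partial_\infty\Omega\subset E$ through the concentric-sphere observation above, which forbids any symmetry plane other than $P=P_0$. The $C^2$ regularity and proper embedding of $\partial\Omega$ are used to guarantee that, just past the first contact, the reflected piece lies in $\Omega$, so that the Serrin comparison runs on a nondegenerate domain; once that is in hand, Lemma \ref{lemmaARF} is what keeps the maximum principle applicable despite the fully nonlinear form of \eqref{1.4}.
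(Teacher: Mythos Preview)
Your approach coincides with the paper's: moving planes along the foliation $\{P_t\}$ orthogonal to the geodesic through the poles of $E$, invoking Lemma~\ref{lemmaARF} so that $u$ and its reflection satisfy the same PDE, then concluding via the strong maximum principle and Hopf lemma; the observation that a symmetry hyperplane at parameter $t\neq 0$ would force $\partial_\infty\Omega=\emptyset$ is exactly how the paper rules out stopping before $P_0$. Two places where the paper supplies detail your sketch glosses over: (i) when $P_t$ meets $\partial\Omega$ orthogonally the ordinary Hopf lemma does not apply, and the paper carries out a second-order local computation to verify that all first and second derivatives of $u-v_t$ vanish at the corner, then invokes Serrin's boundary-point lemma at a corner; (ii) in the bounded case the paper does not use the ``intersection argument'' you cite---which does not transfer verbatim from $\mathbb R^n$---but instead shows the closed group generated by all the reflections is compact and applies Karcher's Riemannian center of mass to produce the common fixed point.
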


We shall recall before we continue the relation between isometries
of the Hyperbolic Space $\mathbb H ^{n}$ and conformal
diffeomorphism on the sphere at infinity $\mathbb S ^{n-1}$. It is
well-known that an isometry in $\mathbb H ^{n}$ induces a unique
conformal diffeormorphism $\Phi$ in $\mathbb S ^{n-1}$ and
viceversa.

Hence, in the above Theorem \ref{ThLR} we only need to assume that $\partial _{\infty} \Omega \subseteq \partial B_{\mathbb S ^{n-1}}(x, r)$, where $B_{\mathbb S ^{n-1}}(x, r)$ is the geodesic ball in $\mathbb S ^{n-1}$ centered at $x $ of radius $r \in (0,\pi)$. In particular, an equator centered at $x$, $E(x)$, appears when $r = \pi /2$.

If $r \neq \pi /2 $, it is clear that there exists a unique
conformal diffeomorphism such that
$$\Phi (B_{\mathbb S ^{n-1}}(x, r)) = E(x) . $$

This conformal diffeomorphism corresponds to a hyperbolic translation that take $P_{r}$ into $P$. Here, $P_{r}$ and $P$ are the totally geodesic hyperplanes whose boundary at infinity are $\partial B_{\mathbb S ^{n-1}}(x, r)$ and $E(x)$ respectively. Since the OEP \eqref{1.4} is invariant under hyperbolic translations and rotations, then we only need to consider the equator centered at the north pole ${\bf n}\in\mathbb S^{n-1}$ in Theorem \ref{ThLR}.

So, from now on, let $E$ denote the equator centered at the north
pole ${\bf n} \in \mathbb S^{n-1}$ and let $P$ be the totally
geodesic hyperplane whose boundary at infinity is $E$. Let $\gamma :
\mathbb R \to \mathbb H^{n}$ be the complete geodesic (parametrized
by arc-length) joining the south and north poles, ${\bf s},{\bf n}
\in \mathbb S^{n-1}$. Let $P(t)$ be the totally geodesic hyperplane
orthogonal to $\gamma ' (t) $ at $\gamma (t) \in P(t)$ for all $t
\in \mathbb R$. It is clear that $\{P(t)\}_{t\in \mathbb R}$ defines
a foliation of $\mathbb H^{n}$ by totally geodesic hyperplanes such
that $P(0) = P$.

\vspace {3mm}

\begin{proof}[Proof of Theorem \ref{ThLR}.]

Let $\gamma $ be the complete geodesic joining the south and north
poles and $\{ P(t) \}_{ t \in \mathbb R}$ be the foliation by
totally geodesic hyperplanes orthogonal to $\gamma$ given above.

Since $\partial _{\infty}\Omega \subseteq E$, there exists $T <0$
such that $P(t) \cap \Omega = \O$ for all $t \geqslant T$. So, we
can increase $t$ up to the first contact point of $\partial \Omega$
and $P(t)$. Set $t _{1} \leqslant  0$ as this point.

We can assume $t_{1} <0$, otherwise we begin with the foliation coming from $+\infty$ and hence, we must find $t_{2} >0 $ such that $P(t) \cap \Omega = \O$ for all $t > t_{2} $ and $P(t_{2})$ has a first contact point with $\partial \Omega$. If $t_{2} $ were $0$, then $\Omega \subset P$, which is a contradiction. Hence, up to a rotation, we can assume $t_{1 } <0$.

Since $\partial _{\infty} P(t) \cap \partial _{\infty } \Omega = \O$
for all $t \neq 0$, we have that $ \Omega _{t}^{-} := P^{-}(t) \cap
\Omega $ is relatively compact in $M$ for all $t \in ( t_{1}, 0)$.
Here, $P^{-}(t)$ denotes the connected component of
$\mathbb{H}^{n}\setminus P(t)$ containing the south pole ${\bf s}$
on its boundary at infinity. Analogously, we define $\Omega
_{t}^{+}= P^{+}(t)\cap \Omega$.

For each $t \in (t_{1},0)$, set $\mathcal{R}_{t}$ the reflection through $P(t)$ and $\tilde \Omega _{t}^{+}:= \mathcal{R}_{t}(\Omega _{t} ^{-}) $. Since $\Sigma = \partial \Omega $ is $C^2$, there exists $\epsilon >0$ such that $ \tilde \Omega _t ^+ \subset \Omega _t ^+$ for all $t \in (t_1 , t_1 + \epsilon)$.

Now, for each $t \in (t_1 , t_1 +\epsilon)$ define a function $v_t (p)=u(\mathcal{R} _t(p))$, $p\in\tilde \Omega _t ^+$. By Lemma \ref{lemmaARF}, it follows that $v_t $ also satisfies the first PDE in the OEP (\ref{1.4}). So, we can obtain that the function $v_{t}$ satisfies
\begin{eqnarray*}
\left\{
\begin{array}{llll}
\sum\limits_{i=1}^{n}a_{i}(v_{t},|\nabla{v_{t}}|)\cdot{\partial^{2}_{ii}v_{t}}+f(v,|\nabla{v_{t}}|)=0  \quad&\mathrm{in}\quad ~~ \tilde \Omega _t ^+ ,\\
v_{t}(p)=u(p')  \quad&\mathrm{if}\quad ~~p'\in\partial\tilde\Omega _t ^+ \cap P(t),\\
v_{t}(p)=0  \quad&\mathrm{if}\quad ~~p\in\partial \tilde \Omega _t ^+ \cap\mathrm{comp}\left( P(t) )\right),\\
\langle\nabla{v_{t}},\vec{v}\rangle_{\mathbb{H}^{n}}=\alpha \quad&\mathrm{on}\quad
\partial\tilde \Omega _t ^+ \cap\mathrm{comp}\left(P(t)\right),
\end{array}
\right.
\end{eqnarray*} where $\mathrm{comp}\left(P(t)\right)$ is the complement set of the hyperplane $P(t)$ in $\mathbb H^n$. Here we would like to point out one thing, the Neumann data will not change by the reflection $\mathcal{R} _{t}$ through the hyperplane  $P(t)$, $\mathcal{R}_{t}$ inverts the gradient vector and the unit outward normal vector simultaneously. Since the gradient is constant along the normal direction $\langle\nabla{u},\vec{v}\rangle_{\mathbb{H}^{n}}=\alpha$, then $u - v_t >0$ in $\tilde \Omega _t ^+$ for all $t \in (t_1 , t_1 +\epsilon)$, shrinking $\epsilon$ if necessary.

Define the quase-linear elliptic operator $Q$ as
$$ Q h:=\sum\limits_{i=1}^{n}a_{i}(h,|\nabla{h}|)\cdot{\partial^{2}_{ii} h }+f(h,|\nabla{h}|) \, , \,\, h \in C^{2}(U), $$where $U$ is a relatively compact domain in $\mathbb H^{n}$.

Then, since $u$ and $v_{t}$ satisfy $ Q u = 0 = Q v_{t}$, it is easy to get (cf. \cite{gt}) that the function $w_{t} :=u-v_{t}$ satisfies a second order linear uniformly elliptic equation
\begin{eqnarray} \label{EqAux}
\left\{
\begin{array}{lll}
Q(w_{t})=0  \quad&\mathrm{in}\quad ~~~\tilde \Omega _t ^+ ,\\
w_{t} > 0  \quad&\mathrm{in}\quad ~~~\tilde \Omega _t ^+ ,\\
w_{t}=0  \quad&\mathrm{on}\quad ~~\partial\tilde \Omega _t ^+ \cap  P(t) ,\\
w_{t}\geqslant0  \quad&\mathrm{on}\quad
~~\partial\tilde \Omega _t ^+ \cap\mathrm{comp}\left(P(t)\right),
\end{array}
\right.
\end{eqnarray} where the last inequality in the above OEP holds since $u$ is positive in $\Omega$ and $v_{t}=0$ on $\partial\tilde \Omega _t ^+ \cap  P(t)$.

Now, we claim that:
\begin{itemize}
\item either $\tilde \Omega _{t}^{+} \subseteq \Omega _{t}^{+}$ and $w_{t} >0$ in $\tilde \Omega _{t}^{+} $ for all $t \in (t_{1 } ,0)$,
\item or, there exists $\bar t \in (t_{1},0)$ such that $P(\bar t)$ is a hyperplane of symmetry for $\Omega$, that is, $\mathcal{R} _{t}(\Omega)= \Omega $.
\end{itemize}

If this were not true, one of the following situations will happen:
\begin{itemize}
\item[(A)] There exists $\bar t \in (t_{1 } ,0)$ such that $\tilde \Omega _{\bar t}^{+} \subseteq \Omega _{\bar t}^{+}$ and $w_{\bar t}(q) = 0$ at some interior point $q \in \tilde \Omega _{\bar t}^{+} $.

\item[(B)]  There exists $\bar t \in (t_{1 } ,0)$ such that $\tilde \Omega _{\bar t}^{+}$ is internally tangent to the boundary of $\Omega _{\bar t}^{+} $ at some point not at $P(\bar t)$ and $\tilde \Omega _{ t}^{+} \subset \Omega _{ t}^{+}$ for all $t\in (0,\bar t)$.

\item[(C)] There exists $\bar t \in (t_{1 } ,0)$ such that $P(\bar t)$ arrives at a position where it is orthogonal to the boundary of $\Omega$ at some point.
\end{itemize}

If (A) happens, applying the strong maximum principle for linear elliptic operators to $w_{\bar t}$ yields $u-v_{\bar t}\equiv0  $ in $\tilde \Omega _{\bar t} ^+ $, which implies that $\tilde \Omega _{\bar t}^{+} \equiv \Omega _{\bar t}^{+}$. Therefore,
\begin{itemize}
\item either $P(\bar t)$ is a hyperplane of symmetry for $\Omega$, in which case $\partial _{\infty}\Omega =\O$ and $u(p) = u(\mathcal R _{\bar t} (p))$ for all $p\in \Omega$,

\item or $w_{t}>0$  in $\tilde \Omega _{t}^{+}$ as long as $\tilde \Omega _{t}^{+} \subseteq \Omega _{t}^{+}$.
\end{itemize}

Assume that (B) happens, that is, there exists $\bar t \in (t_{1 } ,0)$ such that $\tilde \Omega _{\bar t}^{+}$ is internally tangent to the boundary of $\Omega _{\bar t}^{+} $ at some point $p$ not at $P(\bar t)$ and $\tilde \Omega _{t}^{+} \subset \Omega _{ t}^{+}$ for all $t\in (0,\bar t)$. Clearly, we have $w_{\bar t} = u-v_{\bar t}=0$ at $p$. Together with $L(w_{\bar t})=0$ in  $\tilde \Omega _{\bar t}^{+}$, by applying the Hopf boundary maximum principle it follows that
\begin{eqnarray*}
\langle\nabla w_{\bar t},\vec{v}\rangle_{\mathbb{H}^{n}}>0 \qquad
\mathrm{at}~p.
\end{eqnarray*}

However, this contradicts the fact that
$\langle\nabla{u},\vec{v}\rangle_{\mathbb{H}^{n}}=\langle\nabla{v_{\bar t}},\vec{v}\rangle_{\mathbb{H}^{n}}=\alpha$. Therefore,
\begin{itemize}
\item either $P(\bar t)$ is a hyperplane of symmetry for $\Omega$, in which case $\partial _{\infty}\Omega =\O$  and $u(p) = u(\mathcal R _{\bar t} (p))$ for all $p\in \Omega$,

\item or $\tilde \Omega _{t}^{+}$ is never internally tangent to the boundary of $\Omega _{t}^{+} $ for all $t \in (t_{1 },0)$.
\end{itemize}

Assume (C) happens, that is, suppose that there exists $\bar t \in (t_{1 } ,0)$ such that $P(\bar t)$ arrives at a position where it is orthogonal to the boundary of $\Omega$ at some point $q$. In this
situation, even though we have $w_{\bar t} = u-v_{\bar t}=0$ at $q$, the boundary maximum
principle cannot be applied directly since $q$ is a right angled corner of $\tilde \Omega _{\bar t}^{+}$ and the requisite of the interior tangent ball is not available. We need to use a more delicate version of the boundary maximum principle to overcome this obstacle similar to what has been done by Serrin \cite{s}.

For this, we will show first that $w_{\bar t}$ has a zero of second order at $q$. In order to simplify the computation, we can use an isometry $\mathcal I$ of $\mathbb H^n$ to take the totally geodesic hyperplane $P(\bar t)$ to the equator passing through the origin given by $x_{1}=0$. Of course, the image of $q$ lies on this hyperplane. Furthermore, we can choose $\mathcal I$ suitably such that the inner normal at $q$ of the image of $\Omega$ lies along the $x_{n}$-axis. Hence, instead of introducing new notations for the images of domains under $\mathcal I$, we may assume that the totally geodesic hyperplane is given by $x_{1}=0$ and the inner normal to $\Omega$ at $q$ lies along the $x_{n}$-axis.

Since the boundary of $\Omega$ is of class $C^{2}$, in a sufficiently small neighborhood of $q$, the boundary of $\Omega$ can be seen as a graph over the coordinate hyperplane $x_{n}=0$, which implies that there exists a twice continuously differentiable function $\varphi$ such that in this small neighborhood, $\partial\Omega$ is represented by
\begin{eqnarray*}
x_{n}=\varphi(x_{1},x_{2},\ldots,x_{n-1}).
\end{eqnarray*}

So, near $q$, the Dirichlet condition $u=0$ can be rewritten as
\begin{eqnarray} \label{3.4}
 u(x_{1},x_{2},\ldots,x_{n-1},\varphi(x_{1},x_{2},\ldots,x_{n-1}))=0.
 \end{eqnarray}

From the local representation of $\partial\Omega$ near $q$, it is not difficult to construct a normal field, $\overrightarrow{N}$, to $\partial\Omega$ given by $\overrightarrow{N}=-\sum\limits_{i=1}^{n-1}\frac{\partial\varphi}{\partial x_{i}}\frac{\partial}{\partial x_{i}}+\frac{\partial}{\partial x_{n}}$. The orthogonality of $\overrightarrow{N}$ to the boundary
$\partial\Omega$ near $q$ can be checked easily since the hyperbolic metric $g_{-1}$ is conformally equivalent to the Euclidean metric.

Let
\begin{eqnarray*}
\rho(z):=\frac{4}{(1-|z|^{2})^{2}}=g_{-1}\left(\frac{\partial}{\partial
x_{i}}\Big{|}_{z},\frac{\partial}{\partial x_{i}}\Big{|}_{z}\right),
\qquad i=1,2,\ldots,n-1,
\end{eqnarray*} where $|z|$ is the Euclidean norm of a point $z$, and $g_{-1}$ is the hyperbolic metric. Normalizing $\overrightarrow{N}$ in the hyperbolic sense yields an inward unit normal field of $\partial\Omega$ as follows
\begin{eqnarray*}
\frac{\partial}{\partial\vec{n}}=\frac{1}{\sqrt{\rho}}\cdot\frac{1}{\sqrt{1+\sum\limits_{i=1}^{n-1}\left(\frac{\partial\varphi}{\partial
x_{i}}\right)^{2}}}\cdot\overrightarrow{N}.
\end{eqnarray*}

So, the Neumann-type condition $\langle\nabla{u},\vec{v}\rangle_{\mathbb{H}^{n}}=\alpha$ can be
rewritten as
\begin{eqnarray}  \label{3.5}
-\sum\limits_{i=1}^{n-1}\frac{\partial\varphi}{\partial
x_{i}}\frac{\partial{u}}{\partial x_{i}}+\frac{\partial{u}}{\partial
x_{n}}=-\alpha\cdot\sqrt{\rho}\cdot\sqrt{1+\sum\limits_{i=1}^{n-1}\left(\frac{\partial\varphi}{\partial
x_{i}}\right)^{2}}.
\end{eqnarray}

Differentiating (\ref{3.4}) w.r.t. the variable $x_i$, $1\leqslant i\leqslant n-1$, results into
\begin{eqnarray}  \label{3.6}
\frac{\partial{u}}{\partial x_{i}}+\frac{\partial{u}}{\partial
x_{n}}\cdot\frac{\partial\varphi}{\partial x_{i}}=0.
\end{eqnarray}

Evaluating (\ref{3.6}) at $q$, where $\frac{\partial\varphi}{\partial x_{i}}|_{q}=0$ for  $1\leqslant
i\leqslant n-1$, we have $\frac{\partial{u}}{\partial x_{i}}|_{q}=0$. Together with (\ref{3.5}), it follows that
\begin{eqnarray}  \label{3.7}
\frac{\partial{u}}{\partial
x_{n}}\Big{|}_{q}=-\alpha\cdot\left[\sqrt{1+\sum\limits_{i=1}^{n-1}\left(\frac{\partial\varphi}{\partial
x_{i}}\right)^{2}}\cdot\sqrt{\rho}\right]\Big{|}_{q}=-\alpha\sqrt{\rho}|_{q}.
\end{eqnarray}

Differentiating (\ref{3.6}) w.r.t. $x_i$ for $1\leqslant i\leqslant{n-1}$, evaluating at $q$ and using (\ref{3.7}), we have
\begin{eqnarray} \label{3.8}
\frac{\partial^{2}u}{\partial
x_{i}^{2}}\Big{|}_{q}=-\frac{\partial{u}}{\partial
x_{n}}\cdot\frac{\partial^{2}\varphi}{\partial
x_{i}^{2}}\Big{|}_{q}=\alpha\cdot\left[\sqrt{1+\sum\limits_{i=1}^{n-1}\left(\frac{\partial\varphi}{\partial
x_{i}}\right)^{2}}\cdot\sqrt{\rho}\right]\Big{|}_{q}\cdot\frac{\partial^{2}\varphi}{\partial
x_{i}^{2}}\Big{|}_{q}=\alpha\sqrt{\rho}|_{q}\cdot\frac{\partial^{2}\varphi}{\partial
x_{i}^{2}}\Big{|}_{q}.
\end{eqnarray}

Differentiating (\ref{3.5}) w.r.t. $x_{i}$ yields
\begin{eqnarray}  \label{3.9}
-\sum\limits_{i=1}^{n-1}\frac{\partial^{2}\varphi}{\partial
x_{i}^{2}}\frac{\partial{u}}{\partial
x_{i}}-\frac{\partial\varphi}{\partial
x_{i}}\frac{\partial^{2}u}{\partial x_{i}^{2}}
+\frac{\partial^{2}u}{\partial x_{n}\partial x_{i}}=-\alpha\cdot\frac{\partial\sqrt{\rho}}{\partial x_{i}}\cdot\sqrt{1+\sum\limits_{i=1}^{n-1}\left(\frac{\partial\varphi}{\partial x_{i}}\right)^{2}}-\nonumber\\
\alpha\cdot\sqrt{\rho}\cdot\frac{\partial}{\partial
x_{i}}\left(\sqrt{1+\sum\limits_{i=1}^{n-1}\left(\frac{\partial\varphi}{\partial
x_{i}}\right)^{2}}\right).
\end{eqnarray}

Note that for $1\leqslant i\leqslant n-1$, $\frac{\partial\varphi}{\partial x_{i}}|_{q}=0$, $\frac{\partial{u}}{\partial x_{i}}|_{q}=0$, and $\frac{\partial\sqrt{\rho}}{\partial x_{i}}|_{q}=0$ (this is because $\sqrt{\rho}$ is a radial function and $\frac{\partial}{\partial x_{i}}$ is tangent to a sphere centered at the origin $o$). So, evaluating (\ref{3.9}) at $q$, we can obtain
\begin{eqnarray}
\frac{\partial^{2}u}{\partial x_{n}\partial
x_{i}}\Big{|}_{q}=-\alpha\cdot\sqrt{\rho}\cdot\frac{\partial}{\partial
x_{i}}\left(\sqrt{1+\sum\limits_{i=1}^{n-1}\left(\frac{\partial\varphi}{\partial
x_{i}}\right)^{2}}\right)\Big{|}_{q}=0.
\end{eqnarray}

Applying the fact that $Qu=0$, and together with (\ref{3.8}), we can
evaluate $\frac{\partial^{2}u}{\partial\xi_{2}^{2}}$ at $q$ as
follows
\begin{eqnarray}
a_{n}|_{q}\cdot\frac{\partial^{2}u}{\partial
x_{n}^{2}}\Big{|}_{q}=-f|_{q}-\sum\limits_{i=1}^{n-1}\alpha\cdot{a}_{i}|_{q}\cdot\left[\sqrt{1+\sum\limits_{k=1}^{n-1}\left(\frac{\partial\varphi}{\partial
x_{k}}\right)^{2}}\cdot\sqrt{\rho}\right]\Big{|}_{q}\cdot\frac{\partial^{2}\varphi}{\partial
x_{i}^{2}}\Big{|}_{q}.
\end{eqnarray}

Now, we need to calculate the second-order partial derivatives of $v=u\circ\mathcal{R}$ at $q$. As we have mentioned above, through the suitable isometry on $\mathbb{H}^{n}$, the totally geodesic hyperplane $P(\bar t)$ can be given by $x_{n}=0$ and the inner normal vector of $\partial\Omega$ is along $x_{n}$-direction. Therefore, in this setting, the Alexandrov reflection $\mathcal{R}$ can be given simply as
$(x_{1},x_{2},\ldots,x_{n-1},x_{n})\rightarrow(x_{1},x_{2},\ldots,x_{n-1},-x_{n})$ along the $x_{n}-$axis,
and correspondingly, the function $v_{\bar t}$ can be expressed as follows
\begin{eqnarray*}
v_{\bar t}(x_{1},x_{2},\ldots,x_{n-1},x_{n})=u(x_{1},x_{2},\ldots,x_{n-1},-x_{n}).
\end{eqnarray*}
Therefore,  for $1\leqslant i\leqslant n-1$, we can get that
\begin{eqnarray*}
\frac{\partial{v_{\bar t}}}{\partial x_{i}}=\frac{\partial{u}}{\partial
x_{i}}=0, \qquad \frac{\partial{v_{\bar t}}}{\partial
x_{n}}=\frac{\partial{u}}{\partial x_{n}}, \qquad
\frac{\partial^{2}v_{\bar t}}{\partial x_{i}\partial
x_{n}}=-\frac{\partial^{2}u}{\partial x_{i}\partial x_{n}}=0, \qquad
\frac{\partial^{2}u}{\partial
x_{n}^{2}}=\frac{\partial^{2}v_{\bar t}}{\partial x_{n}^{2}}
\end{eqnarray*}
and
\begin{eqnarray*}
\frac{\partial^{2}v_{\bar t}}{\partial
x_{i}^{2}}=\frac{\partial^{2}u}{\partial
x_{i}^{2}}=\alpha\sqrt{\rho}|_{q}\cdot\frac{\partial^{2}\varphi}{\partial
x_{i}^{2}}\Big{|}_{q}=0.
\end{eqnarray*}

Here we would like to point one thing, that is, since in the situation (C), the reflected cap $\tilde \Omega _{\bar t}^{+}$  is contained in $\Omega _{\bar t}^{+}$ , the inner normal vector of $\partial\Omega$ at $q$ is along the $x_{n}$-axis, and the function $\varphi$ is twice continuously differentiable, one can get $\frac{\partial^{2}\varphi}{\partial x_{i}^{2}}|_{q}=0$ for $1\leqslant i\leqslant n-1$ by applying Taylor's theorem with remainder. So, we know that all the first-order and second-order partial derivatives of $u$ and $v_{\bar t}$ agree at $q$. Applying \cite[Lemma 2]{s} to $u-v_{\bar t}$, which is called \emph{the boundary point lemma at a corner} therein, we can obtain that either $\frac{\partial(u-v_{\bar t})}{\partial{s}}|_{q}>0$ or $\frac{\partial^{2}(u-v_{\bar t})}{\partial{s^2}}|_{q}>0$, where $\frac{\partial}{\partial{s}}$ denotes a constant vector field such
that $\frac{\partial}{\partial{s}}|_{q}$ enters $\Omega$ non-tangentially. Clearly, this is contradict with the fact that all the first-order and second-order partial derivatives of $u-v_{\bar t}$ vanish
at $q$.

Therefore,
\begin{itemize}
\item either $P(\bar t)$ is a hyperplane of symmetry for $\Omega$ and $u(p)=u(\mathcal R _{\bar t} (p))$ for all $p\in \Omega$, in which case $\partial _{\infty}\Omega =\O$,

\item or $P( t)$ never arrives at a position where it is orthogonal to the boundary of $\Omega$ at some point for all $t \in (t_{1 },0)$.
\end{itemize}

Summing up the above argument, we have shown that
\begin{itemize}
\item[(1)] either $\tilde \Omega _{t}^{+} \subseteq \Omega _{t}^{+}$ and $w_{t} >0$ in $\tilde \Omega _{t}^{+} $ for all $t \in (t_{1 } ,0)$,
\item[(2)] or, there exists $\bar t \in (t_{1},0)$ such that $P(\bar t)$ is a hyperplane of symmetry for $\Omega$, that is, $\mathcal{R} _{\bar t}(\Omega)= \Omega $  and $u(p)=u(\mathcal R _{\bar t} (p))$ for all $p\in \Omega$.
\end{itemize}

If (1) holds, the same must hold if we begin from $+\infty$, that is, $\tilde \Omega _{t}^{-} \subseteq \Omega _{t}^{-}$ and $w_{t} >0$ in $\tilde \Omega _{t}^{-} $ for all $t \in (0 , t_{2})$. But this implies that $P \equiv P(0)$ must be a hyperplane of symmetry  and $u(p)=u(\mathcal R (p))$ for all $p\in \Omega$.

If (2) holds, then $\partial _{\infty} \Omega = \O$ clearly and so
$\partial _{\infty} \Omega$ is included in all the equators of
$\mathbb S ^{n-1}$. Let $\mathbb{F}$ the set of all possible totally
geodesic hyperplanes $P$ about which $\Omega$ is symmetric. In the
group of M\"{o}bius transformations, let $\mathbb{G}$ be the closure
of the group generated by the reflections on $\mathbb{H}^{n}$ about
the hyperplanes $P$ in the family $\mathbb{F}$. So, $\mathbb{G}$ is
a compact group of isometries.

Using an argument involving center of mass (cf. \cite{kh}), we can get that $\mathbb{G}$ has a fixed point $m\in\mathbb{H}^{n}$. So, $\mathbb{F}$ consists of the set of all totally geodesic hyperplanes passing through $m$, and hence $\mathbb{G}$ contains the group of rotations about $m$. This implies that $\Omega$ is either a geodesic ball or a spherical shell. However, by characterization of each hyperplane in $\mathbb{F}$, we know that $\Omega$ cannot be a spherical shell. So, $\Omega$ must be a geodesic ball and $u$ is radially symmetric.

This finishes the proof.
\end{proof}

Hence, as a corollary we have

\begin{corollary} \label{theorem3.3-1}
Assume that $\Omega$ is a bounded connected open domain in $\mathbb{H}^{n}$, with $C^{2}$ boundary, on which the OEP (\ref{1.4}) has a solution $u\in{C}^{2}(\overline{\Omega})$. Then $\Omega$ must be a geodesic ball and $u$ is radially symmetric.
\end{corollary}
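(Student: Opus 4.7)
The plan is to derive this corollary as an immediate consequence of Theorem \ref{ThLR}. The key observation is that, since $\Omega$ is bounded in $\mathbb{H}^{n}$, its closure in the cone topology does not reach the sphere at infinity; equivalently, $\partial_{\infty}\Omega = \emptyset$. Indeed, boundedness means $\Omega$ is contained in a geodesic ball $B(p,R)$ of finite radius, and such a ball has empty boundary at infinity with respect to the cone topology on $\mathbb{H}^{n}\cup\mathbb{S}^{n-1}_{\infty}$.

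Once this is established, the proof reduces to a simple invocation of Theorem \ref{ThLR}. Pick any equator $E\subset\mathbb{S}^{n-1}_{\infty}$; for instance the equator centered at the north pole. Because $\partial_{\infty}\Omega=\emptyset$, the inclusion $\partial_{\infty}\Omega\subset E$ holds trivially, so the hypothesis of Theorem \ref{ThLR} is satisfied. Since $\partial_{\infty}\Omega=\emptyset$, we fall into the first bullet of the conclusion of Theorem \ref{ThLR}, which asserts that $\Omega$ is a geodesic ball and $u$ is radially symmetric.

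There is no substantive obstacle; the work has already been done in Theorem \ref{ThLR}. I should only verify cleanly that ``bounded'' implies ``$\partial_{\infty}\Omega=\emptyset$'' in the cone topology, which follows from property (A2) of the cone topology recalled in Section \ref{section2}: a point $x\in\mathbb{H}^{n}_{\infty}$ lies in $\partial_{\infty}\Omega$ only if every cone neighborhood $T(\mathcal U,r)$ of $x$ meets $\Omega$, which is impossible if $\Omega\subset B(p,R)$ for some finite $R$, since one may take $r>R+d(p,\gamma_v(0))$ to make $T(\mathcal U,r)$ disjoint from $B(p,R)$. This completes the proof.
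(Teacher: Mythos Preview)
Your proof is correct and follows precisely the route the paper intends: the corollary is stated immediately after Theorem \ref{ThLR} with no separate argument, so the paper treats it as an immediate consequence in exactly the way you describe. Your added verification that boundedness implies $\partial_{\infty}\Omega=\emptyset$ simply makes explicit what the paper leaves implicit.
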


\begin{remark}
\rm{If the first equation in the OEP (\ref{1.4}) is \emph{simplified} to be $\Delta{u}=-1$ in $\Omega$, then the conclusion of Theorem \ref{theorem3.3-1} has been obtained by Molzon \cite{mr}. Equivalently, we have improved Molzon's conclusion to a more general situation.}
\end{remark}

Now, we would like to give another interesting application. However, before that we need the following so-called \emph{basic hyperbolic geometry} (cf. \cite{dCL}).

\begin{lemma}  \label{lemma3.5-1}
(Basic hyperbolic geometry) Let $\Sigma$ be a connected properly embedded hypersurface in hyperbolic $n$-space $\mathbb{H}^{n}$ whose asymptotic boundary consists of a single point $x\in \partial _{\infty } \mathbb H ^{n}$. Let $P$ be a totally geodesic hyperplane such that $x\in \partial_{\infty}P$. If $\Sigma$ is symmetric about every such totally geodesic hyperplane $P$, then $\Sigma$ is a horosphere. Furthermore, if $P_{\gamma}(t)$, $P_{\gamma}(t)\neq P$ is an arbitrary translated copy of $P$ along a geodesic $\gamma$ cutting orthogonally $P$, then $P_{\gamma}(t)\cap{\Sigma }$ is empty or else $P_{\gamma}(t)\cap{\Sigma}$ is compact.
\end{lemma}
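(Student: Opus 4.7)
The plan is to work in the upper half-space model of $\mathbb{H}^n$, choosing coordinates so that the singular asymptotic point $x$ corresponds to $\infty$. In this model, the totally geodesic hyperplanes $P$ with $x\in\partial_{\infty} P$ are exactly the vertical Euclidean hyperplanes, i.e. those of the form $\{(y',y_n)\in\mathbb{R}^n_{+} : \langle y',v\rangle_{\mathbb{R}^{n-1}}=c\}$ for some horizontal unit vector $v\in\mathbb{R}^{n-1}$ and some $c\in\mathbb{R}$; reflection across such a $P$ is an ordinary Euclidean reflection that fixes the last coordinate $y_n$. The horospheres based at $x$ are precisely the horizontal slices $\{y_n=\mathrm{const}>0\}$.

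First I would observe that the group $G$ generated by all such reflections coincides with the full Euclidean isometry group of $\mathbb{R}^n$ preserving $y_n$: composing two parallel such reflections yields an arbitrary horizontal translation, while composing two nonparallel ones yields an arbitrary horizontal rotation. Under the identification of each slice $\{y_n=c\}$ with $\mathbb{R}^{n-1}$, $G$ acts as the standard Euclidean group, hence transitively on every horosphere based at $x$. The symmetry hypothesis on $\Sigma$ is precisely $G$-invariance, so $\Sigma$ is a union of $G$-orbits, i.e.\ a union of horospheres based at $x$. Pick any $p\in\Sigma$: the orbit $G\cdot p$ is the full horosphere $H$ through $p$, so $H\subseteq\Sigma$. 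Since $H$ and $\Sigma$ are both smooth hypersurfaces and $H\subseteq\Sigma$, comparing tangent spaces at any point of $H$ gives that $H$ is open in $\Sigma$; since $H$ is closed in $\mathbb{H}^n$ (horospheres are closed submanifolds), $H$ is also closed in $\Sigma$. Connectedness of $\Sigma$ then forces $\Sigma=H$, a horosphere based at $x$.

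For the \emph{furthermore} statement, a geodesic $\gamma$ meeting $P$ orthogonally cannot be vertical (vertical geodesics are parallel to vertical hyperplanes, not perpendicular to them), so $\gamma$ is a Euclidean half-circle whose two endpoints lie on the finite part of $\partial\mathbb{R}^n_{+}$. Consequently, for any $t\neq 0$ the translated hyperplane $P_{\gamma}(t)$ does not contain $x=\infty$ in its asymptotic boundary, so $P_{\gamma}(t)$ must be a Euclidean hemisphere centered on $\partial\mathbb{R}^n_{+}$ of some finite Euclidean radius $\rho$. Intersecting with the horosphere $\Sigma=\{y_n=c\}$ yields either the empty set (when $\rho\leqslant c$) or the Euclidean $(n-2)$-sphere obtained by slicing the hemisphere at height $c$, which is compact.

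The main obstacle I anticipate is the argument in the second paragraph: cleanly checking that the reflections through vertical hyperplanes generate exactly the horizontal Euclidean group, and then using proper embeddedness together with the condition $\partial_\infty\Sigma=\{x\}$ to conclude that $\Sigma$ is a \emph{single} horosphere rather than a disjoint union. The Euclidean group computation is essentially linear algebra; the geometric content is the open-and-closed argument that a connected hypersurface containing an entire horosphere of the correct dimension cannot strictly contain it.
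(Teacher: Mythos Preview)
The paper does not supply its own proof of this lemma; it is quoted as \emph{basic hyperbolic geometry} with a reference to do Carmo--Lawson \cite{dCL} and then used as a black box in the proof of Theorem \ref{theorem3.6-1}. So there is nothing in the paper to compare your argument against.

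That said, your proof is correct and is the natural one. Placing $x$ at $\infty$ in the upper half-space model turns the hypothesis into invariance under all Euclidean reflections in vertical hyperplanes; these generate the full horizontal Euclidean group, which acts transitively on each horosphere $\{y_n=c\}$, and your open--and--closed argument together with connectedness then forces $\Sigma$ to be a single such slice. For the ``furthermore'' clause your observation that a geodesic orthogonal to a vertical hyperplane is a half-circle (hence $P_\gamma(t)$ is a Euclidean hemisphere for $t\neq 0$) is exactly right; the only cosmetic slip is that when the hemisphere radius equals $c$ the intersection is a single point rather than empty, which is of course still compact. One contextual remark: in the original do Carmo--Lawson setting the compactness statement is logically independent of the symmetry hypothesis (it follows just from $\partial_\infty\Sigma=\{x\}$ and $x\notin\partial_\infty P_\gamma(t)$, since an unbounded intersection would produce an asymptotic point of $\Sigma$ on $\partial_\infty P_\gamma(t)$), and is what allows the Alexandrov reflection to be run in the first place; but as the lemma is phrased here, your reading and your argument are entirely adequate.
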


In order to establish correctly the next result, we shall introduce some concepts on Hyperbolic Geometry. Given any point at infinity $x \in \partial _\infty \mathbb H ^n $, there exists a $(n-1)-$parameter family of parabolic translations $\{\mathcal T^x _{v} \}_{v\in \mathbb R^{n-1}}$ that fix $x$ at infinity and, hence,
$$  \mathcal T ^x_v (H_x (t)) = H_x (t) \text{ for all } v\in \mathbb R ^{n-1} \text{ and } t\in \mathbb R ,$$where $\{H_x (t) \}_{t\in \mathbb R}$ is the foliation by horospheres at $x \in \partial _\infty \mathbb H^n$.

Hence, one can check that given any $v \in \mathbb R ^{n-1}$ there exists two totally geodesic hyperplanes $P_1$ and $P_2$ such that $x \in \partial _\infty P_1\cap \partial _\infty P_2$ whose associated hyperbolic reflections $\mathcal R _1 ,\mathcal R _2 \in {\rm Iso}(\mathbb H ^n)$ satisfy
\begin{equation}\label{Eq:TransParabolic}
\mathcal T ^x_v = \mathcal R _1 \circ \mathcal R_2 .
\end{equation}

So, given a horoball $D_x (t)$, we can parametrize it as $(0,+\infty) \times \mathbb R ^{n-1}$ by
$$\begin{matrix}
(0,+\infty) \times \mathbb R ^{n+1} & \to & D_x(t) \\
(t,v) & \to & \mathcal T ^x _v (\gamma (t)) ,
\end{matrix}$$where $\gamma (t)$ is a geodesic with initial conditions $\gamma (0) \in H_x (t)$ and $\gamma ' (0)$ agrees with the inward normal of $H_x (t)$ at $\gamma (0)$.

\begin{defn}\label{Def:HoroSymmetric}
Given a $C^2$ function $u : D_x (t) \to \mathbb R$ is {\bf horospherically symmetric} if
$$ u(p) = u(\mathcal T^x _v (p)) \text{ for all } v \in \mathbb R^{n-1} .$$
\end{defn}

Applying the above lemma, we can prove the following.

\begin{theorem} \label{theorem3.6-1}
Assume that $\Omega$ is a domain in $\mathbb{H}^{n}$, with boundary a $C^2$ properly embedded hypersurface $\Sigma$ and whose asymptotic boundary is a point $x_{0} \in \partial _\infty \mathbb H ^n$, on which the OEP (\ref{1.4}) has a solution $u\in{C}^{2}(\overline{\Omega})$.

Then, $\Omega$ is a horoball $D_x (t)$, for some $t\in \mathbb R$ and $u$ is horospherically symmetric.
\end{theorem}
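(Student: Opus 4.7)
The plan is to reduce Theorem \ref{theorem3.6-1} to Theorem \ref{ThLR} by exploiting the abundance of totally geodesic hyperplanes whose boundary at infinity contains the single asymptotic point $x_0$, and then invoke the basic hyperbolic geometry of Lemma \ref{lemma3.5-1} to identify $\Sigma$ as a horosphere.

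First I would observe that for every totally geodesic hyperplane $P \subset \mathbb H^n$ with $x_0 \in \partial_\infty P$, the set $\partial_\infty \Omega = \{x_0\}$ is trivially contained in the $(n-2)$-sphere $\partial_\infty P$. After the standard conformal identification between hyperbolic isometries and conformal transformations of $\mathbb S^{n-1}$ (as discussed right after the statement of Theorem \ref{ThLR}), one may bring $\partial_\infty P$ to the equator centered at the north pole by a hyperbolic translation, and the OEP \eqref{1.4} is preserved. Theorem \ref{ThLR} then applies in the non-empty asymptotic boundary case and yields $\mathcal{R}_P(\Omega) = \Omega$ together with $u(p) = u(\mathcal{R}_P(p))$ for all $p \in \Omega$. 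In particular, $\Sigma = \partial\Omega$ is invariant under reflection through every such $P$.

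Next I would feed this symmetry information into Lemma \ref{lemma3.5-1}. Since $\Sigma$ is a connected $C^2$ properly embedded hypersurface in $\mathbb H^n$ with $\partial_\infty \Sigma = \{x_0\}$, and it is symmetric with respect to every totally geodesic hyperplane $P$ satisfying $x_0 \in \partial_\infty P$, the lemma forces $\Sigma$ to be a horosphere $H_{x_0}(t_0)$ for some $t_0 \in \mathbb R$. The complement $\mathbb H^n \setminus H_{x_0}(t_0)$ has two connected components: the horoball $D_{x_0}(t_0)$, whose asymptotic boundary is exactly $\{x_0\}$, and the exterior component, whose asymptotic boundary is all of $\mathbb S^{n-1}$. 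Since by hypothesis $\partial_\infty \Omega = \{x_0\}$, we conclude $\Omega = D_{x_0}(t_0)$.

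Finally, for the horospherical symmetry of $u$, I would use the decomposition \eqref{Eq:TransParabolic}: for any $v \in \mathbb R^{n-1}$, there exist two totally geodesic hyperplanes $P_1, P_2$ with $x_0 \in \partial_\infty P_1 \cap \partial_\infty P_2$ such that $\mathcal T^{x_0}_v = \mathcal R_1 \circ \mathcal R_2$, where $\mathcal R_i = \mathcal R_{P_i}$. Since $u \circ \mathcal R_i = u$ for $i=1,2$ from the first step, we obtain $u \circ \mathcal T^{x_0}_v = u$ for every $v$, which is exactly horospherical symmetry in the sense of Definition \ref{Def:HoroSymmetric}. The main obstacle I anticipate is the bookkeeping needed to apply Theorem \ref{ThLR} to a \emph{one-parameter family} of hyperplanes through $x_0$ in a consistent way; once that is handled via the conformal reduction, the rest is a direct consequence of Lemma \ref{lemma3.5-1} and the parabolic decomposition \eqref{Eq:TransParabolic}.
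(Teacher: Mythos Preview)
Your proposal is correct and follows essentially the same argument as the paper: both apply Theorem \ref{ThLR} (after the conformal reduction to an equator) to every totally geodesic hyperplane $P$ with $x_0 \in \partial_\infty P$ to obtain $\mathcal R_P(\Omega)=\Omega$ and $u\circ\mathcal R_P=u$, then invoke Lemma \ref{lemma3.5-1} to identify $\Sigma$ as a horosphere, and finally use the decomposition \eqref{Eq:TransParabolic} to deduce horospherical symmetry of $u$. Your explicit justification that $\Omega$ is the horoball (rather than its complement) via the asymptotic boundary is a small clarification the paper leaves implicit.
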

\begin{proof}
Since the boundary at infinity of $\Omega$ is a single point, we claim that Theorem \ref{ThLR} implies that $\Omega$ is symmetric with respect to every totally geodesic hyperplane containing $x_0\in \partial _\infty \Sigma$, that is, for any reflection $\mathcal R \in {\rm Iso}(\mathbb H ^n)$ that leaves invariant a totally geodesic hyperplane $P$ such that $x\in \partial _\infty P$, we have that $\mathcal R (\Omega) = \Omega$ and $u(p)=u(\mathcal R (p))$ for all $p\in \Omega$. Hence, Lemma \ref{lemma3.5-1} implies that $\Omega$ is a horoball and \eqref{Eq:TransParabolic} implies that $u$ is horospherically symmetric.

Let us prove the Claim. Let $B_{\mathbb S ^{n-1}}(x, r)$ be any geodesic ball in $\mathbb S^{n-1}$ that contains $x_0$ on its boundary, i.e., $x_0 \in \partial B_{\mathbb S ^{n-1}}(x, r)$. Let us denote by $P(x,r)$ the unique totally geodesic hyperplane with boundary at infinity $\partial _\infty P(x,r) = \partial B_{\mathbb S ^{n-1}}(x, r)$.

Then, there exists a unique isometry $\mathcal{I} _{x}$ that takes $\partial B_{\mathbb S ^{n-1}}(x, r)$ into an equator $E_x$ containing $x_0$. Hence, by Theorem \ref{ThLR}, the domain $\mathcal{I} _{x} (\Omega)$ is symmetry w.r.t. the totally geodesic hyperplane $P_x$ with boundary at infinity $\partial _\infty P _x = E_x$. Thus, if we undo the isometry $\mathcal{I} _{x}$, then $\Omega$ is symmetric w.r.t. the totally geodesic hyperplane $\mathcal{I} _{x}(P_x) = P(x,r)$, as claimed.
\end{proof}

This can be seen as the OEP version of the famous do Carmo-Lawson Theorem \cite{dCL}.

\begin{remark}
\rm{If the first equation in the OEP (\ref{1.4}) is \emph{simplified} to be $\Delta{u}=-1$ in $\Omega$, then the conclusion of Theorem \ref{theorem3.6-1} has been obtained by Sa Earp and Toubiana \cite{et}. Nevertheless, we have improved the conclusion of Sa Earp and Toubiana in \cite{et} to a more general situation.}
\end{remark}

In particular, Theorem \ref{theorem3.6-1} combined with Theorem \ref{ThBoundary} yields

\begin{theorem}\label{OneEnd}
There is no (strictly) positive function $u\in{C}^{2}(\Omega)$ that solves the equation
\begin{eqnarray*}
\Delta{u}+f(u)=0  \quad\mathrm{in}\quad\Omega,
\end{eqnarray*} where $f:(0,+\infty)\rightarrow\mathbb{R}$ satisfies the property $\mathbb{P}_{1}$ mentioned in Lemma \ref{proposition1} for some constant $\lambda$ satisfying $\lambda>\frac{(n-1)^{2}}{4}$, if $\Omega$ is a domain in $\mathbb{H}^{n}$  whose asymptotic boundary is a point, with boundary a $C^2$ properly embedded hypersurface $\Sigma$.
\end{theorem}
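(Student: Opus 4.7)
The plan is to derive the non-existence by contradiction, combining the rigidity result of Theorem \ref{theorem3.6-1} with the narrow obstruction of Corollary \ref{CorBoundary}. Suppose toward a contradiction that an $f$-extremal domain $\Omega \subset \mathbb H^n$ with $C^2$ properly embedded boundary and asymptotic boundary $\partial_{\infty}\Omega = \{x_0\}$ a single point admits a strictly positive $u \in C^2(\overline{\Omega})$ solving $\Delta u + f(u)=0$ (together with the Dirichlet and Neumann conditions inherited from the $f$-extremal setting of Section \ref{section3}).

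First I would apply Theorem \ref{theorem3.6-1}: its hypotheses hold verbatim since $\Sigma$ is $C^2$ properly embedded and $\partial_{\infty}\Omega$ is a single point. The conclusion is that $\Omega = D_{x_0}(t)$ is a horoball for some $t \in \mathbb R$, and $u$ is horospherically symmetric. This is precisely the rigidity step that converts a purely topological assumption on the asymptotic boundary into a very specific geometric shape.

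Next I would invoke Corollary \ref{CorBoundary}. Since we are in $\mathbb H^n = \mathbb H^n(-1)$ we have $k_1 = k_2 = 1$, so the hypothesis $\lambda > (n-1)^2/4$ of Theorem \ref{OneEnd} matches exactly the requirement $\lambda > (n-1)^2 k_1 / 4$ of the corollary. The corollary then asserts that no horoball can be an $f$-extremal domain under the property $\mathbb P_1$. This directly contradicts $\Omega = D_{x_0}(t)$, completing the argument.

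Essentially no new technical work is required: the proof is an assembly step, and neither ingredient is in doubt. The only thing worth verifying carefully is the matching of the constants, namely that $k_1 = 1$ in $\mathbb H^n(-1)$ so the threshold $(n-1)^2/4$ coincides. The geometric heart of why Theorem \ref{OneEnd} is true was already carried out earlier: Theorem \ref{theorem3.6-1} uses the moving plane method via Theorem \ref{ThLR} plus do Carmo--Lawson-type hyperbolic geometry (Lemma \ref{lemma3.5-1}) to force $\Omega$ into a horoball, and Corollary \ref{CorBoundary} uses the narrow property (Lemma \ref{proposition2}) to exclude any domain containing a geodesic ball of radius larger than $c_1(n,1)/\sqrt{\lambda}$ — and a horoball trivially contains geodesic balls of every radius.
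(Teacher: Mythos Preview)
Your proposal is correct and follows exactly the paper's approach: the paper states Theorem \ref{OneEnd} as a direct combination of Theorem \ref{theorem3.6-1} (forcing $\Omega$ to be a horoball) and Theorem \ref{ThBoundary} (ruling horoballs out), which is precisely your two-step contradiction via Corollary \ref{CorBoundary}. Your observation that the constants match because $k_1=1$ in $\mathbb H^n(-1)$, and your remark that the full OEP boundary data are implicitly needed to invoke Theorem \ref{theorem3.6-1}, are both on point.
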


Given any two distinct points at infinity $x,y \in \partial _\infty \mathbb H ^n$, there exists a $(n-2)-$parameter family of rotations $\{ \mathcal R ^\beta _\theta \}_{\theta \in \mathbb S ^{n-2}}$ that leave invariant $\beta$, where $\beta$ is the complete geodesic in $\mathbb H ^{n}$ joining $x$ and $y$ at infinity.

Moreover, one can check that given any $\theta \in \mathbb S ^{n-2}$ there exist two totally geodesic hyperplanes $P_1$ and $P_2$ such that $\beta  \subset P_1 \cap  P_2$ whose associated hyperbolic reflections $\mathcal R _1 ,\mathcal R _2 \in {\rm Iso}(\mathbb H ^n)$ satisfy
\begin{equation}\label{Eq:Rotations}
\mathcal R^\beta _\theta = \mathcal R _1 \circ \mathcal R_2 .
\end{equation}

As above, one can define

\begin{defn}\label{Def:AxialSymmetric}
Given a $C^2$ function $u : \Omega \to \mathbb R$ is {\bf axially symmetric w.r.t. $\beta$} if there exists a complete geodesic $\beta $ in $\mathbb H ^n$ such that $\mathcal R ^\beta _\theta (\Omega) = \Omega$ for all $\theta \in \mathbb S ^{n-2}$ and
$$ u(p) = u(\mathcal R^\beta _\theta (p)) \text{ for all } \theta \in \mathbb S^{n-2} .$$

When $n=2$, $u$ is axially symmetric if there exists a complete geodesic $\beta $ such that $\mathcal R_\beta (\Omega) = \Omega$ and $u(p)=u(\mathcal R _\beta (p))$ for all $p \in \Omega$, where $\mathcal R_\beta  \in {\rm Iso}(\mathbb H^2)$ is the reflection that leaves invariant $\beta$.
\end{defn}

Also, another consequence of Theorem \ref{ThLR} and Definition \ref{Def:AxialSymmetric} is the following:

\begin{theorem}\label{ThDelaunay}
Assume that $\Omega$ is a domain in $\mathbb{H}^{n}$, with boundary
a $C^2$ properly embedded hypersurface $\Sigma$ and whose asymptotic
boundary consists in two distinct points $x, y \in \mathbb S
^{n-1}$, $x\neq y$, on which the OEP (\ref{1.4}) has a solution
$u\in{C}^{2}(\overline{\Omega})$. Then $\Omega$ is rotationally
symmetric with respect to the axis given by the complete geodesic
$\beta $ whose boundary at infinity is $\{ x,y\}$, i.e., $\beta ^+ =
x$ and $\beta ^- = y$. In other words, $\Omega$ is invariant by the
$(n-2)-$parameter group of rotations in $\mathbb H^n$ fixing
$\beta$. Moreover, $u$ is axially symmetric w.r.t. $\beta$.
\end{theorem}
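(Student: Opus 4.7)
The plan is to deduce this theorem directly from Theorem \ref{ThLR} together with the factorization of rotations given in \eqref{Eq:Rotations}, in close analogy with the horospherical case (Theorem \ref{theorem3.6-1}).

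First I would observe that every totally geodesic hyperplane $P \subset \mathbb H^n$ containing the geodesic $\beta$ has as its boundary at infinity an equator $\partial_\infty P$ of $\mathbb S^{n-1}$ that contains both endpoints $x,y$ of $\beta$. Since $\partial_\infty \Omega = \{x,y\}$ is nonempty and contained in $\partial_\infty P$, the second alternative in Theorem \ref{ThLR} applies and yields that $\Omega$ is invariant under the reflection $\mathcal R_P$ through $P$ and that $u \circ \mathcal R_P = u$ on $\Omega$. In other words, $\Omega$ is symmetric with respect to \emph{every} totally geodesic hyperplane that contains $\beta$, and $u$ respects each of these reflections.

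Next, given an arbitrary rotation $\mathcal R^\beta_\theta$, $\theta \in \mathbb S^{n-2}$, I would invoke \eqref{Eq:Rotations} to write $\mathcal R^\beta_\theta = \mathcal R_1 \circ \mathcal R_2$, where each $\mathcal R_i$ is the reflection through a totally geodesic hyperplane $P_i \supset \beta$. By the previous step, both $\mathcal R_1$ and $\mathcal R_2$ preserve $\Omega$ and $u$, hence so does their composition; this is precisely the axial symmetry stated in Definition \ref{Def:AxialSymmetric}. The $n=2$ case is handled by the same argument using the single hyperplane $P = \beta$, whose boundary at infinity is the zero-dimensional ``equator'' $\{x,y\} \subset \mathbb S^1$.

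I do not anticipate a serious obstacle: the heavy lifting (the moving-plane argument, the boundary point lemma at a corner, and the reduction to a hyperplane of symmetry) has already been carried out inside Theorem \ref{ThLR}. The only thing to check is the elementary hyperbolic-geometric fact that the $(n-2)$-parameter family of totally geodesic hyperplanes containing a fixed geodesic $\beta$ generates, via pairs of reflections, the entire $(n-2)$-parameter group of rotations that fix $\beta$ pointwise, which is exactly the content of \eqref{Eq:Rotations}.
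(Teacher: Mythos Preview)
Your proposal is correct and is precisely the argument the paper has in mind: the theorem is stated there as ``another consequence of Theorem \ref{ThLR} and Definition \ref{Def:AxialSymmetric}'' with no further proof, and your derivation via reflections through every totally geodesic hyperplane containing $\beta$ together with the decomposition \eqref{Eq:Rotations} is exactly what is intended, mirroring the proof of Theorem \ref{theorem3.6-1}. One cosmetic point: $\partial_\infty P$ is literally an equator only after positioning $\beta$ through the origin of the ball model (or invoking the isometry reduction discussed right after Theorem \ref{ThLR}); you may want to say this explicitly.
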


Note that Theorem \ref{ThLR} and Theorem \ref{OneEnd} prove the BCN-conjecture in $\mathbb H ^n$ under assumptions on its boundary at infinity.

\begin{corollary}\label{CorBCNn}
Assume that $\Omega$ is a domain in $\mathbb{H}^{n}$, with boundary a $C^2$ properly embedded hypersurface $\Sigma$ and whose asymptotic boundary consists at most in one point $x\in \mathbb S ^{n-1}$, on which the OEP (\ref{1.4}) has a solution $u\in{C}^{2}(\overline{\Omega})$. Then,
\begin{itemize}
\item either $\Omega$ is a geodesic ball and $u$ is radially symmetric,
\item or $\Omega$ is a horoball and $u$ is horospherically symmetric.
\end{itemize}
\end{corollary}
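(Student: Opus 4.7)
The plan is to prove Corollary \ref{CorBCNn} by a direct case analysis on the cardinality of $\partial_\infty \Omega$, since the hypothesis allows for exactly two possibilities: either $\partial_\infty \Omega = \emptyset$ or $\partial_\infty \Omega = \{x\}$ for a single point $x \in \mathbb{S}^{n-1}$. In each case, one of the two structural theorems already established in Section \ref{section3} applies essentially without additional work, so the corollary will follow as a clean consequence of Theorem \ref{ThLR} and Theorem \ref{theorem3.6-1}.

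In the first case, $\partial_\infty \Omega = \emptyset$, I observe that the empty set is vacuously contained in any equator $E \subset \mathbb S^{n-1}$ of the boundary at infinity. Therefore the hypotheses of Theorem \ref{ThLR} are fulfilled with, say, the equator centered at the north pole. Since $\partial_\infty \Omega = \emptyset$ falls into the first bullet of that theorem, we conclude immediately that $\Omega$ is a geodesic ball and $u$ is radially symmetric, which is exactly the first alternative in the statement of the corollary.

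In the second case, $\partial_\infty \Omega$ consists of a single point $x \in \mathbb{S}^{n-1}$. Here the hypothesis of Theorem \ref{theorem3.6-1} is met verbatim: a connected domain with $C^2$ properly embedded boundary whose asymptotic boundary reduces to a single point, on which the OEP (\ref{1.4}) admits a solution $u \in C^2(\overline{\Omega})$. Applying Theorem \ref{theorem3.6-1} directly yields that $\Omega$ is a horoball $D_x(t)$ for some $t \in \mathbb{R}$ and that $u$ is horospherically symmetric in the sense of Definition \ref{Def:HoroSymmetric}. This is exactly the second alternative in the statement of the corollary.

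Since the two cases exhaust the hypothesis on $\partial_\infty \Omega$ and each conclusion matches one of the two alternatives asserted, the corollary follows. There is no real obstacle here; the entire content of the statement has been packaged into the previously proven Theorems \ref{ThLR} and \ref{theorem3.6-1}, and the only small point to remark upon is the slightly degenerate use of Theorem \ref{ThLR} when $\partial_\infty \Omega$ is empty (which is harmless, since the containment $\partial_\infty \Omega \subset E$ is trivial in that case and the moving plane argument in the proof of Theorem \ref{ThLR} reduces, in that regime, to the bounded symmetric case covered by Corollary \ref{theorem3.3-1}).
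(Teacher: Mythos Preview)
Your proof is correct and matches the paper's intended argument: the corollary is stated in the paper without an explicit proof, as an immediate consequence of the preceding results, and your case split---invoking Theorem \ref{ThLR} when $\partial_\infty\Omega=\emptyset$ and Theorem \ref{theorem3.6-1} when $\partial_\infty\Omega=\{x\}$---is exactly the intended derivation. One minor remark: the paper's lead-in sentence cites ``Theorem \ref{ThLR} and Theorem \ref{OneEnd}'', but Theorem \ref{OneEnd} is a non-existence result under the extra hypothesis $\mathbb{P}_1$ and is not what is needed here; your choice of Theorem \ref{theorem3.6-1} for the single-point case is the correct reference.
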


\subsection{Graphical properties of the $f$-extremal domain}

We will assume that our $f$-extremal domain is unbounded, since otherwise Theorem \ref{ThLR} implies that $\Omega $ is a geodesic ball.

Assume that $\Omega$ is an unbounded open connected domain in $\mathbb{H}^{n}$ whose boundary is of class $C^{2}$ and on which the OEP (\ref{1.4}) holds. Let $P$ be an oriented totally geodesic hyperplane which interests $\Omega$. So, $P$ divides $\mathbb{H}^{n}\backslash P$ into two connected components, and these two components are classified to be the interior set, denoted by $\mathrm{int}_{\mathbb{H}^{n}}(P)$, and the exterior set, denoted by $\mathrm{ext}_{\mathbb{H}^{n}}(P)$, of $P$, respectively.

Assume that $P $ intersects $\Omega$. Now, we can give a geometric property for bounded connected components of $\Omega\cap\mathrm{ext}_{\mathbb{H}^{n}}(P)$ or $\Omega\cap\mathrm{int}_{\mathbb{H}^{n}}(P)$ as follows.

\begin{theorem} \label{theorem3.5-1}
Assume that $\Omega\cap\mathrm{ext}_{\mathbb{H}^{n}}(P)$ has a bounded connected component $C$. Then the closure of $\partial{C}\cap\mathrm{ext}_{\mathbb{H}^{n}}(P)$ is a graph over $\partial{C}\cap P$.
\end{theorem}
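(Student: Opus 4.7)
The plan is to run an Alexandrov moving plane argument, exactly mirroring the strategy used in the proof of Theorem~\ref{ThLR}, by foliating $\mathbb{H}^{n}$ with totally geodesic hyperplanes orthogonal to an arbitrary geodesic issued perpendicularly from $P$. Fix a point $p_{0}\in \partial C\cap P$ that lies in $\Omega$ (such points exist because $P$ meets $\Omega$, and for them $u(p_{0})>0$); let $\gamma$ be the complete geodesic perpendicular to $P$ at $p_{0}$, parametrized by arc length so that $\gamma(0)=p_{0}$ and $\gamma(t)\in \mathrm{ext}_{\mathbb{H}^{n}}(P)$ for $t>0$; and let $\{P(t)\}_{t\in \mathbb{R}}$ be the foliation of $\mathbb{H}^{n}$ whose leaf $P(t)$ is the totally geodesic hyperplane orthogonal to $\gamma$ at $\gamma(t)$. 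Then $P(0)=P$, and the reflection $\mathcal{R}_{t}$ across $P(t)$ acts on $\gamma$ by $\gamma(s)\mapsto \gamma(2t-s)$ and sends each leaf $P(s)$ to $P(2t-s)$.

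Since $\overline{C}$ is compact, $t_{0}:=\sup_{p\in \overline{C}}t(p)$ is finite, where $t(p)$ is the foliation parameter of $p$. For $t>t_{0}$ the leaf $P(t)$ misses $\overline{C}$, at $t_{0}$ it first touches $\partial C$, and for $t$ slightly below $t_{0}$ the $C^{2}$-regularity of $\partial \Omega$ ensures that the reflected cap $\tilde{C}(t):=\mathcal{R}_{t}(C\cap \mathrm{ext}_{\mathbb{H}^{n}}(P(t)))$ is contained in $C$. Decreasing $t$, three stopping configurations arise, exactly as in the proof of Theorem~\ref{ThLR}: (A) interior tangency of $\tilde{C}(T^{\ast})$ with $\partial C\cap \partial \Omega$ at a point off $P(T^{\ast})$; (B) orthogonal tangency of $P(T^{\ast})$ with $\partial C\cap \partial \Omega$ at a corner of $\partial \tilde{C}(T^{\ast})\cap P(T^{\ast})$; or (C) the reflected cap touches the base $\partial C\cap P$, which, since $\tilde{C}(t)\subseteq \{t':\, 2t-t_{0}\leqslant t'<t\}$, can happen only at $T^{\ast}=t_{0}/2$. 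Cases (A) and (B) are ruled out by combining Lemma~\ref{lemmaARF} with the maximum principle: $v_{T^{\ast}}:=u\circ \mathcal{R}_{T^{\ast}}$ solves the same quasilinear PDE as $u$, so $w_{T^{\ast}}:=u-v_{T^{\ast}}$ satisfies a linear uniformly elliptic equation on $\tilde{C}(T^{\ast})$ with $w_{T^{\ast}}\geqslant 0$ on $\partial \tilde{C}(T^{\ast})$ and $w_{T^{\ast}}=0$ on the $P(T^{\ast})$-part of the boundary. In case (A) the strong maximum principle, and in case (B) Serrin's corner boundary lemma applied as in the right-angled corner step of Theorem~\ref{ThLR}, force $w_{T^{\ast}}\equiv 0$, whence $\mathcal{R}_{T^{\ast}}(C)=C$ and $u\circ \mathcal{R}_{T^{\ast}}=u$ on $C$. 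Then $\mathcal{R}_{T^{\ast}}(p_{0})\in \partial C\cap P(2T^{\ast})$; since $T^{\ast}>0$ the leaves $P$ and $P(2T^{\ast})$ are disjoint and $\partial C\subseteq \partial \Omega\cup P$, so $\mathcal{R}_{T^{\ast}}(p_{0})\in \partial \Omega$ and $u(\mathcal{R}_{T^{\ast}}(p_{0}))=0\neq u(p_{0})$, a contradiction. Therefore only (C) survives and $\tilde{C}(t)\subset C$ for every $t\in (t_{0}/2,t_{0})$.

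The graph conclusion is then a direct geometric consequence. Restricting the containment to $\gamma$, for every $s\in (t_{0}/2,t_{0}]$ with $\gamma(s)\in \overline{C}$ and every $t\in (t_{0}/2,s)$ one has $\gamma(2t-s)\in \overline{C}$; combined with $\overline{C}\cap \{t<0\}=\emptyset$ this yields $\gamma\cap \overline{C}=\gamma([0,t_{0}])$, a single connected arc. Running the same argument along every perpendicular geodesic to $P$ through a point of $\partial C\cap P$ shows that the orthogonal projection onto $P$ defines a bijection from $\overline{\partial C\cap \mathrm{ext}_{\mathbb{H}^{n}}(P)}$ onto $\partial C\cap P$, establishing the graph statement. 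The main technical obstacle is the exclusion of symmetric stopping at $T^{\ast}>0$, handled through Lemma~\ref{lemmaARF} and Serrin's corner boundary lemma exactly as in Theorem~\ref{ThLR}; once this is in place, the graph property follows from the moving plane containment with no further analytic input.
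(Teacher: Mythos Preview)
Your approach is the same Alexandrov moving-plane strategy as the paper's, and your treatment of the interior and corner tangency cases (your (A) and (B)) correctly mirrors the argument from Theorem~\ref{ThLR}. However, the introduction of case (C) creates a genuine gap.

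The correct containment needed for the moving plane is $\tilde C(t)\subset\Omega$, not $\tilde C(t)\subset C$: the comparison $w_t=u-u\circ\mathcal R_t$ makes sense wherever $u$ is defined, so the reflected cap may safely cross $P$ into $\Omega\cap\mathrm{int}_{\mathbb H^n}(P)$ without obstructing the maximum-principle argument. Thus your case (C)---the reflected cap reaching the base $\partial C\cap P$---is not a stopping configuration at all, and the reflection should be run all the way down to $t=0$, as the paper does. By halting at $t_0/2$ you are left with only partial information, and your deduction that $\gamma\cap\overline C=\gamma([0,t_0])$ does not follow: it requires $\gamma(t_0)\in\overline C$, yet the maximal height $t_0$ is a global quantity for $\overline C$ and is typically attained off the particular geodesic $\gamma$ you fixed through $p_0$. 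Repeating the argument for every perpendicular geodesic does not repair this, since each choice of $p_0$ produces a different foliation $\{P(t)\}$ with its own $t_0$.

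A secondary point: the paper defines ``graph over $P$'' via orbits of a single hyperbolic translation $\mathcal T^\beta$, whereas your conclusion is phrased via orthogonal projection along geodesics perpendicular to $P$. In $\mathbb H^n$ these two families of curves are different (the $\mathcal T^\beta$-orbits are not geodesics except along $\beta$ itself), so even if your argument were completed you would be proving a variant of the stated theorem rather than the theorem as formulated.
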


Before to proceed with the proof, we will explain the meaning of graph in the hyperbolic context.

Fix $x,y \in \mathbb S^{1} \equiv \mathbb H ^n _\infty$ two distinct points at the boundary at infinity. Let $\beta : \mathbb R \to \mathbb H^n$ be the unique geodesic joining $x$ and $y$, i.e., $\beta ^+ =x$ and $\beta ^- = y$. Consider the one parameter family of isometries of $\mathbb H ^n$ given by hyperbolic translations at distance $t$ fixing $\beta$, i.e., $\mathcal T ^{\beta}_t : \mathbb H ^n \to \mathbb H ^n$ such that $\mathcal{T}^\beta _t (\beta ) = \beta$ for all $t \in \mathbb R$. Then, since $\{ \mathcal T^\beta_t  \} _{t \in \mathbb R}$ is a one parameter family of isometries, it induces a unit Killing vector field $X^\beta \in \mathcal{X}(\mathbb H^n) $, Moreover, for any totally geodesic hyperplane $P$ such that $x,y \not \in \partial _\infty P$, $\{ \mathcal T ^{\beta}_t  \} _{t \in \mathbb R}$ induces a foliation by totally geodesic hyperplanes in $\mathbb H^n$ given by $P(t) = \mathcal T ^\beta _t (P)$, $t \in \mathbb R$.

Given a totally geodesic hyperplane $P$, there exists a unique complete geodesic $\beta : \mathbb R \to \mathbb H^n$ such that $X^{\beta}(p)$ is orthogonal to $T_{p} P $ for all $p \in P$.

We say that $\Sigma \subset \mathbb H ^n$ is a graph over $P$ if there exists a connected domain, $K \subset P$, and a function $u : K \to \mathbb R$ such that
$$ \Sigma = \{ \mathcal T^{\beta}_{u(p)} (p) \, : \, \, p \in K\} . $$

\begin{proof}[Proof of Theorem  \ref{theorem3.5-1}]

From the explanation above, for a given totally geodesic hyperplane
$P$ and two distinct points $x$, $y$ at the boundary at infinity, if
 $x,y
\not \in \partial _\infty P$, then for the unique geodesic $\beta$
joining these two points with $\beta ^+ =\beta(+\infty)=x$ and
$\beta ^- =\beta(-\infty)= y$, a foliation $P(t) = \mathcal T ^\beta
_t (P)$, $t \in \mathbb R$, which is orthogonal with $\beta$ for any
$t\in(-\infty,+\infty)$, can be built along $\beta$. Moreover,
$P=P(0)=\mathcal T ^\beta _0 (P)$. Since
$C\subseteq\mathrm{ext}_{\mathbb{H}^{n}}(P)$ and it is bounded,
there exists $t_{0}>0$ such that
\begin{eqnarray*}
P(t)\cap C=\emptyset, \qquad \mathrm{for~all}~t\geqslant t_0.
\end{eqnarray*}
Hence, decreasing $t$ we will find some $\bar t$ which is a first
moment such that $P(\bar t)\cap\bar C\neq\emptyset$ and
$P(t)\cap\bar C=\emptyset$ for any $t>\bar t$. Therefore, since
$\bar C$ is compact, there exists $\epsilon>0$ such that $\partial
C_{t}^{+}:=\partial{C}\cap\mathrm{ext}_{\mathbb{H}^{n}}(P)$ is a
graph over $P(t)$, $t\in(\bar t,\bar t+\epsilon)$. This \emph{claim}
follows from the Alexandrov reflection technique introduced in Theorem
\ref{ThLR}. Let us explain this. As we did in Theorem \ref{ThLR},
define
\begin{eqnarray*}
&&C_{t}^{+}:=\partial{C}\cap\mathrm{ext}_{\mathbb{H}^{n}}(P),\\
&&C_{t}^{-}:=\partial{C}\cap\mathrm{int}_{\mathbb{H}^{n}}(P),\\
&&\tilde{C}_{t}^{+}=\mathcal{R}_{t}(C_{t}^{+}),
\end{eqnarray*}
where $\mathcal{R}_{t}$ is the reflection through $P(t)$. Hence,
there exists $\epsilon>0$ small enough such that
\begin{eqnarray*}
\tilde{C}_{t}^{+}\subset C_{t}^{-}, \qquad \forall{t}\in(\bar
t-\epsilon,\bar t),
\end{eqnarray*}
which implies that that $\partial C_{t}^{+}$ is a graph, in the
sense defined above, over $P(t)$. Now, decreasing $t$ up to $0$.
Note that \emph{if $\tilde{C}_{t}^{+}\subset C_{t}^{-}$ for any
$t\in(0,\bar t]$, then
$\partial{C}\cap\mathrm{ext}_{\mathbb{H}^{n}}(P)$ will be a graph
over $P$ and the proof finishes}. Assume this is not true, then
following the ideas in Theorem \ref{ThLR} two situations could
happen:

(1) There exists $t'\in(0,\bar t)$ such that $P( t')$ is orthogonal to $\partial C$ at some point $q\in P(t ')\cap\partial C$;

(2) There exists $t'\in(0,\bar t)$ such that $\partial\tilde{C}_{t'}^{+}$ is internally tangent to $\partial
C_{t'}^{-}$.

In any of the above two cases, applying the maximum principle,
either at the boundary or at the interior, as we did in Theorem
\ref{ThLR}, we will obtain that $C$ is symmetric w.r.t. the totally
geodesic hyperplane $P(t ')$. But this is impossible.

Therefore, $\partial{C}\cap\mathrm{ext}_{\mathbb{H}^{n}}(P)$ is a
graph over $\partial{C}\cap P$.
\end{proof}

Moreover,  Theorem \ref{theorem3.5-1} and its proof let us claim the following four conclusions.

\begin{corollary}
 $C\cap P$ is connected.
\end{corollary}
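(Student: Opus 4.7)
The plan is to interpret $C\cap P$ as the footprint $\overline{C}\cap P$, which coincides with $\partial C \cap P$ since $C\subset \mathrm{ext}_{\mathbb{H}^{n}}(P)$, and then argue by contradiction exploiting the graph structure provided by Theorem \ref{theorem3.5-1}. First I would restate the conclusion of Theorem \ref{theorem3.5-1} in parametric form: setting $B:=\overline{C}\cap P$ and $G:=\overline{\partial C\cap \mathrm{ext}_{\mathbb{H}^{n}}(P)}$, the projection along the Killing flow $\{\mathcal{T}^{\beta}_t\}_{t\in \mathbb{R}}$ associated to the geodesic $\beta$ orthogonal to $P$ is a homeomorphism $\pi:G\to B$. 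Equivalently, there is a continuous height function $h:B\to[0,+\infty)$, strictly positive on $\mathrm{int}(B)$ and vanishing on $\partial B$, with
\begin{equation*}
G=\{\mathcal{T}^{\beta}_{h(p)}(p)\,:\,p\in B\}.
\end{equation*}

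Next I would use a Jordan--Brouwer-type separation argument to identify the bounded connected component $C$ of $\Omega\cap\mathrm{ext}_{\mathbb{H}^{n}}(P)$, whose boundary is $\partial C=B\cup G$, as the open solid below the graph:
\begin{equation*}
C=\{\mathcal{T}^{\beta}_t(p)\,:\,p\in \mathrm{int}(B),\ 0<t<h(p)\}.
\end{equation*}
Since $\{\mathcal{T}^{\beta}_t\}$ is a one-parameter group of isometries (hence globally injective), this description is faithful and $C$ is homeomorphic to $\{(p,t)\,:\,p\in\mathrm{int}(B),\ 0<t<h(p)\}\subset P\times \mathbb{R}$.

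Finally I would conclude by contradiction. Suppose $B=B_1\sqcup B_2$ with $B_1,B_2$ nonempty, closed and disjoint in $P$. Then $\mathrm{int}(B)=\mathrm{int}(B_1)\sqcup \mathrm{int}(B_2)$, and the parametrization above decomposes $C$ as the disjoint union $C_1\sqcup C_2$, where
\begin{equation*}
C_i=\{\mathcal{T}^{\beta}_t(p)\,:\,p\in\mathrm{int}(B_i),\ 0<t<h(p)\},\qquad i=1,2,
\end{equation*}
are open and nonempty. This contradicts the connectedness of $C$, so $B$ must be connected. The main obstacle I anticipate is the rigorous identification of $C$ with the ``open region strictly below $G$ over $\mathrm{int}(B)$'': Theorem \ref{theorem3.5-1} supplies $\partial C = B\cup G$ as a topological hypersurface, but to recognize $C$ as the distinguished bounded side of $\partial C$ one must carefully invoke properness of $\partial C$ together with the fact that $C$ is a bounded connected component of $\Omega\cap\mathrm{ext}_{\mathbb{H}^{n}}(P)$. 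Once this parameterization is secured, the corollary follows immediately from the injectivity of the flow.
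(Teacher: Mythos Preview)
Your approach is correct and aligns with the paper's intent: the corollary is stated without proof, merely as a consequence of Theorem \ref{theorem3.5-1} and its proof, and the natural way to extract it is precisely through the graph structure you describe. One minor simplification you might consider: instead of arguing by contradiction, observe that the flow projection $\pi:\overline{C}\to P$ along $\{\mathcal T^\beta_t\}$ is continuous and, by the graph property, maps the connected set $\overline{C}$ onto $B=\overline{C}\cap P$; connectedness of $B$ then follows immediately as the continuous image of a connected set, bypassing the need to explicitly identify $C$ as the open subgraph region.
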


\begin{corollary}  \label{corollary3.11-1}
The closure of $\partial{C}\cap\mathrm{ext}_{\mathbb{H}^{n}}(P)$ is not orthogonal to $P$ at any point in the sense of the hyperbolic metric $g_{-1}$.
\end{corollary}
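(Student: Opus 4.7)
I plan to argue by contradiction, assuming that $\partial C \cap \mathrm{ext}_{\mathbb{H}^{n}}(P)$ meets $P$ orthogonally (in $g_{-1}$) at some point $q$, which must then lie in $\partial C \cap P$. The strategy is to reduce the configuration to case (C) in the proof of Theorem \ref{ThLR}, where the corner version of Serrin's boundary point lemma supplied the contradiction.

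First, I would form the reflected function $v(p) := u(\mathcal{R}_{P}(p))$ on $\mathcal{R}_{P}(C)$; by Lemma \ref{lemmaARF}, $v$ solves the same PDE as $u$ and matches its Dirichlet and Neumann data along the reflected portion of the boundary. The graphicality already established in Theorem \ref{theorem3.5-1} yields $\overline{\mathcal{R}_{P}(C)} \subseteq \bar{\Omega}$ in a neighborhood of $P$, so $\mathcal{R}_{P}(C)$ and $C$ fit together along $P$ inside $\bar{\Omega}$, and the orthogonality hypothesis makes $q$ a right-angled corner of $\overline{\mathcal{R}_{P}(C)} \cup \bar{C}$ at which the original and reflected copies of $\partial \Omega$ meet. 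I would then choose an isometry of $\mathbb{H}^{n}$ sending $P$ to $\{x_{n} = 0\}$ and the inward normal to $\partial \Omega$ at $q$ to the $x_{n}$-direction, exactly as in case (C) of the proof of Theorem \ref{ThLR}. Writing $\partial \Omega$ near $q$ as a $C^{2}$ graph $x_{n} = \varphi(x_{1}, \dots, x_{n-1})$, the same sequence of differentiations of $u = 0$, of $\langle \nabla u, \vec{v} \rangle_{\mathbb{H}^{n}} = \alpha$ and of $Qu = 0$ carried out there will show that all first- and second-order partial derivatives of $u$ and $v$ agree at $q$, so the $2$-jet of $w := u - v$ vanishes at $q$.

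Finally, I would apply Serrin's corner boundary point lemma \cite[Lemma 2]{s} to $w$ at the right-angled corner $q$: it forces either $\partial w / \partial s(q) > 0$ or $\partial^{2} w / \partial s^{2}(q) > 0$ for some direction $s$ entering $\mathcal{R}_{P}(C)$ non-tangentially, which contradicts the vanishing of the $2$-jet of $w$ at $q$. The main delicate point, as in case (C) of Theorem \ref{ThLR}, will be securing $\partial^{2} \varphi / \partial x_{i}^{2}(q) = 0$ for $1 \leqslant i \leqslant n-1$; this follows from the $C^{2}$ regularity of $\partial \Omega$ combined with the one-sided containment $\overline{\mathcal{R}_{P}(C)} \subseteq \bar{\Omega}$ near $q$ via Taylor's theorem with remainder, precisely as in the original argument, so no new analytic ingredient is required beyond transplanting the corner analysis from Theorem \ref{ThLR} to the single plane $P$ at $t = 0$.
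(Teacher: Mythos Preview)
Your argument has a genuine gap at the final step. Serrin's corner lemma \cite[Lemma~2]{s} does not unconditionally force $\partial w/\partial s(q)>0$ or $\partial^{2}w/\partial s^{2}(q)>0$: its conclusion is that \emph{either} $w\equiv 0$ on $\mathcal R_{P}(C)$ \emph{or} one of those derivatives is strictly positive. Your computation of the vanishing $2$-jet only rules out the latter alternative, so you are left with $w\equiv 0$, i.e.\ $u\equiv u\circ\mathcal R_{P}$ on $\mathcal R_{P}(C)$. Nothing you have written excludes this case, and in fact it cannot be excluded by the corner analysis alone.

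This missing case is precisely where the paper's argument lives. If $w\equiv 0$ then $u=0$ on $\mathcal R_{P}\bigl(\partial C\cap\mathrm{ext}_{\mathbb H^{n}}(P)\bigr)$, so this reflected arc lies in $\partial\Omega$; together with $\partial C\cap\mathrm{ext}_{\mathbb H^{n}}(P)\subset\partial\Omega$ one obtains that $C\cup(\partial C\cap P\cap\Omega)\cup\mathcal R_{P}(C)$ is open and closed in the connected domain $\Omega$, whence $\Omega$ is symmetric with respect to $P$ and equals this bounded set. That contradicts the standing hypothesis (stated at the start of Subsection~3.3) that $\Omega$ is unbounded. The paper compresses all of this into one line: orthogonality would force $\Omega$ to be symmetric about $P$, contradicting unboundedness. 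Your elaborate transplant of the case~(C) computations from Theorem~\ref{ThLR} is correct as far as it goes, but you must append this symmetry/connectedness/unboundedness argument to close the proof. A smaller point: you also need $w\geqslant 0$ on all of $\mathcal R_{P}(C)$ (not just near $P$) to invoke the corner lemma; this follows from letting $t\to 0$ in the inequality $w_{t}>0$ established in the proof of Theorem~\ref{theorem3.5-1}, and is essentially the content of Corollary~\ref{corollary3.12-1}.
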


\begin{proof}
If the closure $\partial{C}\cap\mathrm{ext}_{\mathbb{H}^{n}}(P)$ were orthogonal to $P$, then $\Omega$ is symmetric w.r.t. $P$, which contradicts the fact that $\Omega$ is unbounded.
\end{proof}

\begin{corollary} \label{corollary3.12-1}
 If $C'$ is the reflection of $C$ through $P$, then the closure of $C\cup{C'}$ stays within $\overline{\Omega}$.
\end{corollary}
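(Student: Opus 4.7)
The plan is to extract the conclusion directly from the moving--plane argument carried out inside the proof of Theorem \ref{theorem3.5-1} and to pass to the limit as the moving hyperplane reaches $P$.

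First, I retain the foliation $\{P(t)\}_{t\in\mathbb R}$ of $\mathbb H^n$ by totally geodesic hyperplanes orthogonal to the geodesic $\beta$ perpendicular to $P$, normalized so that $P(0)=P$ and so that $C\subset\mathrm{ext}_{\mathbb H^n}(P)$ corresponds to the region $\{t>0\}$. Since $C$ is bounded, there is a first contact parameter $\bar t>0$ with $P(t)\cap\overline C=\emptyset$ for all $t>\bar t$ and $P(\bar t)\cap\overline C\neq\emptyset$. For each $t\in(0,\bar t]$ I set
$$V_t:=C\cap\mathrm{ext}_{\mathbb H^n}(P(t)),\qquad \widetilde V_t:=\mathcal R_t(V_t)\subset\mathrm{int}_{\mathbb H^n}(P(t)),$$
where $\mathcal R_t$ denotes the reflection through $P(t)$.

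Second, I invoke the outcome of the moving--plane argument from the proof of Theorem \ref{theorem3.5-1}: the hyperplane $P(t)$ is never stopped as $t$ descends from $\bar t$ down to $0$, i.e.\ $\widetilde V_t\subset\Omega$ for every $t\in(0,\bar t]$. Indeed, any interior tangency or corner (orthogonal) contact of $\partial\widetilde V_t$ with $\partial\Omega$ would force, via the interior strong maximum principle or Serrin's corner lemma applied to $u-u\circ\mathcal R_t$, the existence of a totally geodesic hyperplane $P(t')$ with $0<t'<\bar t$ of symmetry for $\Omega$, which is incompatible with the structure of $\Omega$ exactly as in the proofs of Theorem \ref{theorem3.5-1} and Corollary \ref{corollary3.11-1}.

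Third, I pass to the limit $t\to 0^+$. Any point $q\in C'$ has the form $q=\mathcal R_0(p)$ with $p\in C$; writing $s>0$ for the $\beta$-coordinate of $p$, for every $t\in(0,s)$ one has $p\in V_t$ and hence $\mathcal R_t(p)\in\widetilde V_t\subset\Omega$. Since $\mathcal R_t(p)\to\mathcal R_0(p)=q$ as $t\to 0^+$ and $\overline\Omega$ is closed, we obtain $q\in\overline\Omega$. Thus $C'\subset\overline\Omega$, and combining with the trivial inclusion $C\subset\Omega\subset\overline\Omega$ yields $\overline{C\cup C'}\subset\overline\Omega$, as desired.

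The main obstacle is the limiting step at $t=0$: one must rule out degeneration of the reflected caps $\widetilde V_t$ at the interface $\partial C\cap P$. This is handled by Corollary \ref{corollary3.11-1}, which asserts that $\overline{\partial C\cap\mathrm{ext}_{\mathbb H^n}(P)}$ is not orthogonal to $P$ at any point; the reflected graph therefore joins transversally to $\partial C\cap P$, and no pathological collapse of the reflected caps occurs as the moving hyperplane arrives at $P$.
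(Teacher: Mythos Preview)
Your proof is correct and follows the approach the paper has in mind: the corollary is stated without proof, as an immediate consequence of the moving--plane argument in Theorem \ref{theorem3.5-1}, and you have made that deduction explicit by carrying the reflected caps $\widetilde V_t\subset\Omega$ to the limit $t\to 0^+$.

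One minor remark: your final paragraph about a ``main obstacle'' at the interface $\partial C\cap P$ is unnecessary. Since you only need $C'\subset\overline\Omega$ (not $C'\subset\Omega$), the limiting step requires nothing beyond the continuity $\mathcal R_t(p)\to\mathcal R_0(p)$ and the closedness of $\overline\Omega$; no appeal to Corollary \ref{corollary3.11-1} or to transversality is needed at this stage. The non--orthogonality of Corollary \ref{corollary3.11-1} is what rules out the moving plane being stopped strictly before $t=0$, and that is already absorbed into your second step.
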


\begin{corollary}  \label{corollary3.13-1}
Let $\Omega$ be a $f$-extremal domain of the OEP (\ref{1.4}) satisfying the property
\begin{quote}
{\bf $\mathbb{P}_{2}:$}  There exists a constant $R$ such that $\overline{\Omega}$ does not contain any closed ball of radius $R$.
\end{quote}

Then it is impossible to construct a half-ball of radius $R$ centered at some point in $\partial{C}\cap P$ and staying within $C$.
\end{corollary}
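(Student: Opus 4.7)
The plan is to derive the contradiction by reflecting the hypothetical half-ball through $P$ and invoking Corollary \ref{corollary3.12-1}, which guarantees that the full ball obtained this way remains inside $\overline{\Omega}$, violating property $\mathbb{P}_{2}$.

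More concretely, I would argue by contradiction. Suppose there exists $q\in \partial C \cap P$ and a half-ball $B^{+}$ of radius $R$, centered at $q$, with $B^{+}\subset \overline{\mathrm{ext}}_{\mathbb{H}^{n}}(P)$, such that $B^{+}\subset \overline{C}$. Let $\mathcal{R}_{P}$ be the reflection through the totally geodesic hyperplane $P$, and set $B^{-}:=\mathcal{R}_{P}(B^{+})\subset \overline{\mathrm{int}}_{\mathbb{H}^{n}}(P)$. Since $\mathcal{R}_{P}$ is an isometry of $\mathbb{H}^{n}$ fixing $q$, the set $B^{+}\cup B^{-}$ is precisely the closed geodesic ball of radius $R$ centered at $q$ in $\mathbb{H}^{n}$.

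The next (and essentially the only substantive) step is to observe that $B^{-}$ lies inside $\overline{\Omega}$. Indeed, by Corollary \ref{corollary3.12-1}, the closure of $C\cup C'$ is contained in $\overline{\Omega}$, where $C'=\mathcal{R}_{P}(C)$. Since $B^{+}\subset \overline{C}$, we get
\begin{eqnarray*}
B^{-}=\mathcal{R}_{P}(B^{+})\subset \mathcal{R}_{P}(\overline{C})=\overline{C'}\subset \overline{\Omega}.
\end{eqnarray*}
Combined with $B^{+}\subset \overline{C}\subset \overline{\Omega}$, we conclude that the closed geodesic ball $B^{+}\cup B^{-}$ of radius $R$ is entirely contained in $\overline{\Omega}$, contradicting property $\mathbb{P}_{2}$.

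The only point that requires care, rather than being truly hard, is to ensure that the reflection argument produces a genuine geodesic ball in $\mathbb{H}^{n}$; this is where the totally geodesic character of $P$ enters, since reflections through totally geodesic hyperplanes are isometries and send metric balls centered on $P$ to themselves. Given Corollaries \ref{corollary3.11-1} and \ref{corollary3.12-1}, which in turn rely on Theorem \ref{theorem3.5-1} and the Alexandrov reflection technique of Theorem \ref{ThLR}, no additional machinery is needed; the corollary is an immediate packaging of these facts together with $\mathbb{P}_{2}$.
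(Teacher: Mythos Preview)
Your proof is correct and follows essentially the same approach as the paper: argue by contradiction, reflect the half-ball through $P$, invoke Corollary \ref{corollary3.12-1} to place the resulting full geodesic ball inside $\overline{\Omega}$, and contradict property $\mathbb{P}_{2}$. Your write-up is more detailed than the paper's (which dispatches the argument in two sentences), but the idea is identical.
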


\begin{proof}
Suppose it were possible to construct a half-ball of radius $R$ centered at some point in $\partial{C}\cap P$ and staying within $C$. Then, by Corollary \ref{corollary3.12-1}, the closure of $C\cup{C'}$ would contain a closed ball of radius $R$ centered at some point in $\partial{C}\cap P$, which contradicts property $\mathbb{P}_{2}$. Therefore, our assumption is not true.
\end{proof}


By Lemma \ref{proposition1}, we know that for the OEP (\ref{1.3}), if $f$ satisfies the property $\mathbb{P}_{1}$, then its $f$-extremal domain $\Omega$ has the property $\mathbb{P}_{2}$. Together with Corollary \ref{corollary3.13-1}, we can easily get the following.

\begin{corollary}
Let $\Omega$ be a $f$-extremal domain of the OEP (\ref{1.3}) and assume that $\Omega\cap\mathrm{ext}_{\mathbb{H}^{n}}(P)$ has a bounded connected component $C$.

If the function $f$ in the OEP (\ref{1.3}) satisfies the property $\mathbb{P}_{1}$, then it is impossible to construct a half-ball of radius $R_{\lambda,n}$, which is determined by (\ref{2.2}), centered at some point in $\partial{C}\cap P$ and staying within $C$.
\end{corollary}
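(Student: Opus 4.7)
The plan is to observe that this statement is an immediate consequence of the two previous results, so the work is essentially bookkeeping rather than new geometric input. First, I would invoke Lemma \ref{proposition1}: since $f$ satisfies the property $\mathbb{P}_1$ with constant $\lambda > \frac{(n-1)^2 k}{4}$, either $\Omega$ coincides with a geodesic ball of radius $R_{\lambda,n}$, or the closure $\overline{\Omega}$ contains no closed geodesic ball of radius $R_{\lambda,n}$. In the first case, $\Omega$ is itself a geodesic ball of radius exactly $R_{\lambda,n}$, and in that situation the symmetry forces $\Omega \cap \mathrm{ext}_{\mathbb{H}^n}(P)$ and its reflection through $P$ to be congruent, so no half-ball of radius $R_{\lambda,n}$ centered at a point of $\partial C \cap P$ can fit inside $C$ (its double would exhaust or exceed $\Omega$).

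In the generic case where $\overline{\Omega}$ contains no closed ball of radius $R_{\lambda,n}$, I would simply observe that this is exactly property $\mathbb{P}_2$ of Corollary \ref{corollary3.13-1} with the constant $R$ taken to be $R_{\lambda,n}$. Applying Corollary \ref{corollary3.13-1} directly yields the desired conclusion: no half-ball of radius $R_{\lambda,n}$ centered at a point of $\partial C \cap P$ can be contained in $C$, since otherwise Corollary \ref{corollary3.12-1} would let us reflect it through $P$ to produce a full closed ball of radius $R_{\lambda,n}$ inside $\overline{C \cup C'} \subset \overline{\Omega}$, contradicting $\mathbb{P}_2$.

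The main obstacle, if any, is purely verifying that the borderline case where $\Omega$ itself is a ball of radius $R_{\lambda,n}$ does not cause trouble; but this case is geometrically degenerate because $\partial C \cap P$ is a point at most in the critical geodesic ball situation (or $\Omega$ is bounded and one is not really in the graphical regime underlying Corollary \ref{corollary3.13-1}). Hence the proof reduces to the two-line argument: $\mathbb{P}_1 \Rightarrow \mathbb{P}_2$ with $R = R_{\lambda,n}$ by Lemma \ref{proposition1}, and then Corollary \ref{corollary3.13-1} closes the argument.
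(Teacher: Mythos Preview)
Your proposal is correct and follows essentially the same approach as the paper: invoke Lemma~\ref{proposition1} to pass from property $\mathbb{P}_1$ to property $\mathbb{P}_2$ with $R = R_{\lambda,n}$, and then apply Corollary~\ref{corollary3.13-1}. The only difference is that your discussion of the borderline case (where $\Omega$ itself is a geodesic ball of radius $R_{\lambda,n}$) is unnecessary here, since Section~3.3 opens with the standing assumption that $\Omega$ is unbounded, which already excludes that alternative in the dichotomy of Lemma~\ref{proposition1}.
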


\subsection{Concluding remarks}

It is interesting to highlight here the similarities between OEP in $\mathbb H^n$, properly embedded CMC-hypersurfaces and the singular Yamabe Problem.

We already have pointed out that Theorem \ref{ThLR} is the OEP counterpart of the Levitt-Rosenberg's Theorem \cite{LR} for properly embedded CMC-hypersurfaces. From the point of view of the singular Yamabe Problem, V. Bonini, J.M. Espinar and J. Qing extended this for the fully nonlinear elliptic singular Yamabe Problems (cf \cite[Main Theorem B]{BEQ}).

For the sake of clarity, we will explain here what we mean for {\it fully nonlinear elliptic singular Yamabe Problems}. First, we introduce the conformally invariant elliptic PDE in the context of our discussions.  Denote
$$
{\cal C} := \{(x_1, \cdots , x_n) \in \mathbb R ^n : x_i < 1/2, i = 1, \cdots, n\}
$$
and
$$
\Gamma_n := \{(x_1,\cdots, x_n ): x_i >0, i=1, 2, \cdots, n\}.
$$
Consider a symmetric function $f(x_1, \cdots, x_n)$ of $n$-variables with $f(\lambda_0, \lambda_0, \cdots, \lambda_0) = 0$ for some number $\lambda_0 < \frac 12$ and
$$
\Gamma = \ \text{an open connected component of }\{(x_1, \cdots, x_n): f(x_1, \cdots, x_n) > 0\}
$$
satisfying

\begin{itemize}
\item [(1)] $ (\lambda, \lambda, \cdots, \lambda) \in \Gamma \cap {\cal C}, \forall \ \lambda \in (\lambda_0, \frac 12) $,

\item [(2)] $\forall \ (x_1, \cdots, x_n)\in \Gamma\cap{\cal C}$, $\forall \ (y_1, \cdots, y_n)\in\Gamma\cap{\cal C}\cap ((x_1, \cdots, x_n) + \Gamma_n)$, $\exists$ a curve $\gamma$ connecting $(x_1, \cdots, x_n)$ to $(y_1, \cdots, y_n)$ inside $\Gamma\cap{\cal C}$ such that $\gamma' \in \Gamma_n$ along $\gamma$,

\item [(3)] $f \in C^1(\Gamma)$  and  $\frac{\partial f }{ \partial x_i} >0$ in $\Gamma$.
\end{itemize}

Suppose $g = e^{2\rho}g_{0}$ is a complete conformal metric on a domain $\Omega$ of $(\mathbb S^n , g_0)$ satisfying
\begin{equation}\label{EqYamabe}
\begin{matrix}
\hspace{4cm} & f(\lambda _1 , \ldots , \lambda _n) = C  \ \text{and} \ (\lambda _1 , \ldots , \lambda _n) \in \Gamma\cap \mathcal C \ \text{in} \ \Omega, & \hspace{3cm}
\end{matrix}
\end{equation}
for some nonnegative constant $C$, where $(\lambda _1 , \ldots , \lambda _n)$ is the set of eigenvalues of the Schouten curvature tensor of the metric $g$. We refer to equation \eqref{EqYamabe} as the conformally invariant elliptic problem of the conformal metrics on the domain $\Omega$. In particular, taking $f(\lambda _1 , \ldots , \lambda _n) = \lambda _1 + \cdots + \lambda _n$, we obtain the classical singular Yamabe Problem.

Hence, this shows the intimate relationship between Theorem \ref{ThLR}, Levitt-Rosenberg's Theorem \cite{LR} and Bonini-Espinar-Qing's Theorem \cite{BEQ}. Moreover, Levitt-Rosenberg's Theorem has two fundamental consequences in the theory:

\begin{quote}
{\bf Lawson-do Carmo Theorem \cite{dCL}:} {\it The only properly embedded hypersurface $\Sigma \subset \mathbb H^n$ of CMC $H\geq 1$ whose boundary at infinity is a single point is the horosphere. In particular, $H=1$. In other words, there is no properly embedded hypersurface with CMC $H>1$ in $\mathbb H^n$ whose boundary at infinity is a single point.}
\end{quote}

And

\begin{quote}
{\bf Hsiang Theorem \cite{Hs}: }{\it Let $\Sigma \subset \mathbb H ^n$ be a properly embedded CMC hypersurface whose boundary at infinity consist in two distinct points. Then, $\Sigma$ is invariant by the one parameter group of rotations in $\mathbb H^n$ fixing $x$ and $y$.}
\end{quote}

In \cite{BEQ}, the authors obtained the analogous results to the do Carmo-Lawson's Theorem and Hsiang's Theorem in the context of fully nonlinear singular Yamabe Problems. For OEP, Theorem \ref{theorem3.6-1}, Theorem \ref{OneEnd} and Theorem \ref{ThDelaunay} give us the analogous results.

Hence, this suggests that:

\begin{quote}
{\it Any problem for either OEP in $\mathbb H^n$, CMC-hypersurfaces in $\mathbb H^n$ or Singular Yamabe Problems in $\mathbb S ^n$ has a counterpart in each category.}
\end{quote}

\section{Berestycki-Caffarelli-Nirenberg Conjecture in $\mathbb H^{2}$}

From now on in this section, we will focus on the two dimensional case, $\Omega \subset \mathbb H^2$. Here, we will work on the Poincar\'{e} ball model of $\mathbb H ^2$, that is, $(\mathbb{D},g_{-1})=\mathbb{H}^{2}$.

Let $\Omega$ be an unbounded open connected domain in $\mathbb{H}^{2}$, with a boundary of class $C^{2}$, on which the OEP (\ref{1.4}) has a solution $u\in{C}^{2}(\overline{\Omega})$. Moreover, we assume that $\Omega$ has the property $\mathbb{P}_{2}$ mentioned in Corollary \ref{corollary3.13-1}. Under this conditions, we can prove the Berestycki-Caffarelli-Nirenberg Conjecture in $\mathbb H ^2$.

\begin{theorem}\label{ThBCNP2}
Let $\Omega \subset \mathbb H^2$ a domain with properly embbeded $C^2$ connected boundary such that $\mathbb H^2 \setminus \overline{\Omega}$ is connected. If there exists a (strictly) positive function $u\in{C}^{2}(\Omega)$ that solves (\ref{1.4}). If $\Omega$ has the property $\mathbb{P}_{2}$ mentioned in Corollary \ref{corollary3.13-1}, then $\Omega$ must be a geodesic ball and $u$ is radially symmetric.
\end{theorem}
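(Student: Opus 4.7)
The plan is to split the proof into bounded and unbounded cases. If $\Omega$ is bounded, then since $\partial \Omega$ is a connected properly embedded $C^{2}$ curve in $\mathbb H^{2}$, the domain $\overline \Omega$ is a topological disk and Corollary \ref{theorem3.3-1} immediately gives that $\Omega$ is a geodesic ball with $u$ radially symmetric. The bulk of the work is to rule out the unbounded case.

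Assume $\Omega$ is unbounded; then $\partial \Omega$ is a properly embedded $C^{2}$ line (connected boundary, connected complement). I would first observe that the proof of Lemma \ref{horosphere} uses only that $\Omega$ cannot contain arbitrarily large balls --- which is exactly the content of $\mathbb{P}_{2}$. Hence the same argument gives $\mathrm{int}(\partial_{\infty} \Omega) = \emptyset$ in $\mathbb S^{1}$, so $\mathbb S^{1} \setminus \partial_{\infty} \Omega$ is dense.

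Next, for each $x \in \mathbb S^{1} \setminus \partial_{\infty} \Omega$, I would run the Alexandrov moving plane method along the foliation $\{P(t)\}_{t \in \mathbb R}$ of $\mathbb H^{2}$ by geodesics perpendicular to a geodesic $\beta$ with $\beta(+\infty) = x$. Since $x \notin \partial_\infty \Omega$, there exists $T$ with $P(t) \cap \overline \Omega = \emptyset$ for $t \geq T$. As $t$ decreases, I would invoke Lemma \ref{lemmaARF} (so that the reflected function $v_{t} = u \circ \mathcal R_{t}$ satisfies the same PDE) and apply the maximum principle, Hopf's boundary lemma, and Serrin's corner lemma exactly as in the proof of Theorem \ref{ThLR}, obtaining the usual dichotomy: either the reflected caps $\tilde \Omega_{t}^{-} \subseteq \Omega_{t}^{-}$ for every $t$, or there exists $\bar t$ such that $P(\bar t)$ is a hyperplane of symmetry for $\Omega$. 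The first alternative would be ruled out using property $\mathbb{P}_{2}$ via the graphical property (Theorem \ref{theorem3.5-1}) and the half-ball obstruction (Corollary \ref{corollary3.13-1}): since $\Omega$ has bounded inradius, the moving plane cannot shift through $\Omega$ indefinitely without an obstruction, forcing a symmetry $P(\bar t)$ to appear.

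Thus $\Omega$ would admit reflection symmetries about geodesics in a dense family of directions. Let $\mathbb G \subset \mathrm{Iso}(\mathbb H^{2})$ denote the closure of the group generated by these reflections. Since $\Omega$ has bounded inradius (by $\mathbb{P}_{2}$) and connected boundary (which rules out translation-invariant equidistant tubes, whose boundary has two components), $\mathbb G$ has bounded orbits and is therefore compact. By the center-of-mass argument from the proof of Theorem \ref{ThLR}, $\mathbb G$ has a fixed point $m \in \mathbb H^{2}$, and since the family of symmetry geodesics is dense, $\mathbb G$ contains all rotations about $m$. Hence $\Omega$ is a geodesic ball centered at $m$, contradicting unboundedness; the bounded case then applies. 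The main obstacle is carrying out the moving plane argument in the third paragraph without the cleaner structure at infinity assumed in Theorem \ref{ThLR}: one must carefully combine $\mathbb{P}_{2}$ with the graphical property and the half-ball obstruction to guarantee that the scheme terminates at a symmetry, rather than allowing the moving plane to run indefinitely or to produce unbounded reflected caps on which the maximum principle would not directly apply.
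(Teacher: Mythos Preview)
Your approach is genuinely different from the paper's and substantially more complicated, and the obstacle you flag in your final paragraph is real and unresolved.

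The paper's proof is a short three–step argument that avoids running the moving plane in arbitrary directions altogether. First, the two-dimensional topology does much more than you use: since $\partial\Omega$ is a single properly embedded $C^{2}$ curve and $\mathbb H^{2}\setminus\overline{\Omega}$ is connected, $\partial_{\infty}\Omega$ is either empty or a single closed arc of $\mathbb S^{1}$, hence \emph{connected}. Second, as you correctly observe, the proof of Lemma~\ref{horosphere} only needs property $\mathbb{P}_{2}$, so $\mathrm{int}(\partial_{\infty}\Omega)=\emptyset$. A connected subset of $\mathbb S^{1}$ with empty interior is a single point (or empty). Third, if $\partial_{\infty}\Omega=\emptyset$, Theorem~\ref{ThLR} gives the ball; if $\partial_{\infty}\Omega$ is a single point, Theorem~\ref{theorem3.6-1} forces $\Omega$ to be a horoball, and a horoball contains geodesic balls of every radius, contradicting $\mathbb{P}_{2}$.

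The step you are missing is precisely the passage ``connected with empty interior $\Rightarrow$ a single point''. Once $\partial_{\infty}\Omega$ is reduced to a point, Theorem~\ref{theorem3.6-1} applies directly and there is no need to produce symmetries by hand. Your strategy of running Theorem~\ref{ThLR}'s moving-plane scheme for each $x\notin\partial_{\infty}\Omega$ faces exactly the difficulty you name: for a generic such $x$ the caps $\Omega_{t}^{-}$ need not be relatively compact (this is what $\partial_{\infty}\Omega\subset E$ guarantees in Theorem~\ref{ThLR}), so the maximum principle and Serrin's corner lemma are not immediately available. Your invocation of Theorem~\ref{theorem3.5-1} and Corollary~\ref{corollary3.13-1} does not bridge this gap, since both of those results presuppose a \emph{bounded} component $C$ on which to work. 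Likewise, the compactness of the group $\mathbb G$ you build is not justified: bounded inradius does not by itself give bounded orbits for an unbounded $\Omega$.
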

\begin{proof}
In $\mathbb H^2$, if $\partial _\infty \Omega$ has more than two components, then $\mathbb H ^2 \setminus \overline{\Omega}$ disconnects $\mathbb H^2$. Hence, $\partial _\infty \Omega$ has either only one component or none.

If $\partial _\infty \Omega = \O$, then Theorem \ref{ThLR} implies that $\Omega$ is a geodesic ball and $u$ is radially symmetric.

If $\partial_\infty \Omega$ has one component, such a component must be a single point by Lemma \ref{horosphere}. Thus, Theorem \ref{ThLR} would imply that $\Omega$ is a horoball. Then $\Omega$ being a horoball will contradict Theorem \ref{ThBoundary}. This finishes the proof.
\end{proof}

In particular, if the OEP (\ref{1.4}) is replaced by the OEP (\ref{1.3}), and the function $f$ in (\ref{1.3}) has the property $\mathbb{P}_{1}$, then by Proposition \ref{proposition1} we know that the $f$-extremal domain $\Omega$ of the OEP (\ref{1.3}) has the
property $\mathbb{P}_{2}$, which implies that Theorem \ref{ThBCNP2}
still holds in this replacement.

\begin{theorem}\label{BCNConjecture}
Let $\Omega \subset \mathbb H^2$ a domain with properly embbeded connected $C^2$ connected boundary such that $\mathbb H^2 \setminus \overline{\Omega}$ is connected. If there exists a (strictly) positive function $u\in{C}^{2}(\Omega)$ that solves the equation
\begin{eqnarray*}
\left\{
\begin{array}{llll}
\Delta{u}+f(u)=0  \quad&\mathrm{in}\quad ~~\Omega,\\
u>0  \quad&\mathrm{in}\quad ~~\Omega,\\
u=0  \quad&\mathrm{on}\quad \partial\Omega,\\
\langle\nabla{u},\vec{v}\rangle_{\mathbb H ^2}=\alpha \quad&\mathrm{on}\quad
\partial\Omega,
\end{array}
\right.
\end{eqnarray*}where $f:(0,+\infty)\rightarrow\mathbb{R}$ is a Lipschitz function, then $\Omega$ is either a geodesic ball or a horoball.

Furthermore, if $f:(0,+\infty)\rightarrow\mathbb{R}$ satisfies the property $\mathbb{P}_{1}$ mentioned in Lemma \ref{proposition1} for some constant $\lambda$ satisfying $\lambda>\frac{1}{4}$, then $\Omega$ must be a geodesic ball and $u$ is radially symmetric.
\end{theorem}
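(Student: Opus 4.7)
The plan is a case analysis on the boundary at infinity $\partial _{\infty} \Omega \subseteq \mathbb S^{1}$, invoking the symmetry theorems of Section~3. Since $\partial \Omega$ is a connected properly embedded $C^{2}$ curve in the simply connected surface $\mathbb H^{2}$, a Jordan-type argument gives that $\mathbb H^{2} \setminus \partial \Omega$ has exactly two connected components, namely $\Omega$ and $\mathbb H^{2} \setminus \overline{\Omega}$. Combined with the hypothesis that $\mathbb H^{2} \setminus \overline{\Omega}$ is connected, the same reasoning used in the proof of Theorem~\ref{ThBCNP2} forces $\partial _{\infty} \Omega$ to have at most one connected component in $\mathbb S^{1}$, so it must be one of: empty, a single point, a proper closed arc, or all of $\mathbb S^{1}$.

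If $\partial _{\infty} \Omega = \emptyset$, then $\Omega$ is bounded and Corollary~\ref{theorem3.3-1} (a specialization of Theorem~\ref{ThLR}) yields that $\Omega$ is a geodesic ball with $u$ radially symmetric. If $\partial _{\infty} \Omega$ reduces to a single point, then Theorem~\ref{theorem3.6-1} yields that $\Omega$ is a horoball and $u$ is horospherically symmetric. These two subcases already deliver the dichotomy of the first assertion, provided the arc and full-circle configurations can be ruled out. For the second, stronger assertion, under $\mathbb{P}_{1}$ with $\lambda > 1/4$, the exclusion is immediate: Lemma~\ref{horosphere} gives $\mathrm{int}(\partial _{\infty} \Omega) = \emptyset$, which rules out both the arc and full-circle cases; Lemma~\ref{proposition1} shows that $\mathbb{P}_{1}$ implies $\mathbb{P}_{2}$, and Corollary~\ref{CorBoundary} then rules out the horoball subcase. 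Thus only the geodesic ball possibility survives. Equivalently, one may invoke Theorem~\ref{ThBCNP2} directly, since $\mathbb{P}_{1} \Rightarrow \mathbb{P}_{2}$.

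The hard part lies in the first assertion, where one must exclude the arc and full-circle configurations of $\partial _{\infty} \Omega$ under only the Lipschitz assumption on $f$, because then Lemma~\ref{horosphere} is unavailable. The plan to close this gap is to adapt the moving plane scheme of Theorem~\ref{ThLR} by sliding one-parameter families of totally geodesic lines of $\mathbb H^{2}$ whose pairs of endpoints at infinity lie in the nonempty complementary arc $\mathbb S^{1}\setminus \partial _{\infty} \Omega$: starting from such a line disjoint from $\overline{\Omega}$ and sliding it until first contact, the Alexandrov reflection together with the Hopf boundary maximum principle and the corner lemma employed in the proof of Theorem~\ref{ThLR} would force $\Omega$ to be reflection-symmetric across a whole family of such lines; passing to the group generated by the associated reflections, as at the end of the proof of Theorem~\ref{ThLR}, would collapse $\partial _{\infty} \Omega$ onto at most one point, contradicting the assumed nondegeneracy of the arc. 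The full-circle case is the most delicate: here one needs to exploit directly the existence of arbitrarily large balls and horoballs inside $\Omega$, together with the Lipschitz structure of $f$, to derive a contradiction without recourse to $\mathbb{P}_{1}$. This moving-plane-at-infinity step, where the hyperbolic structure is decisive and has no Euclidean analogue, is the principal technical obstacle of the argument.
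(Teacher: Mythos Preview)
Your treatment of the second assertion (under $\mathbb{P}_1$) is exactly the paper's approach: the paper deduces Theorem~\ref{BCNConjecture} from Theorem~\ref{ThBCNP2} by noting that $\mathbb{P}_1$ forces $\mathbb{P}_2$ via Lemma~\ref{proposition1}, and the proof of Theorem~\ref{ThBCNP2} is precisely the case analysis on $\partial_\infty\Omega$ you describe (empty $\Rightarrow$ geodesic ball by Theorem~\ref{ThLR}; one component $\Rightarrow$ single point by Lemma~\ref{horosphere} $\Rightarrow$ horoball by Theorem~\ref{theorem3.6-1} $\Rightarrow$ contradiction by Theorem~\ref{ThBoundary}). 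On this half you and the paper coincide.

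For the first assertion (general Lipschitz $f$) the paper gives no separate argument: the short paragraph preceding the theorem only treats the $\mathbb{P}_1$ case, and the introduction's preview of Theorem~\ref{BCNConjecture} states only the $\mathbb{P}_1$ conclusion. Your attempt to supply the missing argument has a genuine gap. In the arc case, the moving-plane scheme from Theorem~\ref{ThLR} hinges on $\partial_\infty P(t)\cap\partial_\infty\Omega=\emptyset$ for $t\neq 0$, so that the reflected caps $\Omega_t^-$ remain relatively compact. When $\partial_\infty\Omega$ is a nondegenerate closed arc, any one-parameter family of geodesics with endpoints in the complementary open arc can only be slid until one endpoint reaches $\partial_\infty\Omega$; compactness is lost before you obtain a reflection symmetry, and the reflections you can legitimately perform (all fixing the two endpoints of the arc) do not generate a group large enough to collapse $\partial_\infty\Omega$ to a point. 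For the full-circle case you have essentially no argument. Observe, moreover, that the paper's own ``BCN-Conjecture in $\mathbb{H}^2$'' in the concluding remarks of Section~4 explicitly lists half-spaces bounded by geodesics or equidistant curves among the expected $f$-extremal domains for general Lipschitz $f$; these have $\partial_\infty\Omega$ a nondegenerate arc, which is in direct tension with the first assertion as stated. Without $\mathbb{P}_1$ (or at least $\mathbb{P}_2$) the narrowness needed to exclude such configurations is simply unavailable, and you should not expect to close this gap by refining the Alexandrov technique alone.
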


\subsection{Cylindrically boundedness}

When we are dealing with $f$-extremal domain in $\mathbb H ^2$, the graphical properties developed in Subsection 3.3 will imply the cylindrically boundedness of ends of $\Omega$ that are topologically a half strip $[0,1]\times (0,+\infty)$. An end $E\subset \Omega $ is topologically a half-strip if there exists a compact set $K \subset \mathbb H^2$ such that $E$ is a connected component of $\Omega \setminus K $ and there exists a homeomorphism $h : [0,1]\times (0,+\infty) \to E$.

\begin{remark}
This is the counterpart in OEP of being a properly embbeded annulus for CMC hypersurfaces.
\end{remark}

By using a similar method to that in the proof of \cite[Lemma 6.1]{rs}, which follows geometric ideas in \cite{egr}, we can bound the maximum distance that a bounded connected component $C \subset \Omega \cap \mathrm{ext}_{\mathbb{H}^{2}}(\delta)$ can attain to $\delta $. Specifically,

\begin{lemma} \label{lemma4.1}
Let $\Omega$ be a $f$-extremal domain of the OEP (\ref{1.4}) satisfying the property $\mathbb{P}_{2}$.

Let $C$ be a bounded connected component of $\Omega\cap\mathrm{ext}_{\mathbb{H}^{2}}(\delta)$, and let $h(C)$ be the maximal distance of $\partial{C}$ to $\delta$ in the sense of the hyperbolic metric $g_{-1}$. Then we have $h(C)\leqslant3R$.
\end{lemma}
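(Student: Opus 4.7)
The argument follows the Ros--Sicbaldi strategy (cf.\ \cite[Lemma 6.1]{rs}) adapted to the hyperbolic setting, proceeding by contradiction. Assume $h := h(C) > 3R$, pick $p_0 \in \overline{C}$ realizing $d(p_0,\delta) = h$, and consider the foliation $\{\delta_s\}_{s \in \mathbb{R}}$ of $\mathbb{H}^{2}$ by equidistants to $\delta$, where $\delta_s$ lies at (signed) hyperbolic distance $s$ from $\delta$ on the side containing $C$ (so $\delta_0 = \delta$ and $\delta_h$ passes through $p_0$). Write $\mathcal{R}_s$ for the reflection through $\delta_s$, set $C_s^+ := C\cap \{d(\cdot,\delta)>s\}$, and let $\tilde C_s^+ := \mathcal{R}_s(C_s^+)$.

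The plan is first to run the Alexandrov moving-equidistants argument across $\{\delta_s\}$, decreasing $s$ from $h$. By Lemma \ref{lemmaARF} the reflected function $u \circ \mathcal{R}_s$ satisfies the same PDE as $u$, so invoking the Hopf maximum principle together with Serrin's corner lemma exactly as in the proofs of Theorem \ref{ThLR} and Theorem \ref{theorem3.5-1}, the inclusion $\tilde C_s^+ \subseteq \overline{C}$ persists as $s$ decreases, \emph{unless} $C$ becomes symmetric across some $\delta_{s_*}$ with $s_* > 0$. Such a symmetry would force $\partial C \cap \delta_{2 s_*} \neq \emptyset$ (as the reflection across $\delta_{s_*}$ of a portion of $\partial C \cap \delta$), which is incompatible with the graphicality of $\partial C \cap \mathrm{ext}_{\mathbb{H}^{2}}(\delta)$ over $\partial C \cap \delta$ given by Theorem \ref{theorem3.5-1} together with the boundedness of $C$. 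Consequently $\tilde C_s^+ \subseteq \overline{C}$ for every $s \in (0, h)$; in particular, reflecting the top cap through $\delta_{h/2}$ sends it into $\overline C$.

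Combining this with Corollary \ref{corollary3.12-1}, which places the reflection $C'$ of $C$ across $\delta$ inside $\overline{\Omega}$, the set $\overline{C \cup C'} \subseteq \overline{\Omega}$ is symmetric about $\delta$. Since $h > 3R$, the graphical top cap of $C$ rises above $\delta_{2R}$, and the Alexandrov symmetry that has just been established, combined with the symmetry across $\delta$, supplies enough ``body'' of $C \cup C'$ in a band of hyperbolic width $\geq 2R$ straddling $\delta$ to inscribe a closed geodesic half-ball of radius $R$ inside $\overline{C}$ centered at some point of $\partial C \cap \delta$. This contradicts Corollary \ref{corollary3.13-1} (equivalently property $\mathbb{P}_{2}$), completing the proof. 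The main obstacle will be this last inscribing step: in $\mathbb{H}^{2}$ one cannot transplant Euclidean inscribed-ball estimates verbatim, so care is needed in tracking how the graphicality of $\partial C$, the Alexandrov ``downward'' reflection inside $\overline{C}$, and the $\delta$-reflection together saturate a neighborhood of the geodesic segment from $p_1 := \delta \cap \overline{\gamma}$ (where $\gamma$ is the geodesic from $p_0$ orthogonal to $\delta$) into which a geodesic ball of radius $R$ must fit.
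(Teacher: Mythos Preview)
There is a genuine gap in your choice of reflecting family. You foliate by the \emph{equidistants} $\delta_s=\{d(\cdot,\delta)=s\}$ and apply Alexandrov reflection through each $\delta_s$. In $\mathbb{H}^{2}$, however, an equidistant to a geodesic is \emph{not} a geodesic (it has constant geodesic curvature $\tanh s\in(0,1)$), so the map $\mathcal R_s$ you invoke is not an isometry of $\mathbb{H}^{2}$. Lemma~\ref{lemmaARF} proves invariance of the PDE only under reflections through totally geodesic hyperplanes, and the whole moving-plane machinery of Theorem~\ref{ThLR} and Theorem~\ref{theorem3.5-1} rests on that. Hence the step ``by Lemma~\ref{lemmaARF} the reflected function $u\circ\mathcal R_s$ satisfies the same PDE'' fails as written, and with it the maximum-principle comparison that drives the argument.

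The paper does not give a detailed proof either; it simply says the hyperbolic case mimics \cite[Lemma~6.1]{rs} using the reflection technique of Theorem~\ref{ThLR}. Concretely, that means replacing your equidistants by the one-parameter family of \emph{geodesics} $P(t)=\mathcal T^\beta_t(\delta)$ obtained by hyperbolic translation of $\delta$ along a geodesic $\beta$ orthogonal to $\delta$, exactly as in Theorem~\ref{theorem3.5-1}. With this correct family the reflections are isometries and your Steps~(1)--(3) go through, but the parameter $t$ no longer equals $d(\cdot,\delta)$ away from $\beta$, so the link between the moving-plane position and the quantity $h(C)$---and in particular the final inscribing of a half-ball of radius $R$ centered on $\partial C\cap\delta$---requires an extra distance-comparison argument that your proposal does not supply (and that you yourself flag as the main obstacle).
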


The proof is a clever use of the reflection technique and using the condition that there is no ball of a certain radius inside. The proof in the hyperbolic case mimic that of the Euclidean case, with the obvious use of the reflection technique developed in Theorem \ref{ThLR}.

Moreover, this Lemma \ref{lemma4.1} is not that fundamental in the hyperbolic setting. It will implies that

\begin{lemma}\label{LemCylin}
Let $\Omega$ be a $f$-extremal domain of the OEP (\ref{1.4}) satisfying the property $\mathbb{P}_{2}$.

Any end $E$ of $\Omega$ which is topologically a half-strip must be cylindrical bounded. That is, there exists a geodesic $\gamma $ and a uniform constant $C$ (depending only on $R$ and $\partial E \cap K$), such that
$$ d(p ,\gamma ) \leq C \text{ for all } p \in E ,$$here $d$ is the distance w.r.t. the hyperbolic metric $g_{-1}$.
\end{lemma}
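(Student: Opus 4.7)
The strategy is to combine Lemma \ref{lemma4.1} with the midpoint construction used in the proof of Theorem \ref{ThLR}, reducing cylindrical boundedness to a no-ball argument. I would proceed in three steps: first, identify $\partial_{\infty} E$ using the graphical properties of Subsection 3.3; second, select a candidate axis geodesic $\gamma$; third, use Lemma \ref{lemma4.1} together with an ultraparallel-geodesic construction to produce a uniform bound on $d(\cdot,\gamma)$ over $E$.

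For the first step, $E$ is bounded in $\Omega$ by two properly embedded $C^{2}$ arcs $C_{0},C_{1}\subset\partial\Omega$ that leave every compact set. Combining the graphical characterization of Theorem \ref{theorem3.5-1} with Corollaries \ref{corollary3.11-1} and \ref{corollary3.13-1}, together with property $\mathbb{P}_{2}$, I would argue that each $C_{i}$ has exactly one asymptotic endpoint $x_{i}\in\mathbb{S}^{1}=\partial_{\infty}\mathbb{H}^{2}$ and that every interior sequence of $E$ accumulates only on $\{x_{0},x_{1}\}$, so that $\partial_{\infty}E$ consists of one or two points. For the second step I choose $\gamma$ to be the geodesic joining $x_{0}$ and $x_{1}$ when they are distinct, or any geodesic with $x_{0}=x_{1}$ as an endpoint otherwise. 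In either case $\partial_{\infty}E\subseteq\partial_{\infty}\gamma$.

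The third step is the heart of the argument. Suppose by contradiction that some $p\in E$ has $d(p,\gamma)=D>6R$. Let $q\in\gamma$ be the foot of the perpendicular from $p$ onto $\gamma$, let $r$ be the midpoint of the hyperbolic segment $\overline{pq}$, and let $\delta$ be the geodesic through $r$ orthogonal to $\overline{pq}$. Because $\gamma$ and $\delta$ are both perpendicular to $\overline{pq}$ at the distinct points $q$ and $r$, they are ultraparallel, and the open half-plane $H^{+}$ bounded by $\delta$ not containing $\gamma$ satisfies $p\in H^{+}$ and $d(p,\delta)=D/2>3R$. A direct asymptotic analysis (using that both points of $\partial_{\infty}\delta$ lie in the arc of $\mathbb{S}^{1}\setminus\partial_{\infty}\gamma$ determined by the side of $\gamma$ containing $p$) shows that $\partial_{\infty}H^{+}$ is disjoint from $\partial_{\infty}\gamma$ and hence from $\partial_{\infty}E$. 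Therefore the connected component $C$ of $\Omega\cap H^{+}$ containing $p$ satisfies $\partial_{\infty}C=\emptyset$ and is relatively compact in $\mathbb{H}^{2}$. Lemma \ref{lemma4.1}, applied to $C$ with $\delta$ in the role of the totally-geodesic hyperplane, gives $h(C)\le 3R$, contradicting $d(p,\delta)>3R$. Hence $d(\cdot,\gamma)\le 6R$ on $E$, which is the desired cylindrical boundedness.

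The main obstacle I expect is the first step---pinning down $\partial_{\infty}E$ to at most two points---together with the bookkeeping needed in the third step to guarantee that the component $C$ of $\Omega\cap H^{+}$ containing $p$ is genuinely contained in the end $E$ rather than in an unrelated part of $\Omega$. Both issues rely on $\mathbb{P}_{2}$ ruling out the accumulation of thick pieces of $\Omega$ near infinity, and one must enlarge the base compact set $K$ defining $E$ sufficiently so that, for all $p\in E$ with $d(p,\gamma)>6R$, the midpoint construction stays inside $E$.
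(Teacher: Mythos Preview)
The paper does not actually give a proof of Lemma~\ref{LemCylin}: it simply states the lemma immediately after Lemma~\ref{lemma4.1} with the sentence ``Moreover, this Lemma~\ref{lemma4.1} is not that fundamental in the hyperbolic setting. It will implies that'', leaving the derivation implicit and referring back to the Euclidean analogue in~\cite{rs}. Your proposal is therefore not competing with a written argument but rather supplying the details the authors omit, and the route you take---apply Lemma~\ref{lemma4.1} to a suitably chosen totally geodesic line $\delta$ cutting off a bounded cap far from the axis $\gamma$---is precisely the mechanism the paper has in mind.

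Two remarks on your sketch. First, your bound $6R$ is independent of $\partial E\cap K$, whereas the paper explicitly allows the constant $C$ to depend on $\partial E\cap K$; this discrepancy arises exactly at the point you flag yourself, namely in guaranteeing that the component of $\Omega\cap H^{+}$ containing $p$ is bounded. Lemma~\ref{lemma4.1} is stated for bounded components of $\Omega\cap\mathrm{ext}_{\mathbb H^{2}}(\delta)$, not of $E\cap\mathrm{ext}_{\mathbb H^{2}}(\delta)$, so if $\Omega$ has other ends whose asymptotic points lie in $\partial_{\infty}H^{+}$, your cap need not be relatively compact. The fix is the one you suggest: go far enough into $E$ (past a compact depending on $K$ and the geometry of $\partial E\cap K$) so that any totally geodesic $\delta$ produced by the midpoint construction separates $E$ from the rest of $\Omega$, which restores boundedness at the cost of the extra dependence in $C$. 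Second, your Step~1 is more delicate than you indicate: the graphical results of Subsection~3.3 concern \emph{bounded} components of $\Omega\cap\mathrm{ext}_{\mathbb H^{2}}(P)$, so they do not directly control $\partial_{\infty}E$; one really needs property~$\mathbb P_{2}$ (via Lemma~\ref{horosphere} or Theorem~\ref{ThBoundary}) to rule out accumulation on an arc of $\mathbb S^{1}$ and force $\partial_{\infty}E$ to be a single point, which is also how the paper reformulates the lemma as Lemma~\ref{LemE}.
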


Another way to see Lemma \ref{LemCylin} is saying that,

\begin{lemma}\label{LemE}
Let $\Omega$ be a $f$-extremal domain of the OEP (\ref{1.4}) satisfying the property $\mathbb{P}_{2}$. The boundary at infinity of an end $E$ which is topologically a half-strip must be a single point and, moreover, such a point at infinity must be a conical point of radius $r$ uniformly bounded by $R$ and $\partial E \cap K$.
\end{lemma}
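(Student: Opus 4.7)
The plan is to deduce Lemma \ref{LemE} from Lemma \ref{LemCylin}, combined with the topology of a half-strip and the graphical properties already established in Theorem \ref{theorem3.5-1} and Corollaries \ref{corollary3.11-1}--\ref{corollary3.13-1}. First I would invoke Lemma \ref{LemCylin} to place $E$ inside a tubular neighborhood $T_C(\gamma):=\{p\in \mathbb H^2 : d(p,\gamma)\leqslant C\}$ of a complete geodesic $\gamma$, with $C=C(R,\partial E\cap K)$. Since the boundary at infinity of $T_C(\gamma)$ consists exactly of the two endpoints $\gamma^+,\gamma^-\in \partial_\infty \mathbb H^2$, we have $\partial_\infty E \subseteq \{\gamma^+,\gamma^-\}$. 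On the other hand, $E$ is homeomorphic to $[0,1]\times[0,+\infty)$, so it has a unique topological end. If $\partial_\infty E$ contained both $\gamma^+$ and $\gamma^-$, the proper embedding would force $E$ to be unbounded in both directions along $\gamma$, producing at least two distinct ends and contradicting the half-strip topology. Since $E$ is non-compact, $\partial_\infty E$ is nonempty, so it reduces to a single point $x$, which after relabeling we may take to be $\gamma^+$.

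To exhibit $x$ as a conical point, I would study the two non-compact boundary arcs $c_1,c_2\subset \partial E\cap T_C(\gamma)$, both accumulating at $x$. Slicing by the one-parameter family of hyperlines $\{P(s)\}_{s\in \mathbb R}$ orthogonal to $\gamma$, I would use Theorem \ref{theorem3.5-1} (graphicality of $\partial C\cap \mathrm{ext}(P)$ over $P$) together with Corollary \ref{corollary3.11-1} (no orthogonal tangency of $\partial E$ with $P(s)$) to see that for $s$ large enough each slice $P(s)\cap E$ is a compact arc of hyperbolic length at most $2C$. Corollary \ref{corollary3.13-1} — which combined with property $\mathbb{P}_2$ forbids a half-ball of radius $R$ inside $E$ centered on $P(s)$ — then prevents the two arcs $c_1,c_2$ from being asymptotically squeezed onto the same geodesic ray at $x$. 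This yields a uniform positive lower bound $r=r(R,\partial E\cap K)>0$ on the transverse width of $E$ along some geodesic ray $\gamma'|_{[s_0,+\infty)}$ with $\gamma'^{+}=x$, so that $\mathcal{C}_{x}(\gamma'^{-},r,s_0)\subseteq E\subseteq \Omega$, as required, and obviously $r\leqslant C$ is controlled by $R$ and $\partial E\cap K$.

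The main obstacle is the last quantitative step: promoting the qualitative statement ``$c_1$ and $c_2$ stay apart near infinity'' to a uniform lower bound $r>0$ depending only on $R$ and the compact piece $\partial E\cap K$. This is where one must carefully combine the no-small-ball property $\mathbb{P}_2$, the absence of orthogonal corners (Corollary \ref{corollary3.11-1}), and the graphical structure of $\partial E$ (Theorem \ref{theorem3.5-1}) to rule out the degenerate scenario in which $c_1$ and $c_2$ both collapse onto the same asymptotic geodesic ray at $x$. In the Euclidean analog this is essentially the Korevaar--Kusner--Meeks--Solomon argument adapted in \cite{rs,egr}; in $\mathbb H^2$, the exponential divergence of geodesic rays inside the tube $T_C(\gamma)$ makes the precise control of the transverse width more delicate, and this is where the $C^2$ properly embedded hypothesis on $\partial\Omega$ must be used decisively.
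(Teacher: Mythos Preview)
The paper does not give a separate proof of Lemma \ref{LemE}; it presents the lemma as ``another way to see Lemma \ref{LemCylin}'' and adds only the remark that in the hyperbolic setting Theorem \ref{ThBoundary} already supplies the relevant control on $\partial_\infty\Omega$. So the paper's entire argument is precisely your steps 1--2: Lemma \ref{LemCylin} places $E$ in a tube $T_C(\gamma)$, forcing $\partial_\infty E\subseteq\{\gamma^+,\gamma^-\}$, and the half-strip topology cuts this down to a single point. That part of your proposal is correct and coincides with the paper.

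Where you diverge is in your reading of ``conical point of radius $r$ uniformly bounded by $R$ and $\partial E\cap K$''. You interpret this as requiring a \emph{lower} bound $r>0$ on the radius of a half-tube contained in $E$, and then flag this as the main obstacle. The paper, by contrast, treats Lemma \ref{LemE} as a pure restatement of Lemma \ref{LemCylin}, so the intended content is only the \emph{upper} bound: since $E\subset T_C(\gamma)$ and near infinity $E$ coincides with $\Omega$, any cone $\mathcal{C}_x(\cdot,r,\cdot)\subset\Omega$ at $x$ has $r\leqslant C$, with $C=C(R,\partial E\cap K)$. Under this reading your step 3 is unnecessary and the lemma is immediate from Lemma \ref{LemCylin}, exactly as the paper asserts.

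If one insists on your stronger reading (existence of a half-tube of definite positive radius inside $E$), then be aware that your proposed attack on the obstacle is misdirected: property $\mathbb{P}_2$, Corollary \ref{corollary3.13-1}, and the no-half-ball conclusion all produce \emph{upper} bounds on the transverse width of $E$, never lower bounds. They cannot rule out $c_1$ and $c_2$ collapsing onto the same geodesic ray at $x$, so the difficulty you isolate would remain open with those tools alone. The paper neither claims nor proves such a lower bound.
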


This Lemma \ref{LemE} is fundamental in the Euclidean case (cf. \cite{rs}). However, in the Hyperbolic setting we already have Theorem \ref{ThBoundary} which implies Lemma \ref{LemE}. As we pointed out, the Hyperbolic geometry imposses more restrictions than the Euclidean geometry. Nevertheless, we think it is important to address these properties for future applications.

\subsection{Concluding remarks}

In dimension $2$ we think it must hold:

\begin{quote}
{\bf BCN-Conjecture in $\mathbb H ^2$: } {\it Let $\Omega \subset \mathbb H ^2$ be a domain with properly embedded $C^2$ boundary and such that $\mathbb H ^2 \setminus \overline \Omega $ is connected. If there exists a (strictly) positive bounded function $u\in{C}^{2}(\Omega)$ that solves the equation
\begin{eqnarray*}
\left\{
\begin{array}{llll}
\Delta{u}+f(u)=0  \quad&\mathrm{in}\quad ~~\Omega,\\
u>0  \quad&\mathrm{in}\quad ~~\Omega,\\
u=0  \quad&\mathrm{on}\quad \partial\Omega,\\
\langle\nabla{u},\vec{v}\rangle_{\mathbb H ^2}=\alpha \quad&\mathrm{on}\quad
\partial\Omega,
\end{array}
\right.
\end{eqnarray*}where $f:(0,+\infty)\rightarrow\mathbb{R}$ is a Lipschitz function, then $\Omega$ must be either
\begin{itemize}
\item a geodesic ball or,
\item a horoball or,
\item a half-space determined by a complete geodesic or,
\item a half-space determined by a complete equidistant curve, i.e., a complete curve of constant geodesic curvature $k_g \in (0,1)$, or,
\item the complement of one of the above examples.
\end{itemize}}
\end{quote}

\section{A height estimate}   \label{section4}
\renewcommand{\thesection}{\arabic{section}}
\renewcommand{\theequation}{\thesection.\arabic{equation}}
\setcounter{equation}{0} \setcounter{maintheorem}{0}

From now on in this section, we will focus on the two dimensional case, $\Omega \subset \mathbb H^2$. Let $\Omega$ be an unbounded open connected domain in
$\mathbb{H}^{2}$, with a boundary of class $C^{2}$, on which the OEP
(\ref{1.4}) has a solution $u\in{C}^{2}(\overline{\Omega})$. Let
$R_{\lambda,n}$, determined by (\ref{2.2}) with $n=2$, be the radius
of the geodesic ball $B_{\mathbb{H}^{2}}(p,R_{\lambda,n})$ on which
the first Dirichlet eigenvalue of the Laplacian is $\lambda$ (i.e.,
$\lambda_{1}\left(R_{\lambda,n}\right)=\lambda$), and let $v$ be a
chosen eigenfunction of $\lambda_{1}\left(R_{\lambda,n}\right)$ such
that
\begin{eqnarray*}
\langle\nabla{v},\vec{v}\rangle_{\mathbb{H}^{2}}=\alpha.
\end{eqnarray*}
For this moment, we assume that $\alpha\neq0$. Set
\begin{eqnarray*}
h_{0}:=\max\limits_{B_{\mathbb{H}^{2}}(p,R_{\lambda,n})}v=v(p).
\end{eqnarray*}
Clearly, $h_{0}$ depends on $\alpha$ and $\lambda$. The last
equality in the above expression holds since $v$ is a radial
function and is decreasing along the radial direction.

By applying a similar method to that in the proof of \cite[Propostion 5.1]{rs} that follows geometric ideas developed in \cite{egr}. We can prove the following.

\begin{proposition}  \label{proposition3.15-1}
Assume that an unbounded open connected $f$-extremal domain $\Omega$
of the OEP (\ref{1.4}) satisfies the property $\mathbb{P}_{2}$
mentioned in Corollary \ref{corollary3.13-1} (here for coherence of
the usage of notations in the argument of this subsection, we use
$R_{\lambda,n}$ to replace $R$ in the property $\mathbb{P}_{2}$).
Let $\Omega'$ be a connected component of
\begin{eqnarray*}
\left\{x\in\Omega|u(x)>h_{0}\right\}.
\end{eqnarray*}
Then the diameter of $\Omega'$ is smaller than $2R_{\lambda,n}$.
\end{proposition}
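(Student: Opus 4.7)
I would argue by contradiction, adapting the geometric strategy of Ros--Sicbaldi \cite[Proposition 5.1]{rs} (itself based on \cite{egr}) to the hyperbolic setting. Suppose $\mathrm{diam}(\Omega') \geq 2R_{\lambda,n}$. The objective is to exhibit a closed geodesic ball of radius $R_{\lambda,n}$ contained in $\overline{\Omega}$, thereby contradicting property $\mathbb{P}_{2}$ (Corollary \ref{corollary3.13-1}).

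Since $u > h_0 > 0$ on $\Omega'$ and $u = 0$ on $\partial\Omega$, one has $\overline{\Omega'}\subset\Omega$. By compactness, choose $p_1, p_2\in\overline{\Omega'}$ with $d(p_1,p_2)\geq 2R_{\lambda,n}$. Let $\gamma$ be the unique minimising geodesic from $p_1$ to $p_2$, and $m=\gamma(R_{\lambda,n})$ its midpoint, so that $\{p_1,p_2\}\subset\partial B(m,R_{\lambda,n})$. On $B(m,R_{\lambda,n})$ I place the radial eigenfunction $v_m$, i.e., the translate of $v$ centred at $m$; it satisfies $v_m(m)=h_0$, $v_m\equiv 0$ and $\langle\nabla v_m,\vec\nu\rangle_{\mathbb{H}^{2}}=\alpha$ on $\partial B(m,R_{\lambda,n})$, matching the boundary data of $u$ on $\partial\Omega$.

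The heart of the proof is a sliding comparison between $u$ and $v_m$ on the overlap $\Omega\cap B(m,R_{\lambda,n})$, in the same spirit as Lemma \ref{proposition1} and the moving plane analysis of Theorem \ref{ThLR}. Starting from $v_m-c$ (or $\varepsilon v_m$) well below $u$, one slides the parameter to the first contact value. As in the proof of Theorem \ref{ThLR}, the difference $w=u-v_m$ linearises to a second-order uniformly elliptic equation (cf.\ \cite{gt}), so the usual tools apply: the strong interior maximum principle rules out a first contact at an interior point of $\Omega\cap B(m,R_{\lambda,n})$; the Hopf boundary lemma, combined with the fact that $u$ and $v_m$ share both Dirichlet datum $0$ and Neumann datum $\alpha$ along any common boundary piece, rules out a first tangential contact along $\partial\Omega\cap B(m,R_{\lambda,n})$; and Serrin's corner lemma \cite[Lemma 2]{s}, applied exactly as in Theorem \ref{ThLR}, rules out first contact at a right-angled corner where $\partial\Omega$ meets $\partial B(m,R_{\lambda,n})$ orthogonally. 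With every contact scenario excluded, $\partial\Omega$ cannot enter the open ball $B(m,R_{\lambda,n})$, forcing $\overline{B(m,R_{\lambda,n})}\subset\overline{\Omega}$, in contradiction with $\mathbb{P}_{2}$.

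The main obstacle is to run the sliding comparison rigorously between the fully nonlinear operator of \eqref{1.4} acting on $u$ and the linear eigenvalue operator acting on $v_m$, without a direct hypothesis such as $\mathbb{P}_{1}$ on $f$. The key is the same linearisation trick used in Theorem \ref{ThLR}: the difference $w=u-v_m$ satisfies a linear uniformly elliptic inequality whose coefficients are controlled by the ellipticity constants $\Lambda_1,\Lambda_2$ of \eqref{1.4} and by intermediate values of $a_i$ and $\partial f$, so that the classical strong maximum principle, Hopf lemma and corner lemma \cite{gt,s} all apply --- the matching Neumann datum $\alpha$ being what drives the final contradiction, just as it does in the moving-plane proof of Theorem \ref{ThLR}.
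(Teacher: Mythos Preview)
Your strategy diverges from the paper's in an essential way, and it contains a genuine gap.

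The paper does \emph{not} fix the ball at a midpoint and slide $v_m$ vertically in order to trap $\overline{B(m,R_{\lambda,n})}$ inside $\overline{\Omega}$. Instead it performs a \emph{horizontal} sliding of the cap. One joins the two far-apart points $q_1,q_2\in\overline{\Omega'}$ by a curve $\ell\subset\overline{\Omega'}$ (on which $u\geq h_0$), takes the geodesic $L_2$ orthogonal to the geodesic segment $\widehat{q_1q_2}$ at its midpoint $m$, places the center $p$ of $B(p,R_{\lambda,n})$ far out on $L_2$, and moves $p$ along $L_2$ towards $\Omega'$. The curve $\ell$ is the barrier: since $v_p\leq h_0<u$ on $\ell$ and $u=0$ on $\partial\Omega$, a first tangential contact between the graph of $v_p$ and the graph of $u$ must occur before the ball crosses $\ell$, and the contradiction is obtained directly from the (interior or boundary) Hopf maximum principle at that contact point --- not from~$\mathbb{P}_2$.

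The decisive error in your argument is the linearisation step. In Theorem~\ref{ThLR} the difference $w_t=u-v_t$ obeys a linear uniformly elliptic equation only because $u$ and $v_t=u\circ\mathcal{R}_t$ are \emph{both} solutions of the \emph{same} quasilinear operator $Q$ (this is exactly what Lemma~\ref{lemmaARF} establishes); writing $Qu-Qv_t=0$ and applying the mean value theorem then yields \eqref{EqAux}. In your setting, $u$ solves the quasilinear equation in \eqref{1.4} while $v_m$ solves $\Delta v_m+\lambda v_m=0$, and under hypothesis $\mathbb{P}_2$ alone there is no relation whatsoever between $Q$ and this eigenvalue problem. You therefore cannot conclude that $u-v_m$ (or $u-(v_m-c)$) satisfies any linear elliptic equation or inequality, and none of the strong maximum principle, the Hopf lemma, or Serrin's corner lemma applies. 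A related slip: on $\partial\Omega\cap B(m,R_{\lambda,n})$ the function $v_m$ is strictly positive and has no prescribed normal derivative, so the claim that $u$ and $v_m$ ``share both Dirichlet datum $0$ and Neumann datum $\alpha$ along any common boundary piece'' is simply false there; the matching Neumann data occur only on $\partial\Omega$ for $u$ and on $\partial B(m,R_{\lambda,n})$ for $v_m$, which are in general disjoint sets.
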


\begin{proof}
Suppose first that $\Omega'$ is bounded. Let $d$ be the diameter of
$\Omega'$, and suppose that $d\geqslant2R_{\lambda,n}$. As we have
pointed out in Subsection 3.1, $\mathbb{H}^{2}\times\mathbb{R}$ can
be represented by
$\{(\xi_{1},\xi_{2},t)\in\mathbb{R}^{3}|\xi_{1}^{2}+\xi_{2}^{2}<1\}$
with the metric $\langle\cdot,\cdot\rangle:=g_{-1}+dt^{2}$, and the
one-to-one correspondence between $\mathbb{H}^{2}$ and $\mathbb{D}$
is given by a stereographic projection $\mathcal{S}$. Clearly,
$\mathcal{S}$ maps a bounded domain on $\mathbb{H}^{2}$ into a
bounded domain contained in $\mathbb{D}$ without intersecting
$\mathbb{S}_{\infty}^{1}$. Since $\Omega$ is unbounded, the image of
$\Omega$ under the mapping $\mathcal{S}$, which by the abuse of
notations we also denote by $\Omega$, must have at least one
boundary point $q^{\ast}$ at infinity, that is,
$q^{\ast}\in\mathbb{S}_{\infty}^{1}\cap\Omega$. Let $q_{1}$, $q_{2}$
be two points in $\overline{\Omega'}$ such that the hyperbolic
distance between $q_{1}$ and $q_{2}$ is $d$, and $\ell$ a curve in
$\overline{\Omega'}$ joining $q_{1}$ and $q_{2}$ (of course, if
$\Omega$ is regular, $\ell$ can be taken in its boundary). Since the
hyperbolic distance between $q_{1}$ and $q_{2}$ is $d$, there exists
a complete geodesic $L_{1}$ passing through $q_{1}$ and $q_{2}$, and
the part of $L_{1}$ connecting $q_{1}$ and $q_{2}$ is contained in
$\overline{\Omega'}$ completely, and we denote this part by
$\widehat{q_{1}q_{2}}$. Clearly, the length of
$\widehat{q_{1}q_{2}}$ is $d$. Let $m$ be the midpoint of the curve
$\widehat{q_{1}q_{2}}$, and let $L_{2}$ be the complete geodesic
passing through $m$ and orthogonal to $L_{1}$.  Set
$\Gamma=(L_{1}\backslash\widehat{q_{1}q_{2}})\cup\ell$. Clearly,
$\Gamma$ divides $\mathbb{D}\backslash\Gamma$ into two connected
components, and we denote them by $H_{1}$ and $H_{2}$ respectively.
Let $p\in{L}_{2}\cap{H_{2}}$ be a point very far way from $\Omega'$
in the sense of the hyperbolic metric $g_{-1}$. Now, consider the
graph $G$ of the eigenfunction $v$ defined on
$B_{\mathbb{H}^{2}}(p,R_{\lambda,n})$ by (\ref{2.2}) with the
Neumann boundary value
$\langle\nabla{v},\vec{v}\rangle_{\mathbb{H}^{2}}=\alpha$, and move
the point $p$ along the complete geodesic $L_{2}$ towards $\Omega'$.
Since the length of $\widehat{q_{1}q_{2}}$ is
$d\geqslant2R_{\lambda,n}$, $u(x)>h_{0}$ for $x\in\Omega'$, and
$u=0$ on $\partial\Omega$, there will exist a first contacting point
between the moved graph $G$ and the graph of $u$ over $\Omega$ at
some interior point of $\Omega$ or the boundary of $\Omega$. Both
situations are impossible by applying the Hopf maximum principle
(both the interior and the boundary versions). So, our assumption is
not true, which implies that $d<2R_{\lambda,n}$ for the case that
$\Omega'$ is bounded.

Suppose now that $\Omega'$ is unbounded, there exists a divergent
curve $\gamma(t)$, contained in $\Omega'$ with
$\lim\limits_{t\rightarrow-\infty}\gamma(t)=\lim\limits_{t\rightarrow+\infty}\gamma(t)=q^{\ast}$,
such that an arc $\ell\subset\gamma(t)$ satisfies the property that
the boundary points $q_{1}$, $q_{2}$ of $\ell$ have a hyperbolic
distance greater than and equal to $2R_{\lambda,n}$. Then one can
repeat the above argument to get a contradiction. This completes the
proof.
\end{proof}

\begin{remark}
\rm{By \cite[Remark 5.2]{rs} and the method in the proof of
Proposition \ref{proposition3.15-1}, it is easy to explain that the
Neumann boundary data $\alpha$ cannot vanish. }
\end{remark}

We can obtain the following result by applying Proposition \ref{proposition3.15-1} directly.

\begin{theorem}
Let $\Omega$ be an unbounded open connected $f$-extremal domain $\Omega$ of the OEP (\ref{1.3}), and let $\Omega'$ be a connected
component of
\begin{eqnarray*}
\left\{x\in\Omega|u(x)>h_{0}\right\}.
\end{eqnarray*}
If the function $f$ satisfies the property $\mathbb{P}_{1}$ mentioned in Lemma \ref{proposition1}, then the diameter of $\Omega'$ is smaller than $2R_{\lambda,n}$.
\end{theorem}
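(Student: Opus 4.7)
The plan is to derive this statement as a direct corollary of Proposition \ref{proposition3.15-1} by checking that its hypotheses are automatically met under the stated assumptions. Notice that the OEP (\ref{1.3}) is the special instance of the more general OEP (\ref{1.4}) obtained by taking $a_{i}(u,|\nabla u|)\equiv 1$ and letting $f$ depend only on $u$; in particular, the associated linear operator is the Laplacian $\Delta$, which is trivially uniformly elliptic. Hence any unbounded $f$-extremal domain of (\ref{1.3}) is, in particular, an $f$-extremal domain of (\ref{1.4}).

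The key point is then to verify property $\mathbb{P}_{2}$. Here I would invoke Lemma \ref{proposition1}: because $f:(0,+\infty)\to\mathbb R$ satisfies property $\mathbb{P}_{1}$ for some $\lambda>\frac{(n-1)^{2}k}{4}$ (with $n=2$ and $k=1$, so $\lambda>\frac{1}{4}$), the lemma guarantees that either $\overline{\Omega}$ does not contain any closed geodesic ball of radius $R_{\lambda,n}$, or else $\Omega$ is itself a geodesic ball of radius $R_{\lambda,n}$. The latter case is excluded by the assumption that $\Omega$ is unbounded. Therefore $\overline{\Omega}$ contains no closed geodesic ball of radius $R_{\lambda,n}$, which is precisely property $\mathbb{P}_{2}$ (with $R$ replaced by $R_{\lambda,n}$, as the statement already indicates).

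With $\mathbb{P}_{2}$ in hand, the hypotheses of Proposition \ref{proposition3.15-1} are fulfilled, so its conclusion applies verbatim to each connected component $\Omega'$ of $\{x\in\Omega\,:\,u(x)>h_{0}\}$, yielding $\mathrm{diam}(\Omega')<2R_{\lambda,n}$. There is no genuine obstacle here; the content of the theorem lies entirely in Lemma \ref{proposition1} and Proposition \ref{proposition3.15-1}, and the present statement only records the fact that the stronger structural hypothesis $\mathbb{P}_{1}$ on the nonlinearity $f$ of the semilinear OEP (\ref{1.3}) automatically produces the geometric non-inclusion property $\mathbb{P}_{2}$ needed to run the diameter estimate based on the moving-graph argument (sliding the graph of the first eigenfunction $v$ on $B_{\mathbb H^{2}}(p,R_{\lambda,n})$ along the geodesic orthogonal to the diameter chord and applying the Hopf maximum principle at the first contact point).
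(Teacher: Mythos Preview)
Your proposal is correct and follows exactly the approach the paper intends: the theorem is stated immediately after Proposition \ref{proposition3.15-1} as a direct consequence, and your argument spells out the (implicit) verification that property $\mathbb{P}_{1}$ on $f$ yields property $\mathbb{P}_{2}$ via Lemma \ref{proposition1}, after which Proposition \ref{proposition3.15-1} applies. The paper's own ``proof'' consists only of the phrase ``by applying Proposition \ref{proposition3.15-1} directly,'' so your write-up is in fact more detailed than the original.
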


\section*{Acknowledgments}
\renewcommand{\thesection}{\arabic{section}}
\renewcommand{\theequation}{\thesection.\arabic{equation}}
\setcounter{equation}{0} \setcounter{maintheorem}{0}

J. Mao would like to thank Dimas Percy Abanto Silva for many useful
discussions.  Also, he would like to thank IMPA for their support
during the preparation of this paper. J. Mao is partially supported
by CNPq-Brazil (Grant No. 150033/2014-1) and the NSF of China (Grant
No. 11401131).

The first author, Jos\'{e} M. Espinar, is partially supported by Spanish MEC-FEDER Grant MTM2013-43970-P; CNPq-Brazil Grants 405732/2013-9 and 14/2012 - Universal, Grant 302669/2011-6 - Produtividade;  FAPERJ Grant 25/2014 - Jovem Cientista de Nosso Estado.

 \end{document}